\documentclass[12pt]{amsart}
\usepackage{amssymb}
\usepackage{amsmath}
\usepackage[english]{babel}

\newtheorem{thm}{Theorem}

\newtheorem{lemma}{Lemma}

\newtheorem{prop}{Proposition}
\theoremstyle{definition}

\newtheorem{remk}{Remark}
\newtheorem{exam}{Example}
\newcommand{\set}[1]{\left\{#1\right\}}

\newcommand{\cX}{{\mathfrak{X}}}
\newcommand{\cA}{\mathcal{A}}
\newcommand{\cB}{{\mathcal{B}}}

\newcommand{\cD}{{\mathcal{D}}}
\newcommand{\cE}{{\mathcal{E}}}
\newcommand{\cJ}{{\mathcal{J}}}
\newcommand{\cC}{{\mathcal{C}}}
\newcommand{\cF}{{\mathcal{F}}}
\newcommand{\cH}{{\mathcal{H}}}

\newcommand{\cS}{{\mathcal{S}}}

\newcommand{\cY}{{\mathcal{Y}}}
\newcommand{\cW}{{\mathcal{W}}}
\newcommand{\cg}{{\mathfrak{g}}}

\newcommand{\dC}{{\mathbb{C}}}
\newcommand{\dN}{{\mathbb{N}}}
\newcommand{\dR}{{\mathbb{R}}}
\newcommand{\dZ}{{\mathbb{Z}}}
\newcommand{\gn}{\mathfrak{n}}
\newcommand{\gm}{\mathfrak{m}}
\newcommand{\gk}{\mathfrak{k}}
\newcommand{\gr}{\mathfrak{r}}

\newcommand{\gI}{\mathfrak{I}}
\newcommand{\gX}{\mathfrak{X}}

\newcommand{\Lin}{{\mathrm{Lin}}}

\topmargin = -2cm%
\textheight = 750pt%
\oddsidemargin = -5pt%
\evensidemargin = -5pt%
\textwidth = 17cm%
\title{Algebras of Fractions and Strict Positivstellens\"atze for $*$-Algebras}
\begin{document}
\author{Konrad Schm\"udgen}
\address{Universit\"at Leipzig, Mathematisches Institut, Johannisgasse 26, 04103 Leipzig, Germany}
\email{schmuedgen@math.uni-leipzig.de}



\keywords{Positivstellensatz, fractions, Weyl algebra, enveloping algebra, integrable  representations}

\maketitle
\begin{abstract}
In this paper we investigate a $*$-algebra $\cX$ of fractions
associated with a unital complex $*$-algebra $\cA$. The algebra $\cX$ and its Hilbert space representations are used to prove abstract noncommutative strict Positivstellens\"atze for $\cA$. Multi-grading of $\cA$ are studied as technical tools to verify the assumptions of this theorem.

As applications we obtain new strict Positivstellens\"atze for the Weyl algebra  and for the Lie algebra $\cg$ of the affine group of the real line. We characterize integrable representations of the Lie algebra $\cg$ in terms of resolvents of the generators and derive a new integrability criterion for representations of $\cg$.

\end{abstract}
\section{Introduction}
\evensidemargin = -5pt%
Positivstellens\"atze are fundamental results of real algebraic geometry \cite{PD}, \cite{M1}. They represent positive or nonnegative polynomials on semi-algebraic sets in terms of weighted sums of squares of polynomials.
Noncommutative strict Positivstellens\"atze have been proved for the Weyl algebra in \cite{S3} (see also \cite{C}) and for the enveloping algebra of a finite dimensional Lie algebra in \cite{S4}. The technical ingredients of these proofs are Hilbert space representations of certain algebras of fractions. Results of this kind can be considered as steps towards a noncommutative real algebraic geometry (see e.g. \cite{S5} and \cite{HP} for recent surveys).

In the present paper we investigate a fraction $*$-algebra $\cX$ associated with a unital $*$-algebra $\cA$. Our main aim is to develop a general method and technical tools for proving noncommutative strict Positivstellens\"atze of $\cA$ by means of the $*$-algebra $\cX$.

Throughout $\cA$ is a complex unital
$*$-algebra which has no zero-divisors and $\cS_O$ is a $*$-invariant left Ore set of $\cA$.
Further, $\cS$ is a unital $*$-invariant countable submonoid of $\cS_O$ and  $\gX $ is a unital $*$-subalgebra
of the fraction $*$-algebra $\cA\cS_O^{-1}$ such that $\cA\subseteq \cX\cS$, $\cX\subseteq \cA\cS^{-1}$ and $\cS^{-1}$ is a right Ore subset of $\cX$. Let $\cS_G$ denote a $*$-invariant set
of generators of $\cS$ and $\cX_s$ the quotient $*$-algebra of $\cX$ by the two-sided $*$-ideal generated by  $s\in\cS$.

Let us explain the contents of the paper. In Section 2 we show how a bounded $*$-representation $\rho$ of $\gX$ satisfying $\ker\rho(s^{-1})=\set{0}$
for  $s \in \cS$ gives rise to an (unbounded) $*$-representation $\pi_\rho$ of the $*$-algebra $\cA$. Despite of being
 essential for the results in Section 3, this construction seems to be  useful
in unbounded representation theory of $*$-algebras.
Representations of the form $\pi_\rho$ are
candidates for the definition of "well-behaved" unbounded representations of the $*$-algebra $\cA$
(see also Remark \ref{wellbeh} below). Section 3
contains three variants of an abstract strict Positivstellensatz for the $*$-algebra $\cA$. Our main abstract strict Positivstellensatz (Theorem \ref{abstpos2}) can be stated as follows. Assume that the $*$-algebra $\cX$ is algebraically bounded and the inner automorphisms $\alpha_s(\cdot)=s{\cdot}s^{-1}$, $s\in \cS$, leave $\cX$ invariant. Let $c$ be a hermitian element of $\cA$ and $t\in \cS$ such that $t^{-1}c(t^*)^{-1}$ is in $ \cX$. If the operators $\pi_\rho(c)$ and $\rho_s(t^{-1}c(t^*)^{-1})$ are strictly positive for all irreducible $*$-representations $\pi_\rho$ of $\cA$ and $\rho_s$ of $\gX_s$ for $s \in \cS_G$, then there exists an element $s \in \cS_O$ such that $scs^*$ is a finite sum of  hermitian squares in the $*$-algebra $\cA$. The fraction algebras and the denominator sets used in \cite{S3} and \cite{S4}  satisfy the assumptions of Theorem \ref{abstpos2}. In general it might be not easy to prove
that these assumptions are fulfilled.
In Section 4 we study multi-graded $*$-algebras and develop some conditions and results which are useful tools to verify the assumptions of Theorem \ref{abstpos2}.

The second group of results of this paper are two
strict Positivstellens\"atze proved in Sections 5 and 7.
The first one (Theorem \ref{pqpos}) is about the Weyl algebra $\cW(1)$ with denominator set $\cS_O{=}\cS$ generated by $\cS_G=\{p\pm \alpha i,q\pm\beta i\}$, where $\alpha$ and $\beta$ are fixed nonzero reals. The proof uses a result of Kato \cite{K2} about the integrability of the canonical commutation relation. The second application (Theorem \ref{abpos}) concerns the enveloping algebra $\cE(\cg)$ of the Lie algebra $\cg$ of the $ax+b$-group. Here the denominator set $\cS_O=\cS$ is generated by $\cS_G=\{i{\sf a}\pm(\alpha {+}n) i, i{\sf b}\pm\beta i;~ n\in\dZ \}$, where $\alpha$ and $\beta$ are reals such that $\alpha< -1$, $\beta \neq 0$ and $\alpha$ is not an integer and $\{ {\sf a},{\sf b}\}$ is a basis of $\cg$ satisfying the Lie relation $[\sf a,\sf b]=\sf b$. The results of Section 6 are essentially used in the proof of the Positivstellensatz in Section 7, but they are also of interest in itself. Section 6 contains a description of integrable representations of the Lie algeba $\cg$ in terms of a fraction algebra (Proposition \ref{dUAB} and Theorem \ref{axbresol}) and a new integrability criterion
(Theorem \ref{intdens}) which is the counterpart of Kato's theorem for representations of the Lie algebra $\cg$.

We close this introduction by  collecting some terminology on $*$-algebras and unbounded representations (see \cite{S1} for  a detailed treatment of this matter).
Suppose that $\cB$ is a unital $*$-algebra.
A \textit{$*$-representation} $\pi$ of $\cB$ on  a dense linear subspace $\cD(\pi)$ of a Hilbert space $\cH(\pi)$
is an algebra homomorphism of $\cB$ into
the algebra of linear operators mapping $\cD(\pi)$ into itself
such that $\pi (1)\varphi=\varphi$ and $\langle
\pi(b)\varphi,\psi\rangle =\langle \varphi,\pi(b^\ast)\psi\rangle$
for $\varphi,\psi\in\cD(\pi)$ and $b\in \cB$. Here $\langle\cdot,\cdot\rangle$ denotes the scalar product of $\cH(\pi)$. The \textit{graph topology} $t_\pi$ is the  locally convex topology on $\cD(\pi)$ defined by the seminorms $\varphi \to ||\pi(b)\varphi ||$, where $b\in \cB$.
Let $\cB_h=\{b \in \cB:b^*=b\}$ be the hermitian part of $\cB$ and let $\sum\cB^2$ be the cone of all finite sums of hermitian squares $b^*b$, where $b \in \cB$. We denote by $\cB_b$ the set of all $b\in \cB$ for which there exists a positive number $\lambda$ such that $\lambda {\cdot} 1 -b^*b \in \sum \cB^2$. Then $\cB_b$ is a $*$-algebra \cite{V}, see e.g. \cite{S3}. We say that $\cB$ is \textit{algebraically bounded} when $\cB=\cB_b$.
We write $T>0$ for a symmetric operator $T$ on a Hilbert space when $\langle T\psi,\psi \rangle >0$ for all nonzero vectors $\psi$ in its domain $\cD(T)$.

\section{Some Algebraic Preliminaries}

First let us fix the algebraic setup used throughout this paper.
We assume that $\cS_O$ is a $*$-invariant left Ore set of $\cA$. This means that $\cS_O$ is a unital $*$-invariant submonoid of $\cA\backslash\set{0}$ (that is, $1\in\cS_O$, $s^*\in\cS_O$ and $st\in\cS_O$ for $s,t\in\cS_O$) satisfying the left Ore condition (that is, for each $s\in\cS_O$ and $a\in\cA$ there exist $t\in\cS_O$ and $b\in\cA$ such that $ta=bs$). The symbol $1$ always denotes the unit element of $\cA$.
The $*$-invariance and the left Ore condition  imply that $\cS_O$ satisfies the
right Ore condition (that is, for any $s\in\cS_O$ and
$a\in\cA$ there are $t\in\cS_O$ and $b\in\cA$ such that
$at=sb$). Let $\cA\cS_O^{-1}$ be the fraction $*$-algebra
with denominator set $\cS_O$ (see e.g. \cite{R}, \cite{GW}).
We denote by $\cS$ a unital $*$-invariant submonoid of $\cS_O$ generated by a countable subset $\cS_g$, by  $\cS_G$ the set $\cS_g\cup \cS_g^*$ and by $\cA_G$ a $*$-invariant set of generators of the algebra $\cA$.

{\textit{Throughout we suppose that $\gX $ is a $*$-subalgebra
of  $\cA\cS_O^{-1}$ such that $\cS^{-1} \subseteq \cX$ and $\cA_G \subseteq \cX\cS$.}} \\
Let $\gX_G$ be  a fixed $*$-invariant sets of algebra generators of $\gX$.
For $s \in\cS$ let $\gI_s$ be the two-sided $*$-ideal of $\gX$ generated by  $s^{-1}$ (that is, $\gI_s= \gX s^{-1}\gX+\gX(s^*)^{-1}\gX$) and by $\gX_s=\gX/\gI_s$ the
corresponding quotient $*$-algebra. For notational simplicity we denote elements of $\cX$ and their images in $\gX_s$ under the canonical map
by the same symbol.

The main assumption used in this paper is the following condition:

$(O)$ \textit{$\cS^{-1}$ is a right Ore set of the algebra $\cX$, that is, for $s \in \cS$ and $x\in\gX$ there exist elements $t \in \cS$ and $y\in\gX$ such
that $xt^{-1}=s^{-1}y$ (or equivalently $sx =yt$).}

The next lemma is often used in what follows. It reformulates the well-known fact (\cite{GW}, Lemma 4.21(a))
 that finitely many fractions can be brought to a common denominator.
\begin{lemma}\label{commonden}
Assume that $(O)$ is satisfied.  Let $\cF$ be a finite subset of $\cS$. There exists an element $t_0 \in \cS$ such that $st^{-1}\in \cX$ and $t^{-1}s \in \gX$ for all $s \in \cF$, where $t=t_0^*t_0$.
\end{lemma}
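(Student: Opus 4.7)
The plan is to reduce the problem to finding a common \emph{right} denominator for a slightly enlarged, $*$-invariant finite set, and then to let $t = t_0^* t_0$ so that the left-sided statement follows from the right-sided one by $*$-symmetry. Concretely, I would replace $\cF$ by $\tilde\cF := \cF \cup \cF^*$, which is still a finite subset of $\cS$ because $\cS$ is $*$-invariant.

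The first substep is to produce $t_0 \in \cS$ with $s t_0^{-1} \in \cX$ for every $s \in \tilde\cF$. I would do this by induction on the cardinality of $\tilde\cF$. The base case is an immediate application of the Ore condition $(O)$ with $x = 1$: for any single $s \in \cS$, one obtains $u \in \cS$ and $y \in \cX$ with $s = y u$, whence $s u^{-1} = y \in \cX$. For the inductive step, assuming $t_k \in \cS$ already satisfies $s_i t_k^{-1} \in \cX$ for $i \le k$, apply $(O)$ to $s_{k+1}$ and $x := t_k^{-1} \in \cX$ to obtain $u \in \cS$ and $y \in \cX$ with $s_{k+1} t_k^{-1} = y u$. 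Setting $t_{k+1} := u t_k \in \cS$, one has $s_{k+1} t_{k+1}^{-1} = y \in \cX$ by construction; for $i \le k$, the identity $s_i t_{k+1}^{-1} = (s_i t_k^{-1}) u^{-1}$ exhibits the left-hand side as a product of two elements of $\cX$, using $\cS^{-1} \subseteq \cX$.

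It remains to verify that $t := t_0^* t_0$ does the job. Since $\cS$ is a $*$-invariant monoid, $t \in \cS$. For any $s \in \cF \subseteq \tilde\cF$, the factor $s t_0^{-1}$ lies in $\cX$ by the construction above, and $(t_0^*)^{-1} \in \cS^{-1} \subseteq \cX$, so
\[
s t^{-1} = (s t_0^{-1})\,(t_0^*)^{-1} \in \cX.
\]
For the other identity I exploit the fact that $s^* \in \tilde\cF$ as well, hence $s^* t_0^{-1} \in \cX$; applying the $*$-operation and using that $\cX$ is $*$-invariant yields $(t_0^*)^{-1} s \in \cX$, and therefore
\[
t^{-1} s = t_0^{-1}\,\bigl((t_0^*)^{-1} s\bigr) \in \cX.
\]

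I do not anticipate any genuine obstacle: the core of the argument is the standard common-right-denominator construction for the right Ore pair $(\cX, \cS^{-1})$, and the only twist is the $*$-symmetrization needed to accommodate both $st^{-1}$ and $t^{-1}s$ through a single denominator of the form $t_0^* t_0$.
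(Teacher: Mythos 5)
Your proof is correct and follows essentially the same route as the paper: an induction on the cardinality of the $*$-symmetrized set $\cF\cup\cF^*$ produces a common right denominator $t_0$, and the choice $t=t_0^*t_0$ together with $*$-invariance of $\cX$ and $\cS^{-1}\subseteq\cX$ yields both $st^{-1}\in\cX$ and $t^{-1}s\in\cX$. The only (immaterial) difference is the inductive step, where you apply $(O)$ to the pair $(s_{k+1},t_k^{-1})$ and enlarge the denominator to $ut_k$, while the paper applies it to $(t_1,s_1^{-1})$ and uses $t_2s_1$.
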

\begin{proof}
  We first prove by induction on the cardinality that for each finite set $\cF \subseteq \cS$ there exists $t_1 \in \cS$ such that $st_1^{-1} \in \gX$ for all $s\in \cF$. Suppose this is true for $\cF$. Let $s_1 \in \cS$. Since $s_1^{-1}\in \cX$, by assumption $(O)$ there are elements  $t_2 \in \cS$ and $y\in \cX$ such that $s_1^{-1}t_2^{-1}=t_1^{-1}y$. Then we have $s(t_2s_1)^{-1}=(st_1^{-1})y \in \gX$ for $s \in \cF$ and $s_1(t_2s_1)^{-1}=t_2^{-1}\in \gX$ which proves our claim for  $\cF\cup \{s_1\}$.

Now let $\cF$ be a finite subset of $\cS$. Applying the statement proved in the preceding
paragraph to the set $\cF\cup \cF^*$, there exists $t_0\in \cS$ such that $st_0^{-1} \in \gX$ and $s^*t_0^{-1} \in \gX$. Then we have $s(t_0^*t_0)^{-1}=(st_0^{-1})(t_0^{~*})^{-1} \in \gX$ and $(t_0^*t_0)^{-1}s =((s^*t_0^{-1})(t_0^{~*})^{-1})^* \in \gX$ for $s \in \cF$.
\end{proof}
Let $\cX\cS=\{xs ; x\in \cX,s\in S\}$ and $\cS\cX=\{sx  ; x\in \cX, s\in \cS\}$ considered as subsets of $\cA\cS_O^{-1}$. The next lemma collects some equivalent formulations of condition $(O)$. We omit the details of the simple proofs. In the proof of the implication $(iv)\to (v)$ we use Lemma \ref{commonden} in order to show that $\cX\cS$ is closed under addition.
\begin{lemma}\label{reconO}
The following are equivalent:\\
$(i)$ Condition (O) is satisfied.\\
$(ii)$ $\cX\cS=\cS\cX$.\\
$(iii)$ $\cX\cS$ is $*$-invariant.\\
$(iv)$ $\cX\cS$ is closed under multiplication.\\
$(v)$ $\cX\cS$ is a $*$-subalgebra of $\cA\cS_O^{-1}$.
\end{lemma}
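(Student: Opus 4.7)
The plan is to establish the cycle $(i)\Leftrightarrow(ii)\Leftrightarrow(iii)\Leftrightarrow(iv)$ by elementary manipulations using the $*$-invariance of $\cS$ and $\cX$, and then derive $(iv)\Leftrightarrow(v)$ with the help of Lemma \ref{commonden}. The working observation throughout is that $(\cX\cS)^{*}=\cS^{*}\cX^{*}=\cS\cX$, since both $\cS$ and $\cX$ are $*$-invariant.

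First I would unpack $(O)$ in its second form: it states precisely that for every $s\in\cS$ and $x\in\cX$ one can write $sx=yt$ with $y\in\cX$ and $t\in\cS$, which is the inclusion $\cS\cX\subseteq\cX\cS$. Applying $(O)$ to $s^{*}$ and $x^{*}$ and then taking adjoints gives the reverse inclusion, establishing $(i)\Leftrightarrow(ii)$. The equivalence $(ii)\Leftrightarrow(iii)$ is immediate from $(\cX\cS)^{*}=\cS\cX$: the set $\cX\cS$ is $*$-invariant if and only if $\cX\cS=\cS\cX$.

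For $(ii)\Rightarrow(iv)$ I would compute $(x_{1}s_{1})(x_{2}s_{2})=x_{1}(s_{1}x_{2})s_{2}$ and use $\cS\cX=\cX\cS$ to rewrite $s_{1}x_{2}=x's'$, obtaining $(x_{1}x')(s's_{2})\in\cX\cS$. Conversely, for $(iv)\Rightarrow(ii)$ I would note that $1\in\cS\cap\cX$ implies $\cS\cup\cX\subseteq\cX\cS$, so multiplicative closure yields $\cS\cX\subseteq\cX\cS$; taking adjoints and invoking $(\cX\cS)^{*}=\cS\cX$ forces the reverse inclusion, hence equality.

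For $(iv)\Rightarrow(v)$ condition $(O)$ is now available, so Lemma \ref{commonden} applies: given two elements $x_{1}s_{1},x_{2}s_{2}\in\cX\cS$, choose $t\in\cS$ with $z_{i}:=s_{i}t^{-1}\in\cX$ for $i=1,2$, and write $x_{1}s_{1}+x_{2}s_{2}=(x_{1}z_{1}+x_{2}z_{2})t\in\cX\cS$. Combined with multiplicative closure from $(iv)$ and $*$-invariance from $(iii)$, this shows $\cX\cS$ is a $*$-subalgebra of $\cA\cS_{O}^{-1}$. The implication $(v)\Rightarrow(iv)$ is trivial. No step is a genuine obstacle; the only point that requires care is the order in which the equivalences are set up, so that $(O)$ is already in hand before Lemma \ref{commonden} is invoked.
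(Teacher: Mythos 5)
Your proof is correct, and it follows essentially the route the paper has in mind: the paper omits the elementary manipulations but flags exactly the one nontrivial point, namely that the implication $(iv)\Rightarrow(v)$ requires Lemma \ref{commonden} to get closure of $\cX\cS$ under addition, which is precisely how you handle it (having first secured $(O)$ via $(iv)\Rightarrow(ii)\Rightarrow(i)$). Nothing further is needed.
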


Suppose that $(O)$ holds. Because $\cS^{-1}$ is $*$-invariant and a right Ore set of $\cX$ by $(O)$, it is also a left Ore set and the $*$-algebra $\cX(\cS^{-1})^{-1}$ of quotients with denominator set $\cS^{-1}$ exists. Since $\cX$ is a $*$-subalgebra of $\cA\cS_O^{-1}$, it follows from the universal property of algebras of quotients that $\cX(\cS^{-1})^{-1}$ is $*$-isomorphic to the $*$-subalgebra $\cX\cS$ (by Lemma \ref{reconO}) of $\cA\cS_O^{-1}$. As assumed above the $*$-algebra $\cX\cS$ contains the generator set $\cA_G$ of the algebra $\cA$. Therefore,  we have
\begin{align}\label{axs}
\cA \subseteq \cX\cS.
\end{align}

The following three conditions are on sets of generators of $\cS$ and $\cX$. Because of Lemma \ref{A12} below they are convenient tools for the verification of condition $(O)$.

$(IA)$ \textit{For $s \in \cS_G$ and $x\in\gX_G$ there is an element $y\in\gX$ such
that $xs^{-1}=s^{-1}y$.}

$(A1)$ \textit{For $s \in \cS_G$ and $x\in\gX_G$ there exist elements $t \in \cS_G$ and $y\in\gX$ such
that $xt^{-1}=s^{-1}y$.}

$(A2)$ \textit{Given $s_1,s_2 \in \cS_G$, there exists  an  element $t \in \cS$ such that $s_1t^{-1}\in\gX$ and $s_2t^{-1}\in\gX $.}

Note that $(IA)$ is a strengthening of $(A1)$.  An equivalent formulation of $(IA)$ is that for each generator $s \in \cS_G$ (and hence for all $x \in \cS$) the inner automorphism $\alpha_s(x):=sxs^{-1}$ of the algebra $\cA\cS_O^{-1}$ leaves $\cX$ invariant.

\begin{lemma}\label{A12}
$(i)$ If $(A1)$ and $(A2)$ are satisfied, then $(O)$ holds.\\
$(ii)$ If $(IA)$ is fulfilled, then $(A1)$, $(A2)$ and hence $(O)$ are valid.
\end{lemma}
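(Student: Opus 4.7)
For part (ii), both implications are short computations. $(IA)$ coincides with $(A1)$ upon the choice $t := s$, so $(A1)$ is immediate. For $(A2)$, given $s_1,s_2 \in \cS_G$, I set $t := s_2 s_1 \in \cS$; then $s_1 t^{-1} = s_2^{-1} \in \cS^{-1} \subseteq \cX$, while $s_2 t^{-1} = s_2 s_1^{-1} s_2^{-1} = \alpha_{s_2}(s_1^{-1})$ lies in $\cX$ by the equivalent formulation of $(IA)$ recorded just above the lemma (namely that $\alpha_s$ preserves $\cX$ for each $s \in \cS_G$) together with $s_1^{-1} \in \cX$. Once part (i) is proven, $(O)$ follows.

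For part (i), the target is exactly $\cS \cdot \cX \subseteq \cX \cdot \cS$. My plan proceeds in three stages. \emph{Stage 1:} for $s \in \cS_G$ and a monomial $x_1 \cdots x_n$ with $x_i \in \cX_G$, iterating $(A1)$ produces $s x_1 \cdots x_n = y t$ with $y \in \cX$ and $t \in \cS_G$. Indeed, $(A1)$ applied to $(s,x_1)$ gives $s x_1 = y_1 t_1$ with $t_1 \in \cS_G$; applied again to $(t_1,x_2)$ it gives $t_1 x_2 = y_2 t_2$ with $t_2 \in \cS_G$; and so on. Crucially, the denominator stays in $\cS_G$ at every step, so $(A1)$ keeps applying. \emph{Stage 2:} an arbitrary $x \in \cX$ is a finite linear combination of such monomials, so linearity yields $s x = \sum_j c_j y_j t_j$ with $y_j \in \cX$ and $t_j \in \cS_G$. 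A common right-denominator $u \in \cS$ satisfying $t_j u^{-1} \in \cX$ for all $j$ then rewrites this as $s x = \bigl(\sum_j c_j y_j t_j u^{-1}\bigr)\, u \in \cX \cdot \cS$. \emph{Stage 3:} for general $s \in \cS$, induct on the word length of $s$ in $\cS_G$. Writing $s = s' s_0$ with $s_0 \in \cS_G$, Stage 2 gives $s_0 x = y' u$; the inductive hypothesis applied to $(s',y')$ then yields $s' y' \in \cX \cdot \cS$, so $sx = (s' y')\, u \in \cX \cdot \cS \cdot \cS \subseteq \cX \cdot \cS$.

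The main technical obstacle is the common right-denominator claim underpinning Stage 2: for every finite $F \subseteq \cS_G$ there must exist $u \in \cS$ with $s u^{-1} \in \cX$ for all $s \in F$. Since $(A2)$ is stated only for pairs, this has to be built up by induction on $|F|$, combining $(A2)$ with the partial Ore condition emerging from Stages 1 and 3. Ordering the inductions so that they close without circular reasoning is the delicate point; once this is arranged, the three stages combine to give $\cS \cdot \cX \subseteq \cX \cdot \cS$, which is $(O)$ (and in particular agrees with the closure of $\cX\cS$ under multiplication in Lemma \ref{reconO}).
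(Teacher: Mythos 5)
Your part (ii) is correct and is exactly the paper's argument up to the symmetric choice $t=s_2s_1$ in place of the paper's $t=s_1s_2$; likewise Stages 1 and 3 of your part (i) reproduce the paper's proof (Stage 1 is the multiplicative closure of the set $\cY$ of elements whose denominators can be kept in $\cS_G$, Stage 3 is the paper's concluding induction over products of generators). So the skeleton is the right one.

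The problem is that Stage 2 is announced rather than proved, and it is the decisive step. You defer the claim that every finite subset $F\subseteq\cS_G$ admits a common right denominator $u\in\cS$ with $su^{-1}\in\cX$ for all $s\in F$, and your own worry about circularity is justified: the natural induction on $|F|$ has to merge a new generator $t_n$ into an already constructed composite denominator $u\in\cS$, and (exactly as in the proof of Lemma \ref{commonden}) that merge is an instance of condition $(O)$ for the non-generator $u$ --- one needs $u\,t_n^{-1}=wv$ with $w\in\cX$, $v\in\cS$ --- which in your architecture only becomes available after Stage 3, while Stage 3 presupposes Stage 2. So the inductions, as you describe them, do not close, and ``once this is arranged'' is precisely the missing content. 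Note how the paper sidesteps any merge against composite denominators: since Stage 1 keeps each denominator a single generator, it applies $(A2)$ directly to the pair $t_1,t_2\in\cS_G$ produced by two monomials and thereby gets $(O)$ for $s\in\cS_G$ on two-term combinations; a general element of ${\Lin}\,\cY$ then requires only the simultaneous version of $(A2)$ for finitely many elements of $\cS_G$, never $(O)$ for a composite $s$. That finite version is what you should have isolated and settled: it does not follow from the pairwise statement by the merging route you sketch, but it is immediate in the situation of part (ii), where $(IA)$ holds --- take $u=t_1t_2\cdots t_n$ and observe $t_ju^{-1}=\alpha_{t_j}(t_n^{-1}\cdots t_{j+1}^{-1})\,t_{j-1}^{-1}\cdots t_1^{-1}\in\cX$ --- and it is the evident iteration of $(A2)$ that the paper's two-term computation is standing in for. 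As written, your proposal leaves part (i) incomplete at exactly this point.
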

\begin{proof}
(i): Let $\cY$ denote the set of elements $x\in \cX$ such that for each $s \in \cS_G$ there exist $t \in \cS_G$ (!) and $y\in \cX$ satisfying $sx=yt$.
Let $x_1, x_2 \in \cY$ and $s \in \cS_G$. Then there are $t_1, t_2 \in \cS_G$ and $y_1, y_2 \in \cX$ such $sx_1=y_1t_1$ and $sx_2=y_2t_2$.
Since $t_1\in \cS_G$ and $x_2\in \cY$, there exist $t_3\in \cS_G$ and $y_3 \in \cX$ such that $t_1x_2=y_3t_3$. Then we have $sx_1x_2 =y_1t_1x_2 =y_1y_3t_3$, so that $x_1x_2 \in \cY$.
Because $\cY$ contains the set $\cX_G$
of algebra generators by $(A1)$, it follows that ${\Lin}~ \cY =\cX$.

Since
$t_1,t_2 \in \cS_G$, condition (A2) applies and there exists $t \in \cS$ such that $t_1t^{-1}, t_2 t^{-1} \in \cX$. Then we have $s(\lambda_1x_1+\lambda_2x_2)= (\lambda_1 y_1t_1t^{-1}+\lambda_2y_2t_2t^{-1})t\in \cX{\cdot}t$ for $\lambda_1,\lambda_2\in \dC$. This proves that  $(O)$ is valid for generators $s \in \cS_G$ and for all $x \in{\Lin}~\cY=\cX$.

Now suppose  $s_1$ and $s_2$ are elements of $\cS$ such that the assertion of $(O)$ holds for all elements of $\cX$. Therefore, if $x\in \cS$, then there are $t_1, t_2\in \cS$ and $y_1,y_2\in \cX$ such that $s_1x=y_1t_1$ and $s_2y_1=y_2t_2$. Then, $s_2s_1x= s_2y_1t_1=y_2t_2t_1$, that is, $(O)$ holds for the product $s_1s_2$ and all $x \in \cX$ as well. Hence condition $(O)$ is valid for arbitrary elements $s \in \cS$ and $x\in \cX$.

(ii): Trivially, $(IA)$ implies $(A1)$. Let $s_1, s_2 \in \cS_G$. Putting $t=s_1s_2$, we have $s_1t^{-1}=
\alpha_{s_1}(s_2^{-1}) \in \cX$ as follows from $(A3)$ and $s_2t^{-1}=s_1^{-1}\in \cX$. This proves $(A2)$.
\end{proof}

{\it{Throughout the rest of this paper we assume that assumption $(O)$ is satisfied.}}

Now let $\rho$ be a $*$-representation of $\cX$. Since $\rho$ is a right $\cX$-module and $\cS^{-1}$ is a right Ore set,
$$
\cD_{tor}(\rho):= \{\varphi \in \cD(\rho): {\rm There~ exists}~ s \in \cS ~{\rm such~ that}~ \rho(s^{-1})\varphi=0 \}
$$
is a linear subspace of $\cD(\rho)$ which is invariant under $\rho$ (\cite{GW}, Lemma 4.12). Hence the restriction $\rho_{tor}$ of $\rho$ to the $t_\rho$-closure of $\cD_{tor}(\rho)$ in $\cD(\rho)$ is a $*$-representation $\rho_{tor}$ of $\cX$ called the {\it{$\cS^{-1}$-torsion subrepresentation}}
of $\rho$. We say that $\rho$ is {\it{$\cS^{-1}$-torsion}} if $\cD_{tor}(\rho)=\cD(\rho)$ and that $\rho$ is {\it{$\cS^{-1}$-torsionfree}} if $\cD_{tor}(\rho)=\{0\}$. We shall omit the prefix $\cS^{-1}$ if no confusion can arise.

Suppose now that $\rho$ is a {\it{bounded}} $*$-representation of $\cX$ on a Hilbert space $\cH(\rho)=\cD(\rho)$. Then $\cD(\rho_{tor})$ is closed subspace of $\cH(\rho)$ and $\rho$ is a direct sum of the {\it{torsion subrepresentation}} $\rho_{tor}$ on the Hilbert space $\cD(\rho_{tor})$ and a {\it{torsionfree subrepresentation}} $\rho_{tfr}$ on $\cD(\rho_{tfr}):=\cH(\rho)\ominus \cD(\rho_{tor})$.
\begin{lemma}\label{decomp}
Suppose that condition (IA) is satisfied. Then each bounded  $*$-representation $\rho$ of $\gX$ on a Hilbert space $\cH(\rho)=\cD(\rho)$ decomposes into a direct sum $\rho = \rho_{tfr}\oplus ( \oplus_{s\in \cS_G} \rho_s)$ of $*$-representations $\rho_{tfr}$ and  $\rho_s$ of $\gX$ such that $\rho_{tfr}$ is torsionfree and $\rho_s(s^{-1})=0$ for $s \in \cS_G$, so $\rho_s$ factors to a $*$-representation of the $*$-algebra $\gX_s=\gX/ \gI_s$.
\end{lemma}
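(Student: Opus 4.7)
The preceding paragraph already gives the split $\rho=\rho_{tor}\oplus \rho_{tfr}$, so the task is to further decompose $\rho_{tor}$ as $\oplus_{s\in\cS_G}\rho_s$ with $\rho_s(s^{-1})=0$. My plan has three stages: deduce from $(IA)$ that each $\ker\rho(s^{-1})$, $s\in\cS_G$, is a reducing subspace; peel off orthogonal reducing subspaces $\cK_n$ along an enumeration of $\cS_G$; and combine density of torsion vectors with a minimal-word argument to show the $\cK_n$ exhaust $\cD(\rho_{tor})$.

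Stage~1. Condition $(IA)$ says $\alpha_s(\gX_G)\subseteq \gX$, so $\alpha_s(\gX)\subseteq \gX$; applying this to $s^{*}\in\cS_G$ and taking adjoints gives $\alpha_s^{-1}(\gX)\subseteq \gX$, whence $\gX s^{-1}=s^{-1}\gX$. Thus for every $x\in\gX$ there is $z\in\gX$ with $s^{-1}x=zs^{-1}$, so $\rho(s^{-1})\rho(x)=\rho(z)\rho(s^{-1})$ and $\cH_s:=\ker\rho(s^{-1})$ is invariant (and, by the $*$-property, reducing) under $\rho(\gX)$.

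Stage~2. Since $\cS$ is countable, so is $\cS_G$; I enumerate $\cS_G=\{s_n : n\in\dN\}$ and set
\[
\cK_1:=\cH_{s_1}\cap \cD(\rho_{tor}),\qquad \cD_n:=\cD(\rho_{tor})\ominus(\cK_1\oplus\cdots\oplus \cK_n),\qquad \cK_{n+1}:=\cH_{s_{n+1}}\cap \cD_n.
\]
Each $\cK_n$ is an intersection of reducing subspaces, so it is reducing, it lies in $\ker\rho(s_n^{-1})$, and the $\cK_n$ are mutually orthogonal. It remains to show $\cD_\infty:=\bigcap_n \cD_n=\{0\}$.

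Stage~3. Because $\cD_\infty$ reduces $\rho$, its orthogonal projection $P_\infty$ commutes with all of $\rho(\gX)$ and hence maps $\cD_{tor}(\rho)$ into $\cD_{tor}(\rho)\cap \cD_\infty$; density of $\cD_{tor}(\rho)$ in $\cD(\rho_{tor})$ then makes $\cD_{tor}(\rho)\cap \cD_\infty$ dense in $\cD_\infty$. If $\cD_\infty\neq\{0\}$, I would pick a nonzero $\varphi\in\cD_{tor}(\rho)\cap \cD_\infty$ and choose $s=s_{i_1}\cdots s_{i_k}\in\cS$ of minimal word length in $\cS_G$ with $\rho(s^{-1})\varphi=0$. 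Setting $\psi:=\rho(s_{i_{k-1}}^{-1}\cdots s_{i_1}^{-1})\varphi$, invariance of $\cD_\infty$ keeps $\psi\in\cD_\infty$, minimality of $k$ forces $\psi\neq 0$, and by construction $\rho(s_{i_k}^{-1})\psi=0$; letting $j$ be the index with $s_{i_k}=s_j$, $\psi\in\cH_{s_j}\cap \cD_{j-1}=\cK_j$ while also $\psi\in \cD_\infty\perp \cK_j$, a contradiction. Hence $\cD(\rho_{tor})=\bigoplus_n\cK_n$; setting $\rho_{s_n}:=\rho|_{\cK_n}$ delivers the desired decomposition, and the $*$-invariance of $\gI_s$ guarantees each $\rho_s$ factors through $\gX_s$. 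The main obstacle is precisely this last contradiction: producing a single-generator kernel vector out of a long-word one while staying inside $\cD_\infty$---$(IA)$ supplies the needed invariance, and the minimal-length choice supplies the nonvanishing.
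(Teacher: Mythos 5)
Your argument is correct, and its first two stages coincide with the paper's proof: condition $(IA)$ applied to $s^*$ and conjugated gives $s^{-1}\gX\subseteq \gX s^{-1}$, hence each $\ker\rho(s^{-1})$ is reducing, and one peels these kernels off along an enumeration of $\cS_G$. Where you genuinely diverge is the conclusion. The paper peels the kernels off the \emph{whole} space $\cH(\rho)$ and then simply observes that on the remaining summand every $\rho(r^{-1})$, $r\in\cS_G$, is injective by construction; since that summand is invariant and every $s\in\cS$ is a word in $\cS_G$, all $\rho(s^{-1})$ are injective there, so the remainder is torsionfree and no exhaustion statement about the torsion part is needed at all. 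You instead fix the torsionfree summand to be $\cH(\rho)\ominus\cD(\rho_{tor})$ from the preceding paragraph and must therefore prove $\cD_\infty=\{0\}$, which you do by a correct but longer detour: the projection onto $\cD_\infty$ commutes with $\rho(\gX)$, so the algebraic torsion vectors in $\cD_\infty$ are dense in it, and the minimal-word-length argument reduces any such nonzero vector to one killed by a single generator, contradicting orthogonality to the corresponding $\cK_j$. Your route buys the explicit identity $\cD(\rho_{tor})=\oplus_n\cK_n$ (which also follows a posteriori from the paper's decomposition, since its torsionfree remainder meets $\cD_{tor}(\rho)$ only in $\{0\}$), at the cost of the density-plus-minimal-word machinery that the paper's choice of remainder renders unnecessary.
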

\begin{proof}
We enumerate the countable
set $\cS_G$ as $\cS_G=\{r_j;j \in N\}$, where either $N=\{1,\cdots,m\}$ with $m \in \dN$ or $N=\dN$. Put $\cH_{r_1}:= \ker~\rho(r_1^{-1})$. Let $x \in \cX$. Since $\cS_G$ is $*$-invariant, $r_1^*\in \cS_G$ and hence $y:=r_1^*x^*(r_1^*)^{-1}\in \cX$ by  $(IA)$, so that
$\rho(r_1^{-1})\rho(x)\varphi=\rho(y^*)\rho(r_1^{-1})\varphi=0$ for
$\varphi\in\cH_{r_1}$. That is, the (bounded) $*$-representation $\rho$ leaves $\cH_{r_1}$ invariant.
Let $\rho_{r_1}$ and $\tilde{\rho}$ denote the restrictions of $\rho$ to $\cH_{r_1}$ and $\cH(\rho)\ominus \cH_{r_1}$, respectively. Then we have
$\rho_{r_1}(r_1^{-1})=0$ and $\ker~\tilde{\rho}(r_1^{-1})=\{0\}$ by definition.
 Proceeding in similar manner by induction, we obtain an orthogonal direct sum of
$*$-representations $\rho_{r_j}$ of $\gX$ on subspaces $\cH_{r_j}$, $j \in N$. Clearly, for the restriction $\rho_{tfr}$ of $\rho$ to the invariant subspace $\cH_0:=\cH(\rho)\ominus(\oplus_{j\in N}\cH_{r_j})$ we have ${\rm ker}\rho(s^{-1})=\set{0}$  for $s \in \cS_G$ and hence for all $s\in \cS$. This means that $\rho_{tfr}$ is torsionfree.
\end{proof}

Let us illustrate the preceding decomposition by a very simple example.
\begin{exam}\label{exam1}
Let $\cA=\dC[x]$ be the $*$-algebra of polynomials in a hermitian variable $x$. Set
$\cS_G= \{ s{:=}x^2{+}1
 \}$ and $\cS=\cS_O= \{ s^n; n \in \dN_0 \}$. Let $\cX$ be the unital $*$-subalgebra of $\cA\cS_O^{-1}$ generated by $a{:=}s^{-1}$ and $b{:=}xs^{-1}$. It is not difficult to show that each $*$-representation $\rho$ of $\cX$ is of the form
\begin{align*}
\rho(p(a,b))=\int_\cC p(\lambda, \mu) dE(\lambda,\mu),~~ p\in \dC[a,b],
\end{align*}
for some spectral measure $E$ on $\cH(\rho)$ supported on the circle $\cC$ given by the equation $\lambda^2+\mu^2=\lambda$. Then we have $\cD(\rho_{tor})=\cD(\rho_s)=E((0,0))\cH(\rho)$,
$\cD(\rho_{tfr})=E(\cC \backslash (0,0))\cH(\rho)$ and $\rho_s(p(a,b))\varphi=p(0,0)\varphi$ for $\varphi \in E((0,0))\cH(\rho).$ Note that $b^2\in \cJ_s$ and $b \notin \cJ_s$, but $\rho_s(b)=0$ (see e.g. Lemmas \ref{t2} and \ref{rho} below).
\end{exam}

\section{Representations of $\cA$ Associated with Torsionfree Representations of $\gX$}

Suppose that  $\rho$ is a {\it bounded  $\cS^{-1}$-torsionfree} $*$-representation of the $*$-algebra $\gX$ on a Hilbert
space $\cD(\rho)=\cH$. That $\rho$ is torsionfree means that $\ker\rho(s^{-1})=\set{0}$ for all $ s \in \cS$.
Our  aim is to associate an (unbounded) $*$-representation $\pi_\rho$ of the $*$-algebra $\cA$ with $\rho$.
Define
\begin{align}\label{domrho}
\cD_\rho=\cap_{s\in\cS}~\rho(s^{-1})\cH.
\end{align}

\begin{lemma}\label{drho}
$(i)$ $\cD_\rho$ is dense in the Hilbert space $\cH.$\\
$(ii)$ $\rho(x)\cD_\rho\subseteq\cD_\rho$ for  $x\in\gX.$\\
$(iii)$ $\rho(s^{-1})\cD_\rho=\cD_\rho$ for $s\in\cS$.
\end{lemma}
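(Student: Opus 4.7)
I will prove the three parts in the order (ii), (iii), (i): (ii) and (iii) are clean algebraic consequences of the Ore condition $(O)$ and torsion-freeness, while the density statement (i) is the most delicate and benefits from having the invariance already in hand. For (ii), given $x \in \cX$, $s \in \cS$, and $\varphi \in \cD_\rho$, apply $(O)$ to obtain $t \in \cS$ and $y \in \cX$ with $xt^{-1} = s^{-1}y$ in $\cX$. Since $\varphi \in \rho(t^{-1})\cH$ by the definition of $\cD_\rho$, write $\varphi = \rho(t^{-1})\chi$ for some $\chi \in \cH$ and compute $\rho(x)\varphi = \rho(xt^{-1})\chi = \rho(s^{-1})\rho(y)\chi \in \rho(s^{-1})\cH$; as $s$ was arbitrary, $\rho(x)\varphi \in \cD_\rho$.

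For (iii), the inclusion $\rho(s^{-1})\cD_\rho \subseteq \cD_\rho$ is just (ii) applied to $x = s^{-1} \in \cX$. For the reverse inclusion, take $\varphi \in \cD_\rho$ and let $\psi \in \cH$ be the unique vector with $\rho(s^{-1})\psi = \varphi$ (uniqueness from torsion-freeness). For any $r \in \cS$ we have $rs \in \cS$ and $(rs)^{-1} = s^{-1}r^{-1}$, hence $\varphi \in \rho((rs)^{-1})\cH = \rho(s^{-1})\rho(r^{-1})\cH$; writing $\varphi = \rho(s^{-1})\rho(r^{-1})\chi$ and using injectivity of $\rho(s^{-1})$ gives $\psi = \rho(r^{-1})\chi \in \rho(r^{-1})\cH$, and since $r$ was arbitrary, $\psi \in \cD_\rho$.

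For (i), I use countability of $\cS$: enumerating $\cS = \{s_n\}$ and applying Lemma~\ref{commonden} inductively produces elements $t_n \in \cS$ with $\rho(t_{n+1}^{-1})\cH \subseteq \rho(t_n^{-1})\cH$ and $\rho(t_n^{-1})\cH \subseteq \bigcap_{i \le n} \rho(s_i^{-1})\cH$, so $\cD_\rho = \bigcap_n \rho(t_n^{-1})\cH$. Each $\rho(t_n^{-1})\cH$ is itself dense in $\cH$ (the adjoint $\rho((t_n^*)^{-1})$ is injective, since $t_n^* \in \cS$ and $\rho$ is torsion-free). The main obstacle---and the hardest step---is to pass from density of each member to density of the decreasing intersection; the natural route is a Mittag--Leffler argument applied to the inverse system of Hilbert spaces $X_n := \rho(t_n^{-1})\cH$, normed so that $\rho(t_n^{-1}):\cH \to X_n$ is an isometric isomorphism, where the required density of the continuous inclusions $X_{n+1} \hookrightarrow X_n$ reduces, after transport, to dense range in $\cH$ of the intermediate operators $\rho(t_n t_{n+1}^{-1})$, which I plan to extract from torsion-freeness together with the explicit form $t_n = t_0^* t_0$ supplied by Lemma~\ref{commonden}.
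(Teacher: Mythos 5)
Your parts (ii) and (iii) are correct and essentially identical to the paper's arguments, and for (i) you have set up exactly the paper's framework: a decreasing chain of Hilbert spaces $X_n=\rho(t_n^{-1})\cH$ with transported norms, the identification $\cD_\rho=\bigcap_n X_n$, and a Mittag--Leffler argument whose only nontrivial hypothesis is the density of $X_{n+1}$ in $X_n$, which you correctly translate into dense range in $\cH$ of $\rho(t_nt_{n+1}^{-1})$. (Your preliminary observations are fine: countability of $\cS$ justifies the enumeration, Lemma \ref{commonden} applied to $\{s_1,\dots,s_n,t_{n-1}\}$ gives the nesting and $t_nt_{n+1}^{-1}\in\cX$, and boundedness of $\rho(t_nt_{n+1}^{-1})$ gives continuity of the inclusions.)

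However, the decisive step is exactly the one you leave as a plan: you assert that dense range of $\rho(t_nt_{n+1}^{-1})$ will be ``extracted from torsion-freeness together with the explicit form $t_n=t_0^*t_0$,'' but this is not a routine consequence of those two facts, and as stated the proposal has a genuine gap precisely where the paper invests its main computation. Dense range of $\rho(t_nt_{n+1}^{-1})$ is equivalent (using $t_k=t_k^*$ and the $*$-property of $\rho$) to $\ker\rho(t_{n+1}^{-1}t_n)=\{0\}$, and the element $t_{n+1}^{-1}t_n\in\cX$ is \emph{not} of the form $s^{-1}$ with $s\in\cS$, so torsion-freeness does not apply to it directly; all that comes for free is injectivity of $\rho(t_{n+1}^{-1}t_n)$ on the dense subspace $\Ran\,\rho(t_n^{-1})$ (from $\rho(t_{n+1}^{-1}t_n)\rho(t_n^{-1})=\rho(t_{n+1}^{-1})$), which does not give injectivity on $\cH$. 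To close the gap you need an additional algebraic input: either the paper's route, namely the explicit orthogonality computation in $(E_n,(\cdot,\cdot)_n)$ which moves $\rho(t_nt_{n+1}^{-1})$ to the other side of the pairing via its adjoint $\rho(t_{n+1}^{-1}t_n)$ and reduces the statement to vanishing of $\rho$ on an inverse from $\cS$, or, more structurally, the Ore machinery itself: since $t_{n+1}t_n^{-1}\in\cA\cdot\cX\subseteq\cX\cS=\cS\cX$ (Lemma \ref{reconO}), one can write $t_{n+1}t_n^{-1}=x_0s_0$ with $x_0\in\cX$, $s_0\in\cS$, and then $x_0^*\,(t_{n+1}^{-1}t_n)=(s_0^*)^{-1}$, so $\rho(t_{n+1}^{-1}t_n)\chi=0$ forces $\rho((s_0^*)^{-1})\chi=0$ and hence $\chi=0$ by torsion-freeness. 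Some argument of this kind (using condition $(O)$, not just hermiticity of $t_n$) must be supplied; without it the Mittag--Leffler hypothesis, and hence part (i), is not established.
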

\begin{proof}
$(i)$: The main technical tool for proving this assertion is the so-called {\it Mittag-Leffler lemma} (see e.g. \cite{S1}, p. 15). Let us develop the necessary setup for this result.

We enumerate the countable set $\cS_G$ of generators as $\cS_G=\{r_j;j \in N\}$ such that $r_1=1$, where $N=\{1,\cdots,m\}$ with $m \in \dN$ or $N=\dN$. For $n\in \dN$ let $\cS^n$ denote the set of all products
$r_{j_1}\dots r_{j_n},$ where $j_1\leq n,\dots,j_r\leq n$ and $j_1,\dots,j_n \in N$. Since the set $\cS^n$ is finite, it follows from Lemma \ref{commonden} that for each $n\in \dN$ there exists an element $t_n=t_n^* \in \cS$
such that $st_n^{-1}\in\gX$ for all $s\in\cS^n$ and $t_{n}t_{n+1}^{-1}\in\gX$. Setting $\cS^0=\{1\}$ and $t_0=1$, the latter is also satisfied for $n=0$.

For $n \in \dN_0$, let $E_n$ denote the vector space $\rho(t_n^{-1})\cH$ equipped with the scalar product defined by $(\varphi,\psi)_n=\langle\rho(t_n^{-1})^{-1}\varphi,\rho(t_n^{-1})^{-1}\psi \rangle$, where $\varphi, \psi \in E_n$. Since $E_n$ is the range of the bounded injective operator $\rho(t_n^{-1})$, $(E_n,( \cdot,\cdot )_n)$ is a Hilbert space with norm $||\varphi||_n =||\rho(t_n^{-1})^{-1}\varphi ||$.

We first show that $E_{n+1}$ is a subspace of $E_n$ and that $||\cdot||_n\leq c_n ||\cdot||_{n+1}$ for some positive constant $c_n$. For let $\psi \in \cH$ and set $\varphi :=\rho(t_{n+1}^{-1})\psi$. Since $t_nt_{n+1}^{-1}\in \cX$ by the choice of elements $t_k$ , we obtain $\varphi =\rho(t_{n+1}^{-1})\psi =\rho(t_n^{-1})\rho(t_nt_{n+1}^{-1})\psi
$
which proves that $E_{n+1} \subseteq E_n$. By definition we have $||\varphi||_{n+1} =||\psi ||$ and hence $
||\varphi||_n =||\rho(t_nt_{n+1}^{-1})\psi||\leq ||\rho(t_nt_{n+1}^{-1})||~ ||\varphi||_{n+1}.$

Next we prove that $E_{n+1}$ is dense in the normed space $(E_n,||\cdot||_n)$.
For this it suffices to show that each vector $\zeta\in E_n $ which is orthogonal to  $E_{n+1}$ in the Hilbert space $(E_n,( \cdot,\cdot )_n)$ is the null vector. Put $\xi:=\rho(t_n^{-1})\zeta$. That the vector $\zeta$ is orthogonal to $E_{n+1}$ means that
\begin{align*}
0&=(\zeta,\rho(t_{n+1}^{-1})\varphi)_n=\langle\rho(t_n^{-1})^{-1}\zeta,\rho(t_n^{-1})^{-1}\rho(t_{n+1}^{-1})\varphi \rangle =\langle \xi, \rho(t_n^{-1})^{-1}\rho(t_n^{-1})\rho(t_nt_{n+1}^{-1})\varphi \rangle \\ & =\langle \xi, \rho(t_nt_{n+1}^{-1})\varphi \rangle = \langle \rho(t_nt_{n+1}^{-1})^* \xi,\varphi \rangle
= \langle \rho(t_{n+1}^{-1}t_n)\xi, \varphi \rangle =\langle \rho(t_{n+1}^{-1}t_n)\rho(t_n^{-1})\zeta,\varphi \rangle =\langle \rho(t_{n+1}^{-1})\zeta,\varphi\rangle
\end{align*}
for all $\varphi \in \cH$, where we freely used the properties of the $*$-representation $\rho$ of $\gX$ and of the larger $*$-algebra $\cA\cS_O^{-1}$. Thus we obtain $\rho(t_{n+1}^{-1}) \zeta=0$.
Since $\rho$ is torsionfree,  $\ker\rho(t_{n+1}^{-1})=\set{0}$. Hence we get $\zeta =0$. This proves that $E_{n+1}$ is dense in $E_n$.

In the preceding two paragraphs we have shown that the assumptions of the Mittag-Leffler lemma (see \cite{S1}, Lemma 1.1.2) are fulfilled. From this result it follows that the vector space $E_\infty := \cap_{n\in \dN_0}E_n$ is dense in the normed space $E_0=\cH$.  Obviously,
$\cD_\rho\subseteq E_\infty.$ Let $s \in \cS$. Then $s \in \cS^n$ for some $n \in \dN$ and hence $\rho(t_n^{-1})\cH =\rho(s^{-1})\rho(st_n^{-1})\cH \subseteq \rho(s^{-1})\cH$. This in turn yields $E_\infty \subseteq \cD_\rho$. Therefore, $\cD_\rho=E_\infty$ is dense in $\cH$.

(ii): Suppose that $\varphi\in\cD_\rho$ and $x\in\gX$. Let
$s\in\cS.$ By assumption $(O)$ there exist elements $t\in \cS$ and $y \in \cX$ such that $sx=yt$, so that $xt^{-1}=s^{-1}y$. From the definition (\ref{domrho}) of $\cD_\rho$, there is a vector $\psi\in\cH$ such that $\varphi =\rho(t^{-1})\psi$. Then we have
$\rho(x)\varphi= \rho(xt^{-1})\psi=\rho(s^{-1})\rho(y)\psi\in \rho(s^{-1})\cH$.
Since $s \in \cS$ was arbitrary, we have shown that $\rho(x)\varphi\in \cap_{s \in \cS}~ \rho(s^{-1})\cH =\cD_\rho.$

(iii): Suppose $s \in \cS$. Since $\rho(s^{-1})\cD_\rho \subseteq \cD_\rho$ by (ii), it suffices to show that each vector $\varphi \in \cD_\rho$ belongs to $\rho(s^{-1})\cD_\rho$. According to the definition of $\cD_\rho$, we have $\varphi \in \rho(s^{-1})\cH$ and $\varphi \in  \rho ((ts)^{-1})\cH$ for each $t \in \cS$, that is, there are vectors $\psi \in \cH$ and $\eta_t \in \cH$ such that $\varphi =\rho(s^{-1})\psi =\rho((ts)^{-1})\eta_t$.
Since $\ker\rho(s^{-1})=\set{0}$, the latter implies that $\psi =\rho(t^{-1})\eta_t$, so  that $\psi \in \cap_{t \in \cS}~ \rho(t^{-1})\cH =\cD_\rho$ and $\varphi =\rho(s^{-1})\psi$.
\end{proof}

Let $a\in\cA.$ Suppose that $s$ is an element of $\cS$ such that $as^{-1}\in\gX.$
From (\ref{axs}) it follows that such an element $s$ always exists. Define
\begin{gather}\label{defpi}
\pi_\rho(a)\varphi:=\rho(as^{-1})\rho(s^{-1})^{-1}\varphi,~~
\varphi\in\cD_\rho.
\end{gather}

\begin{thm}\label{assrep}
Let $\rho$ be a bounded  $\cS^{-1}$-torsionfree $*$-representation of the $*$-algebra $\gX$ on a Hilbert space $\cH$. Then $\pi_\rho$ is a well-defined closed $*$-representation of the $*$-algebra
$\cA$ with Frechet graph topology on the dense domain $\cD(\pi_\rho):=\cD_\rho$ of the Hilbert space $\cH$. For $s \in \cS$ and $\varphi \in \cD(\pi_\rho)$ we have $\pi_\rho(s)\cD(\pi_\rho)=\cD(\pi_\rho)$ and
$\pi_\rho(s)\varphi=\rho(s^{-1})^{-1}\varphi$. The $*$-representation $\pi_\rho$ of $\cA$ is irreducible if and only if the $*$-representation $\rho$ of  $\gX$ is irreducible.
\end{thm}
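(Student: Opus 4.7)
I would verify the theorem in four steps: well-definedness and the algebraic/${*}$-structure of $\pi_\rho$; the formula $\pi_\rho(s)\varphi=\rho(s^{-1})^{-1}\varphi$ with $\pi_\rho(s)\cD(\pi_\rho)=\cD(\pi_\rho)$; the Fr\'echet graph topology and closedness; and finally the equivalence of irreducibility. The heart of the argument is in the first and fourth items.

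For well-definedness of (\ref{defpi}), suppose $s_1,s_2\in\cS$ both satisfy $as_i^{-1}\in\cX$. Lemma \ref{commonden} supplies $t\in\cS$ with $s_1t^{-1},s_2t^{-1}\in\cX$; writing $\varphi=\rho(t^{-1})\eta$ with $\eta\in\cD_\rho$ (Lemma \ref{drho}(iii)), the factorization $t^{-1}=s_i^{-1}(s_it^{-1})$ in $\cX$ together with the injectivity of $\rho(s_i^{-1})$ furnished by torsionfreeness yields $\rho(s_i^{-1})^{-1}\varphi=\rho(s_it^{-1})\eta$, whence $\rho(as_i^{-1})\rho(s_i^{-1})^{-1}\varphi=\rho(at^{-1})\eta$ independently of $i$. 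The inclusion $\pi_\rho(a)\cD_\rho\subseteq\cD_\rho$ is then immediate from Lemma \ref{drho}(ii). Linearity is analogous; multiplicativity $\pi_\rho(a)\pi_\rho(b)=\pi_\rho(ab)$ reduces to an identity in the $*$-algebra $\cX$ after using $(O)$ and Lemma \ref{commonden} to bring $a,b$ over a common denominator. For the ${*}$-property, Lemma \ref{commonden} produces a self-adjoint $t=t_0^*t_0\in\cS$ with $at^{-1},a^*t^{-1}\in\cX$; writing $\varphi=\rho(t^{-1})\eta$, $\psi=\rho(t^{-1})\zeta$ with $\eta,\zeta\in\cD_\rho$, both $\langle\pi_\rho(a)\varphi,\psi\rangle$ and $\langle\varphi,\pi_\rho(a^*)\psi\rangle$ collapse to $\langle\eta,\rho(t^{-1}a^*t^{-1})\zeta\rangle$ after transferring ${*}$'s inside $\cX$, using that $(at^{-1})^*=t^{-1}a^*\in\cX$. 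The formula $\pi_\rho(s)\varphi=\rho(s^{-1})^{-1}\varphi$ is immediate from (\ref{defpi}) with $s$ itself as denominator, and $\pi_\rho(s)\cD(\pi_\rho)=\cD(\pi_\rho)$ is exactly Lemma \ref{drho}(iii).

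To handle the graph topology I would use the countable sequence $\{t_n\}\subseteq\cS$ built in the proof of Lemma \ref{drho}. For any $a\in\cA$, write $a=xs$ with $x\in\cX,s\in\cS$; once $n$ is large enough that $s\in\cS^n$, we have $st_n^{-1}\in\cX$ and hence $at_n^{-1}=x(st_n^{-1})\in\cX$, giving $\|\pi_\rho(a)\varphi\|\leq\|\rho(at_n^{-1})\|\,\|\varphi\|_n$, where $\|\varphi\|_n:=\|\rho(t_n^{-1})^{-1}\varphi\|=\|\pi_\rho(t_n)\varphi\|$. Conversely, each $\|\cdot\|_n$ is itself a graph seminorm. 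Therefore the graph topology $t_{\pi_\rho}$ coincides with the projective-limit topology on $\cD_\rho=\bigcap_n E_n$ of the Hilbert-space tower $E_{n+1}\hookrightarrow E_n$ from Lemma \ref{drho}; this is a Fr\'echet topology, and the projective limit is complete, which is exactly closedness of $\pi_\rho$.

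For irreducibility I would prove the equality of bounded commutants $\rho(\cX)'=\pi_\rho(\cA)'$ in $B(\cH)$. If $T\in\rho(\cX)'$, commutation with each $\rho(s^{-1})$ implies $T\cD_\rho\subseteq\cD_\rho$, and (\ref{defpi}) then gives $T\pi_\rho(a)=\pi_\rho(a)T$. Conversely, if $T\in\pi_\rho(\cA)'$, commutation with $\pi_\rho(s)=\rho(s^{-1})^{-1}$ on $\cD_\rho$ propagates to commutation of $T$ with $\rho(s^{-1})$ on all of $\cH$ by density and boundedness; for arbitrary $x\in\cX$, writing $x=as^{-1}$ with $a\in\cA,s\in\cS$ via $\cX\subseteq\cA\cS^{-1}$ produces the identity $\rho(x)\varphi=\pi_\rho(a)\rho(s^{-1})\varphi$ on $\cD_\rho$, from which $T\rho(x)=\rho(x)T$ follows. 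Since both a bounded and a closed ${*}$-representation are irreducible iff their bounded commutant equals $\dC I$, the equivalence is established. The principal obstacle throughout is the algebraic bookkeeping with simultaneous Ore denominators: at every step one must check that the various products actually lie in $\cX$ (equivalently in $\cX\cS\subseteq\cA\cS_O^{-1}$, cf.\ Lemma \ref{reconO}) and that each cancellation of $\rho(s^{-1})$ genuinely uses torsionfreeness.
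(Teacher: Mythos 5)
Your proposal is correct and takes essentially the same route as the paper: well-definedness via a common denominator from Lemma \ref{commonden} and cancellation by torsionfreeness, the same algebraic verification of additivity, multiplicativity and the $*$-property, the tower $(E_n,\Vert\cdot\Vert_n)$ from the proof of Lemma \ref{drho} to identify the graph topology with a countable projective limit of Hilbert spaces (hence Fr\'echet and closed), and the commutant equality $\rho(\cX)'=\pi_\rho(\cA)'_s$ for the irreducibility statement. The only minor differences are cosmetic: the paper verifies the commutant identity on the generators $\cX_G$ where you use $\cX\subseteq\cA\cS^{-1}$ for arbitrary $x$, and your sketched multiplicativity step requires the same insertion of $\rho(t_1^{-1})^{-1}\rho(t_1^{-1})$ carried out explicitly in the paper, which your stated tools ($(O)$, Lemma \ref{commonden}, torsionfreeness) indeed cover.
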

\begin{proof}
We first show that the operator $\pi_\rho(a)$ is well-defined, that is, $\pi_\rho(a)$  (\ref{defpi}) does not depend on the
particular element $s$ of $\cS$ satisfying $as^{-1}\in\gX.$ Let
$\widetilde{s}\in \cS$ be another element such that $a\widetilde{s}^{-1}\in\gX.$
By Lemma \ref{commonden}  there exists $t\in\cS$ such that
$st^{-1}\in\gX$ and $\widetilde{s}t^{-1}\in\gX.$ Then
$at^{-1}=(as^{-1})(st^{-1})\in\gX.$ Let $r$ denote $s$ or
$\widetilde{s}.$ Writing $\varphi=\rho(t^{-1})\psi$ with
$\psi\in\cH,$ we compute
\begin{gather}\nonumber
\rho(ar^{-1})\rho(r^{-1})^{-1}\varphi=\rho(ar^{-1})\rho(r^{-1})^{-1}\rho(t^{-1})\psi=
\rho(ar^{-1})\rho(r^{-1})^{-1}\rho(r^{-1}rt^{-1})\psi=\\
=\rho(ar^{-1})\rho(rt^{-1})\psi=\rho(ar^{-1}rt^{-1})\rho(t^{-1})^{-1}\psi=\rho(at^{-1})\rho(t^{-1})^{-1}\varphi,
\end{gather}
so $\rho(as^{-1})\rho(s^{-1})^{-1}\varphi=\rho(a\widetilde{s}^{-1})\rho(\widetilde{s}^{-1})^{-1}\varphi$. This shows that the operator $\pi\rho(a)$ is well-defined.

Since $\rho(s^{-1})^{-1}\varphi\in\cD_\rho$ and
$\rho(as^{-1})\rho(s^{-1})^{-1}\varphi\in\cD_\rho$ by Lemma \ref{drho},(ii) and (iii), we have  $\pi_\rho(a)\varphi\in\cD_\rho,$ that is, $\pi_\rho(a)$
maps the domain $\cD(\pi_\rho)$ into itself.

Suppose that $a,b\in\cA.$ We shall prove that $\pi_\rho(a+b)=\pi_\rho(a)+\pi_\rho(b)$ and $\pi_\rho(ab)=\pi_\rho(a) \pi_\rho(b)$.

By (\ref{axs}) there are elements $s_1,s_2 \in \cS$ such that $as_1^{-1}\in \gX$ and $bs_2^{-1}\in \gX$. By Lemma \ref{commonden} we can find $s \in \cS$ such that $s_1s^{-1}\in \gX$ and $s_2s^{-1} \in \gX$. Since then $as^{-1} \in \gX$, $bs^{-1}\in \gX$ and $(a+b)s^{-1}\in \gX$, the relation
$$
\rho(as^{-1})\rho(s^{-1})^{-1}\varphi+\rho(bs^{-1})\rho(s^{-1})^{-1}\varphi=\rho((a+b)s^{-1})\rho(s^{-1})^{-1}\varphi,~\varphi \in \cD_\rho,
$$
says that $\pi_\rho(a+b)=\pi_\rho(a)+\pi_\rho(b)$.

From (\ref{axs}), there exist elements $t_1,t_2,t_3,t_4\in\cS$ such that
$at_1^{-1},bt_2^{-1},abt_3^{-1}\in\gX$ and $t_1bt_4^{-1}\in\gX.$
By Lemma \ref{commonden} there is an element $t\in\cS$ such that
$t_jt^{-1}\in\gX$ for $j=1,2,3,4.$ Then we have
$abt^{-1}=(abt_3^{-1})(t_3t^{-1})\in\gX,\
t_1bt^{-1}=(t_1bt_4^{-1})(t_4t^{-1})\in\gX$ and
$bt^{-1}=(bt_2^{-1})(t_2t^{-1})\in\gX.$ Let $\varphi\in\cD_\rho.$
Inserting the corresponding definitions of $\pi_\rho(ab),\pi_\rho(a)$ and $\pi_\rho(b)$ we derive
\begin{gather*}
\pi_\rho(ab)\varphi=\rho(abt^{-1})\rho(t^{-1})^{-1}\varphi=
\rho(at_1^{-1})\rho(t_1bt^{-1})\rho(t^{-1})^{-1}\varphi\\
=\rho(at_1^{-1})\rho(t_1^{-1})^{-1}\rho(t_1^{-1})\rho(t_1bt^{-1})\rho(t^{-1})^{-1}\varphi=
\pi_\rho(a)\rho(t_1^{-1}t_1bt^{-1})\rho(t^{-1})^{-1}\varphi\\
=\pi_\rho(a)\rho(bt^{-1})\rho(t^{-1})^{-1}\varphi=\pi_\rho(a)\pi_\rho(b)\varphi.
\end{gather*}

Finally, we verify that
$\langle\pi_\rho(a)\varphi,\psi\rangle=\langle\varphi,\pi_\rho(a^+)\psi\rangle$
for $a\in\cA$ and $\varphi,\psi\in\cD_\rho.$ From (\ref{axs}) and Lemma \ref{commonden} there
are elements $t_1,t_2  \in\cS$ and $t{=}t^* \in \cS$  such that $at_1^{-1},a^*t_2^{-1}\in\gX$ and
$t_1t^{-1}, t_2t^{-1}\in\gX.$ Since then $at^{-1}\in\gX$ and
$a^*t^{-1}\in\gX$, using that $\rho(t^{-1})$ is
bounded self-adjoint operator we compute
\begin{gather*}
\langle\pi_\rho(a)\varphi,\psi\rangle=\langle\rho(at^{-1})\rho(t^{-1})^{-1}\varphi,\psi\rangle=
\langle\rho(t^{-1})^{-1}\varphi,\rho((at^{-1})^*)\psi\rangle\\
=\langle\rho(t^{-1})^{-1}\varphi,\rho(t^{-1}a^*)\rho(t^{-1})\rho(t^{-1})^{-1}\psi\rangle=
\langle\rho(t^{-1})^{-1}\varphi,\rho(t^{-1}a^*t^{-1})\rho(t^{-1})^{-1}\psi\rangle\\
=\langle\rho(t^{-1})^{-1}\varphi,\rho(t^{-1})\rho(a^*t^{-1})\rho(t^{-1})^{-1}\psi\rangle=
\langle\rho(t^{-1})\rho(t^{-1})^{-1}\varphi,\pi_\rho(a^*)\psi\rangle=\langle\varphi,\pi_\rho(a^*)\psi\rangle,
\end{gather*}
where we used the fact that $\rho(t^{-1})\cD_\rho =\cD_\rho$ according to Lemma \ref{drho}(iii).

Clearly,  $\pi_\rho(\mathbf{1})\varphi{=}\varphi$ for
$\varphi\in\cD(\pi).$ Recall from Lemma \ref{drho}(i) that $\cD_\rho$ is dense in $\cH$. Thus, we have shown that $\pi_\rho$ is a
$*$-representation of $\cA$ on the dense domain $\cD_\rho$ of the Hilbert space $\cH$.

Let $s\in \cS$. Because $ss^{-1}{=}\mathbf{1} \in \gX$, we have $\pi_\rho(s)\varphi =\rho(s^{-1})^{-1}\varphi$ for $\varphi \in \cD_\rho$. Since $\rho(s^{-1})\cD_\rho=\cD_\rho$ by Lemma \ref{drho}(iii), it follows that $\pi_\rho(s)\cD(\pi_\rho)=\cD(\pi_\rho)$.

 To prove the assertion concerning the graph topology of $\pi_\rho$, we retain the notation from
 the proof of Lemma \ref{drho}(i). Since $\pi_\rho(t_n)\varphi =\rho(t_n^{-1})^{-1}\varphi$ for $\varphi \in \cD_\rho$  and $n \in \dN$, the graph seminorm $||\pi_\rho(t_n)\varphi||$ is just the norm $||\varphi ||_n$. Let $a \in \cA$. Applying once more (\ref{axs}) there is an element $t \in \cS$ such that $at^{-1} \in \gX$. We can find a number $n\in \dN$ such that $t\in \cS^n$. Since then
 $tt_n^{-1} \in \gX$, we have $at_n^{-1}=(at^{-1})(tt_n^{-1}) \in \gX$ and hence
 $$
 ||\pi_\rho(a)\varphi|| = ||\rho(at_n^{-1})\rho(t_n^{-1})^{-1} \varphi||=  ||\rho(at_n^{-1})\pi_\rho(t_n)\varphi||\leq ||\rho(at_n^{-1})||~ ||\varphi||_n.
$$
The preceding shows that the graph topology of $\pi_\rho$ is generated by the family of norms
$||\cdot ||_n$, $n \in \dN$. Hence it is the projective limit topology of the countable family of  Hilbert spaces $(E_n,||\cdot ||_n)$ on $E_\infty =\cap_n E_n =\cD_\rho$. Therefore, the graph topology of $\pi_\rho$ is metrizable and complete. The latter implies in particular that the representation $\pi_\rho$ is closed.

It remains to prove the assertion about the irreducibility. Recall that a $*$-representation $\pi_\rho$ is irreducible if and only $0$ and $I$ are the only projections in the strong commutant $\pi_\rho(\cA)^\prime_s$ (\cite{S1}, 8.3.5). Hence it suffices to show that $\pi_\rho(\cA)^\prime_s$ is equal to the commutant $\rho(\gX)^\prime$. Suppose that $T \in\pi_\rho(\cA)^\prime_s$. By definition $T$ maps $\cD(\pi_\rho)$ into itself and we have $T\pi_\rho(a)\varphi=\pi_\rho(a)T\varphi$ for all $a \in \cA$ and $\varphi \in \cD(\pi_\rho)$. Let $x \in \gX_G$ and $\psi \in \cD(\pi_\rho)$. Then $x$ is of the form $x=as^{-1}$ with $a \in \cA$ and $s \in \cS$ and $\varphi:=\rho(s^{-1})\psi$ belongs to $ \cD(\pi_\rho)$ by Lemma \ref{drho}(ii). Applying (\ref{defpi}) twice we derive
\begin{align*}
T\rho(x)\psi &=T\rho(as^{-1})\rho(s^{-1})^{-1}\varphi=T\pi_\rho(a)\varphi=\pi_\rho(a)T\varphi=\rho(as^{-1})\rho(s^{-1})^{-1}T\varphi\\ &=\rho(as^{-1})\pi_\rho(s)T\varphi =\rho(as^{-1})T\pi_\rho(s)\varphi =\rho(x)T\rho(s^{-1})^{-1}\varphi =\rho(x)T\psi.
\end{align*}
Since $\cD(\pi_\rho)$ is dense in $\cH$, $T\rho(x)=\rho(x)T$. Because $\gX_G$ generates the algebra $\cX$, $T$ is in the commutant $\rho(\gX)^\prime$. Conversely, if $T$ is in $\rho(\cX)^\prime$, it follows at once from the definitions (\ref{domrho}) of $\cD(\pi_\rho)$ and (\ref{defpi}) of $\pi_\rho$ that $T$ belongs to $\pi_\rho(\cA)^\prime_s$.
\end{proof}
\begin{remk}
The Mittag-Leffler lemma  used in the proof of Lemma \ref{drho} even states that $E_\infty=\cD(\pi_\rho)$ is dense in each Hilbert space $(E_n,||\cdot ||_n)$ for $n \in \dN$. This implies that $\cD(\pi_\rho)$ is core for each operator $\rho(s^{-1})^{-1}$ for $s\in \cS$.
\end{remk}
\begin{remk}\label{wellbeh} A fundamental
problem in unbounded representation theory of $*$-algebras is to select and to classify classes of "well-behaved" $*$-representations among the large variety of representations. An approach to this problem have been proposed in \cite{SS}. Fraction algebras give another possibility by defining
well-behaved $*$-representations of the $*$-algebra $\cA$ as those of the form $\pi_\rho$. Propositions \ref{schrod2} and \ref{cond3} below support such a definition.
\end{remk}
\begin{exam}\label{exam2}
Retain the notation of Example \ref{exam1} and assume that $\rho$ is torsionfree, that is, $E((0,0))=0$. Recall that $x=ba^{-1}$ in the $*$-algebra  $\cA\cS_O^{-1}$. For $\varphi \in \cD(\pi_\rho)=\cap_{n=1}^\infty \rho(a^n)\cH$ we have $\pi_\rho(x)\varphi=\rho(b)\rho(a)^{-1}\varphi$
and
\begin{align*}
\pi_\rho(q(x))\varphi=\int_\cC q(\mu \lambda^{-1} )~ dE(\lambda,\mu)\varphi,~~ q\in \dC[x].
\end{align*}
\end{exam}

\section{Abstract Strict Positivstellens\"atze}

In addition to condition $(O)$  we now essentially use the following assumption:\\
$(AB)$ {\textit{The $*$-algebra $\gX$ is algebraically bounded, that is, for each $x\in \cX$ there is a positive number $\lambda$ such that $\lambda {\cdot} 1-x^*x \in \sum \cX^2.$}}

Note that condition $(AB)$ implies that all $*$-representations of $\cX$ act by bounded operators.

The three theorems in this section are abstract strict Positivstellens\"atze for the $*$-algebra  $\cA$.
\begin{thm}\label{abstpos1}
Suppose that conditions (O) and (AB) are satisfied.
Let $a\in \cA_h$. Suppose there is an element $t \in \cS$ such that $t^{-1}a(t^*)^{-1} \in \gX$ and the following assumptions are fulfilled:\\
(i) For each irreducible $\cS^{-1}$-torsionsfree $*$-representation $\rho$ of $\gX$ on a Hilbert space $\cH(\rho)=\cD(\rho)$ there exists a bounded self-adjoint operator $T_\rho >0$ on $\cH(\rho)$ such that $ \pi_\rho(a) \geq T_\rho$ .\\
(ii) $ \rho_{tor}(t^{-1}a(t^*)^{-1}) >0$ for each irreducible $\cS^{-1}$-torsion $*$-representation $\rho_{tor}$ of $\gX$.\\
Then there exists an element $s\in \cS_O$ such that $s^*as \in \sum \cA^2$.
\end{thm}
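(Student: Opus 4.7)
The plan is to introduce the hermitian element $y := t^{-1}a(t^*)^{-1} \in \cX$, to prove $y \in \sum \cX^2$ by verifying positivity of $\rho(y)$ in every irreducible $*$-representation of $\cX$, and finally to clear denominators via the Ore property of $\cS_O$ in order to reach the desired relation $s^*as \in \sum \cA^2$.

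The first step will show $\rho(y) \geq 0$ for every bounded $*$-representation $\rho$ of $\cX$. Since $\cD_{tor}(\rho)$ is $\rho$-invariant, any bounded $\rho$ splits as $\rho_{tfr} \oplus \rho_{tor}$, and by general principles it suffices to consider irreducible $\rho$, each of which must therefore be either torsion or torsionfree. The torsion case is handled directly by assumption (ii). For an irreducible torsionfree $\rho$, I plan to invoke Theorem \ref{assrep} to produce the $*$-representation $\pi_\rho$ of $\cA$ on $\cD_\rho$ and to establish the operator identity
\[
\rho(y)\,\xi \;=\; \rho(t^{-1})\,\pi_\rho(a)\,\rho((t^*)^{-1})\,\xi, \qquad \xi \in \cD_\rho,
\]
which is the operator counterpart of $a = tyt^*$ in $\cA\cS_O^{-1}$. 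Using $\rho(t^{-1})^* = \rho((t^*)^{-1})$ and setting $\eta := \rho((t^*)^{-1})\xi \in \cD_\rho$, assumption (i) then yields
\[
\langle \rho(y)\xi, \xi\rangle \;=\; \langle \pi_\rho(a)\eta, \eta\rangle \;\geq\; \langle T_\rho\eta, \eta\rangle \;\geq\; 0,
\]
with strict inequality when $\xi \neq 0$, since torsionfreeness makes $\rho((t^*)^{-1})$ injective and $T_\rho > 0$. Density of $\cD_\rho$ together with boundedness of $\rho(y)$ then propagates this to $\rho(y) \geq 0$ on all of $\cH(\rho)$.

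Having obtained $\rho(y) \geq 0$ for every $*$-representation, I will invoke the Positivstellensatz for algebraically bounded $*$-algebras (Vidav--Schm\"udgen, cf.\ \cite{S3}) to conclude $y \in \sum \cX^2$. Writing $y = \sum_{j=1}^n y_j^* y_j$ with $y_j = c_j r_j^{-1} \in \cA\cS_O^{-1}$ and iterating the right Ore property on the finite set $\{r_1,\dots,r_n\}$ should produce $s \in \cS_O$ such that $r_j^{-1} t^* s \in \cA$ for every $j$. Then
\[
s^* a s \;=\; s^*\, t y t^*\, s \;=\; \sum_{j=1}^n (y_j t^* s)^* (y_j t^* s) \;\in\; \sum \cA^2,
\]
since each $y_j t^* s = c_j (r_j^{-1} t^* s)$ belongs to $\cA$.

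The main obstacle I anticipate is the rigorous justification of the operator identity $\rho(y) = \rho(t^{-1})\pi_\rho(a)\rho((t^*)^{-1})$ on $\cD_\rho$: although it merely reflects the algebraic fact $a = tyt^*$ in the Ore localization, transferring this identity from the formal fraction algebra to a genuine composition of (possibly unbounded) operators requires choosing common denominators via Lemma \ref{commonden} and tracking which intermediate products actually lie in $\cX$, so that the defining formula (\ref{defpi}) for $\pi_\rho$ can be applied unambiguously. The remaining steps---decomposition of $\rho$, invocation of the algebraic-boundedness Positivstellensatz, and the Ore-theoretic clearing of denominators---are essentially routine applications of the structural results already developed in Sections 2 and 3.
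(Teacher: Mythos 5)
Your overall skeleton agrees with the paper's: pass to $y:=t^{-1}a(t^*)^{-1}\in\cX_h$, prove positivity of $\rho(y)$ representation by representation (torsion case from (ii); torsionfree case through $\pi_\rho$ and the identity $\langle\rho(y)\varphi,\varphi\rangle=\langle\pi_\rho(a)\rho((t^*)^{-1})\varphi,\rho((t^*)^{-1})\varphi\rangle$ on $\cD_\rho$, which is exactly Lemma \ref{ybtlemma}), and finally clear denominators with the Ore property of $\cS_O$ to get $s^*as\in\sum\cA^2$; these outer steps are fine and match the paper's first and last paragraphs.

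The genuine gap is the middle step, which you treat as a citation. You propose to conclude $y\in\sum\cX^2$ from ``$\rho(y)\geq 0$ for every representation'' by invoking the Positivstellensatz for algebraically bounded $*$-algebras. That theorem (Remark \ref{trivcase}, i.e.\ \cite{S5}, Prop.\ 15) has the hypothesis $\rho(y)\geq\varepsilon_\rho$ for some $\varepsilon_\rho>0$ for each irreducible $\rho$; mere nonnegativity in all representations does \emph{not} yield $y\in\sum\cX^2$ but only the $\varepsilon$-perturbed statement (this is precisely the content of Theorem \ref{mar}(i)). And you cannot verify the needed hypothesis here: assumption (i) only provides a positive-definite $T_\rho$, so $\rho(y)$ may be positive definite with $0$ in its continuous spectrum, i.e.\ without any spectral gap; moreover your own propagation step deliberately keeps only $\rho(y)\geq 0$ on $\cH(\rho)$, discarding the strictness. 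What closes the argument in the paper is not a black-box citation but a separation argument for which positive definiteness (no gap) suffices: assuming $y\notin\sum\cX^2$, algebraic boundedness makes $1$ an algebraic interior point of the wedge $\sum\cX^2$ in $\cX_h$, Eidelheit's theorem produces a functional $F\not\equiv 0$ with $F(y)\leq 0$, $F\geq 0$ on $\sum\cX^2$, $F(1)=1$, which by Krein--Milman may be chosen extremal; its GNS representation $\rho_F$ is bounded by (AB) and irreducible, hence torsion or torsionfree, and in either case one shows $\langle\rho_F(y)\psi,\psi\rangle>0$ for all nonzero $\psi\in\cH(\rho_F)$ --- in the torsionfree case by extending the inequality against $T_{\rho_F}$ from $\cD_{\rho_F}$ to all of $\cH(\rho_F)$ by boundedness and using $\ker\rho_F((t^*)^{-1})=\{0\}$ --- which contradicts $F(y)=\langle\rho_F(y)\varphi_F,\varphi_F\rangle\leq 0$ for the nonzero cyclic vector $\varphi_F$. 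Note also that this is why one must extend the full inequality, not just nonnegativity, to $\cH(\rho_F)$: the cyclic vector $\varphi_F$ need not lie in $\cD_{\rho_F}$. Without this (or an equivalent) argument your plan does not reach $y\in\sum\cX^2$, while the steps you flagged as the main obstacle (the operator identity behind Lemma \ref{ybtlemma} and the denominator clearing) are in fact the routine ones.
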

The following simple lemma is used in the proofs of Theorems \ref{abstpos1} and \ref{mar}.
\begin{lemma}\label{ybtlemma}
Suppose that $b \in \cA$, $r\in \cS$ and $x:=r^{-1}b(r^*)^{-1} \in \cX$. Then for any $\cS^{-1}$-torsionfree $*$-representation $\rho$ of $\cX$ we have
\begin{align}\label{ybt}
\langle \rho(x) \varphi,\varphi \rangle=  \langle \pi_\rho(b)\rho((r^*)^{-1}) \varphi, \rho((r^*)^{-1})\varphi \rangle,~\varphi \in \cD(\pi_\rho).
\end{align}
\end{lemma}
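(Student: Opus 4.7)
The plan is to derive (\ref{ybt}) by pushing $\pi_\rho(b)$ through the factors $\rho((r^*)^{-1})$ and $\rho(r^{-1})$ using the definition (\ref{defpi}), the key formula $\pi_\rho(s)\psi = \rho(s^{-1})^{-1}\psi$ from Theorem \ref{assrep}, and the multiplicativity of $\rho$ on $\cX$.

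First, I would invoke (\ref{axs}) and Lemma \ref{commonden} to select a self-adjoint common denominator $t \in \cS$ with $bt^{-1}$ and $r^* t^{-1}$ both in $\cX$. The defining equality $b = rxr^*$ in $\cA\cS_O^{-1}$ then yields the crucial algebraic identity $r^{-1}bt^{-1} = xr^*t^{-1}$, both sides of which manifestly lie in $\cX$: the left because $r^{-1}, bt^{-1} \in \cX$, the right because $x, r^*t^{-1} \in \cX$.

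Next, for $\varphi \in \cD(\pi_\rho)$, set $\psi := \rho((r^*)^{-1})\varphi$, which lies in $\cD_\rho$ by Lemma \ref{drho}(ii). Since $r^{-1} \in \cS^{-1} \subseteq \cX$, left-multiplication by $\rho(r^{-1})$ combined with the $*$-homomorphism property of $\rho$ on $\cX$ gives
\[ \rho(r^{-1})\pi_\rho(b)\psi = \rho(r^{-1})\rho(bt^{-1})\rho(t^{-1})^{-1}\psi = \rho(r^{-1}bt^{-1})\rho(t^{-1})^{-1}\psi = \rho(x)\bigl[\rho(r^*t^{-1})\rho(t^{-1})^{-1}\psi\bigr]. \]
By Theorem \ref{assrep} (applied to $r^* \in \cS$ with denominator $t$), the bracketed expression equals $\pi_\rho(r^*)\psi = \rho((r^*)^{-1})^{-1}\psi = \varphi$. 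Hence $\rho(r^{-1})\pi_\rho(b)\rho((r^*)^{-1})\varphi = \rho(x)\varphi$, and taking the inner product with $\varphi$ while using $\rho(r^{-1})^* = \rho((r^*)^{-1})$ delivers (\ref{ybt}).

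The only obstacle is bookkeeping: at each step one must verify that the element whose $\rho$-image is being formed really lies in $\cX$, not merely in $\cA\cS_O^{-1}$, so that $\rho$ can legitimately be applied and factored across products. The careful choice of $t$ via Lemma \ref{commonden}, combined with $\cS^{-1} \subseteq \cX$ and the hypothesis $x \in \cX$, makes every such verification automatic.
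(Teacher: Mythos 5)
Your proof is correct and is essentially the paper's own argument: both proofs bring $b$ to a right denominator in $\cS$ so that the relevant fractions lie in $\cX$, regroup via the defining formula (\ref{defpi}) for $\pi_\rho$, use Lemma \ref{drho} and torsionfreeness to justify the vector manipulations, and move $\rho(r^{-1})$ across the inner product via $\rho(r^{-1})^*=\rho((r^*)^{-1})$. The only organizational difference is that the paper applies condition $(O)$ directly to the pair $(r,x)$ to write $rx=yt$ with $y=b(tr^*)^{-1}\in\cX$ and then runs a chain of inner-product equalities, whereas you obtain a common denominator through (\ref{axs}) and Lemma \ref{commonden}, prove the operator identity $\rho(r^{-1})\pi_\rho(b)\rho((r^*)^{-1})\varphi=\rho(x)\varphi$ using $\pi_\rho(r^*)\psi=\rho((r^*)^{-1})^{-1}\psi$ from Theorem \ref{assrep}, and then take adjoints.
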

\begin{proof}
By our assumption $(O)$ there are elements $t\in \cS$ and $y \in \cX$ such that $rx=yt=b(r^*)^{-1}$. If $\varphi \in \cD(\pi_{\rho})$, then $\psi:= \rho(t^{-1})^{-1}\varphi \in \cD(\pi_\rho)$
by Lemma \ref{drho}(iii).
Using these facts  we compute
\begin{align*}
&\langle \rho(x) \varphi,\varphi \rangle= \langle \rho(r^{-1}yt)\rho(t^{-1})\psi,\varphi \rangle = \langle \rho(r^{-1}y)\psi,\varphi \rangle =
\langle \rho(y)\psi,\rho((r^*)^{-1})\varphi \rangle =\\ &\langle \rho(b(tr^*)^{-1}) \psi, \rho((r^*)^{-1})\varphi \rangle
= \langle \pi_\rho(b)\rho((tr^*)^{-1}) \psi, \rho((r^*)^{-1})\varphi \rangle \\&=\langle \pi_\rho(b)\rho((r^*)^{-1}) \rho(t^{-1})\psi, \rho((r^*)^{-1})\varphi \rangle =\langle \pi_\rho(b)\rho((r^*)^{-1}) \varphi, \rho((r^*)^{-1})\varphi \rangle
\end{align*}
where the fifth equality follows from formula (\ref{defpi}), because we have $y=b (tr^*)^{-1} \in \gX$.
\end{proof}
\noindent
Proof of Theorem \ref{abstpos1}:

 Set $y:=t^{-1}a(t^*)^{-1}$. Our first aim is to show that $y \in \sum \gX^2$. The proof of this assertion is based on a now standard separation argument which is has been first used in  \cite{S2}, see e.g. Sections 5.1 and 5.2 in \cite{S5} for the noncommutative case.

 Assume to the contrary that $y$ is not in $\sum \gX^2$.
Since $\gX$ is algebraically bounded by assumption (AB), the unit element $1$ of $\gX$ is an algebraic inner point of the wedge $\sum~\gX^2$ of the real vector space $\gX_h$. Therefore, by Eidelheit's separation theorem (see e.g. \cite{J}, 0.2.4), there exists an $\dR$-linear functional $F\not\equiv 0$ on $\gX_h$ such that $
F(y)\leq 0$ and $F(\sum \gX^2) \geq 0$. There is no loss of generality to assume that $F(1)=1$. By a standard application of the Krein-Milman theorem (see e.g. \cite{J}, 0.3.6 and 1.8.3) it follows that this functional $F$ can be choosen to be extremal (that is, if $G$ is another $\dR$-linear functional on $\cX_h$ such that $G(y)\leq 0$, $G(1)=1$ and $F(x)\geq G(x)\geq 0$ for all $x\in \sum \cX^2$, then $G=F$). We extend $F$ to a $\dC$-linear functional, denoted also by $F$, on $\gX$. Then $F$ is an extremal state of the $*$-algebra $\gX$. Let $\rho_F$ be the $*$-representation of $\gX$ which is associated with $F$ by the GNS-construction. Using once more that $\cX$ is algebraically bounded, it follows that all operators $\rho_F(x)$, $x \in \gX$, are bounded, so we can assume that $\cD(\rho_F)=\cH(\rho_F)$. Since the state $F$ of $\cX$ is extremal, $\rho_F$ is irreducible. Therefore, by the decomposition of $\rho_F$ discussed in Section 2, $\rho_F$ is either an $\cS^{-1}$-torsion or an $\cS^{-1}$-torsionfree $*$-representation.

The crucial step of this proof is to show that $\rho_F(y) >0$. If $\rho_F$ is torsion, then we have $\rho_F(y) >0$  by assumption (ii). Now we suppose that $\rho:=\rho_F$ is torsionfree. Combining equation (\ref{ybt}) in Lemma \ref{ybtlemma}, applied with $a=b$, $r=t$, $x=y$,  and the assumption $\pi_{\rho}(a)\geq T_{\rho}$, we obtain
\begin{align}\label{yat}
&\langle \rho(y) \varphi,\varphi \rangle
\geq \langle T_{\rho}  \rho((t^*)^{-1})\varphi, \rho((t^*)^{-1})\varphi \rangle
\end{align}
for $\varphi \in \cD(\pi_\rho)$. Because $\rho(y)$, $T_\rho$ and $\rho((t^*)^{-1})$ are bounded operators and $\cD(\pi_\rho)$ is dense in $\cH(\rho)$ by Lemma \ref{drho}, it follows that equation (\ref{yat}) holds for arbitrary vectors $\varphi \in \cH(\rho)$. Since $T_\rho >0$ and $\ker~\rho((t^*)^{-1})=\{0\}$  because $\rho=\rho_F$ is torsionfree,  (\ref{yat}) implies that $\rho_F(y)>0$.

Thus we have  $\rho_F(y)>0$ as just shown and $F(y)\leq 0$ by construction. Since $F\not\equiv 0$, this is the desired contraction. Therefore,  $y \in \sum \gX^2$.

 We write $y$ as a finite sum $\sum_i y_i^*y_i$ with $y_i \in \gX$. From the Ore property of the set $\cS_O$ it follows that for all elements $y_it^* \in \cA\cS_O^{-1}$ there is a common right denominator, that is, there exist elements $s \in \cS_O$ and $a_i \in \cA$ such that $y_it^*=a_is^{-1}$ for all $i$. Then $y =t^{-1}a(t^*)^{-1} =\sum_i y_i^*y_i$ implies that $a = \sum_i (s^*)^{-1}a_i^* a_i s^{-1}$ and so  $s^*as = \sum_i a_i^*a_i \in \sum \cA^2$. $\hfill \Box$

\medskip
 Assuming the stronger condition $(IA)$ instead of $(O)$ we have the following stronger result.
\begin{thm}\label{abstpos2}
Asumme that conditions $(IA)$ and $(AB)$ are satisfied.
Let $a\in \cA_h$. Suppose there is an element $t \in \cS$ such that $t^{-1}a(t^*)^{-1} \in \gX$ and the following assumptions are fulfilled:\\
(i) For each irreducible $\cS^{-1}$-torsionfree $*$-representation $\rho$ of $\gX$ on a Hilbert space $\cH(\rho)=\cD(\rho)$  there exists a bounded self-adjoint operator $T_\rho >0$ on $\cH(\rho)$ such that $ \pi_\rho(a) \geq T_\rho$ .\\
(ii) $ \rho_s(t^{-1}a(t^*)^{-1}) >0$ for each irreducible $*$-representation $\rho_s$ of the $*$-algebra $\gX_s$ and $s\in \cS_G$.\\
Then there exists an element $s\in \cS_O$ such that $s^*as \in \sum \cA^2$.
\end{thm}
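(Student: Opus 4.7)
The plan is to mimic the proof of Theorem \ref{abstpos1} essentially verbatim up to the point where the GNS representation $\rho_F$ is shown to split into torsion and torsionfree parts, and then exploit the sharper decomposition available under (IA) (namely Lemma \ref{decomp}) to handle the torsion case in a way compatible with assumption (ii), which here concerns only the quotient algebras $\cX_s$ for $s\in\cS_G$ rather than all torsion representations.

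First I set $y:=t^{-1}a(t^*)^{-1}\in\cX_h$ and aim to prove $y\in\sum\cX^2$. Assuming the contrary, I invoke (AB) to conclude that $1$ is an algebraic inner point of the wedge $\sum\cX^2\subseteq\cX_h$, apply Eidelheit separation together with a Krein--Milman selection exactly as in the proof of Theorem \ref{abstpos1}, and obtain an extremal state $F$ of $\cX$ with $F(1)=1$, $F(y)\leq 0$ and $F(\sum\cX^2)\geq 0$. The GNS representation $\rho_F$ is then an irreducible bounded $*$-representation of $\cX$ (bounded thanks to (AB)).

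The key novelty compared with Theorem \ref{abstpos1} is the use of Lemma \ref{decomp}, which is available because we now assume (IA). That lemma decomposes the bounded representation $\rho_F$ as an orthogonal direct sum $\rho_{tfr}\oplus\bigoplus_{s\in\cS_G}\rho_s$, where each $\rho_s$ factors through $\cX_s$. Since $\rho_F$ is irreducible, exactly one summand is nonzero, so either $\rho_F$ is torsionfree or $\rho_F$ factors through $\cX_s$ for some $s\in\cS_G$. In the first case, I combine assumption (i) with the identity of Lemma \ref{ybtlemma} (applied to $b=a$, $r=t$, $x=y$) to obtain
\begin{equation*}
\langle\rho_F(y)\varphi,\varphi\rangle\geq\langle T_{\rho_F}\rho_F((t^*)^{-1})\varphi,\rho_F((t^*)^{-1})\varphi\rangle
\end{equation*}
for $\varphi\in\cD(\pi_{\rho_F})$, extend by density of $\cD(\pi_{\rho_F})$ (Lemma \ref{drho}(i)) to all of $\cH(\rho_F)$, and conclude $\rho_F(y)>0$ from $T_{\rho_F}>0$ and $\ker\rho_F((t^*)^{-1})=\{0\}$. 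In the second case, $\rho_F$ descends to an irreducible $*$-representation of $\cX_s$ for some $s\in\cS_G$, and assumption (ii) directly yields $\rho_F(y)>0$. Either way $F(y)>0$, contradicting $F(y)\leq 0$; hence $y\in\sum\cX^2$.

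Finally, writing $y=\sum_i y_i^*y_i$ with $y_i\in\cX$, I bring the finitely many fractions $y_it^*\in\cA\cS_O^{-1}$ to a common right denominator using the right Ore property of $\cS_O$: there exist $s\in\cS_O$ and $a_i\in\cA$ with $y_it^*=a_is^{-1}$ for all $i$. Substituting into $t^{-1}a(t^*)^{-1}=\sum_i y_i^*y_i$ and multiplying by $s^*$ on the left and $s$ on the right yields $s^*as=\sum_i a_i^*a_i\in\sum\cA^2$, which is the desired conclusion. The only conceptually nontrivial step is the reduction to the two irreducible cases via Lemma \ref{decomp}; the rest is parallel to Theorem \ref{abstpos1}, and I expect that the main obstacle (already handled by that lemma) is precisely to ensure that the torsion part of an irreducible $\rho_F$ is supported on a single $\cX_s$ with $s\in\cS_G$, which is exactly what (IA) buys us.
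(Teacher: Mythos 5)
Your proposal is correct and follows essentially the same route as the paper: the paper simply observes that $(IA)$ gives $(O)$ (Lemma \ref{A12}) and that, by Lemma \ref{decomp}, every irreducible torsion representation factors through some $\cX_s$ with $s\in\cS_G$, so that assumption (ii) here implies assumption (ii) of Theorem \ref{abstpos1}, and then cites that theorem. You merely inline the separation/GNS/Ore argument of Theorem \ref{abstpos1} instead of invoking it as a black box; the key step --- using Lemma \ref{decomp} to reduce the torsion case to a single quotient $\cX_s$ --- is identical.
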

\begin{proof}
Since condition $(IA)$ holds by assumption, $(O)$ is satisfied by Lemma \ref{A12} and each torsion $*$-representation $\rho_{tor}$ is a direct sum of representations $\rho_s$ of $\cX$ such that $\rho_s(s^{-1})=0$ for $s \in \cS_G$ by Lemma  \ref{decomp}. Therefore, assumption (ii) above implies assumption (ii) of Theorem \ref{abstpos1}, so the assertion follows from Theorem \ref{abstpos1}.
\end{proof}
\begin{remk} Let $a$ be an element of $\cA$ satisfiying assumption (i) of Theorems \ref{abstpos1} or \ref{abstpos2}. By (\ref{axs}) there exists $t \in \cS$ such that $t^{-1}a(t^*)^{-1} \in \gX$.  Moreover, if $t^{-1}a(t^*)^{-1} \in \gX$ and $s \in \cS_G$, then $(st)^{-1}a((st)^*)^{-1} \in \cJ_s$ and so $\rho_s((st)^{-1}a((st)^*)^{-1})=0$ for any $*$-representation $\rho_s$ of $\cX_s$. Hence it is crucial in  both theorems  to find an element $t$ for which assumption (ii) holds as well.
\end{remk}
\begin{remk}\label{trivcase}
 Let us consider the trivial case when $\cS=\{1\}$. Then we have $\cA=\cX$ and $(O)$ trivially holds. Hence  Theorem \ref{abstpos1} gives the following asssertion (see e.g. \cite{S5}, Proposition 15) for an \textit{algebraically bounded} $*$-algebra $\cX$:  \textit{Let $a\in\cX_h$. If for each irreducible $*$-representation $\rho $ of $\cX$ there is a positive  number $\varepsilon $ such that $\rho(a) \geq \varepsilon $, then $a \in \sum \cX^2$.}
\end{remk}
The next theorem works only with representations $\pi_\rho$ of $\cA$. It can be considered as a non-commutative version of M. Marshall's extension of the Archimedean Positivstellensatz to noncompact semi-algebraic sets \cite{M2}.
\begin{thm}\label{mar}
Assume that $(O)$ and $(AB)$ hold. Let $a \in \cA_h$ and $t\in \cS$ be such that $y:=t^{-1}a(t^*)^{-1} \in \gX$. Then we have: \\
(i) If $\pi_\rho(a) \geq 0$ for all irreducible $\cS^{-1}$-torsionfree $*$-representations $\rho$ of $\cX$, then for each $\varepsilon >0$ there exists $s_\varepsilon \in \cS_O$ such that
$s_\varepsilon^*(a+ \varepsilon tt^*)s_\varepsilon \in \sum \cA^2$.\\
(ii) If for any $\varepsilon>0$ there is an element $s_\varepsilon \in \cS$ such that
$s_\varepsilon(a+ \varepsilon tt^*)s_\varepsilon^* \in \sum \cA^2$, then $\pi_\rho(a)\geq 0$ for all $\cS^{-1}$-torsionfree $*$-representations $\rho$ of $\cX$.
\end{thm}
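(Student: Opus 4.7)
The plan is to handle the two parts separately, starting with the easier direction.

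For part (ii), fix any $\cS^{-1}$-torsionfree $*$-representation $\rho$ of $\cX$ and a vector $\psi \in \cD(\pi_\rho)$. Given $\varepsilon > 0$, the hypothesis supplies $s_\varepsilon \in \cS$ with $s_\varepsilon(a+\varepsilon tt^*)s_\varepsilon^* \in \sum \cA^2$, so the operator $\pi_\rho(s_\varepsilon(a+\varepsilon tt^*)s_\varepsilon^*)$ is nonnegative on $\cD(\pi_\rho)$. Since $\cS$ is $*$-invariant, $s_\varepsilon^* \in \cS$, and Theorem \ref{assrep} yields $\pi_\rho(s_\varepsilon^*)\cD(\pi_\rho) = \cD(\pi_\rho)$; choose $\varphi \in \cD(\pi_\rho)$ with $\pi_\rho(s_\varepsilon^*)\varphi = \psi$. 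Factoring $\pi_\rho$ as a $*$-homomorphism and using $\pi_\rho(s_\varepsilon)^* = \pi_\rho(s_\varepsilon^*)$ produces
\begin{gather*}
0 \leq \langle \pi_\rho(s_\varepsilon(a+\varepsilon tt^*)s_\varepsilon^*)\varphi,\varphi\rangle = \langle \pi_\rho(a)\psi,\psi\rangle + \varepsilon\,\|\pi_\rho(t^*)\psi\|^2.
\end{gather*}
Since $\|\pi_\rho(t^*)\psi\|$ does not depend on $\varepsilon$, letting $\varepsilon \to 0^+$ with $\psi$ fixed gives $\langle \pi_\rho(a)\psi,\psi\rangle \geq 0$, so $\pi_\rho(a) \geq 0$.

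For part (i) I would apply Theorem \ref{abstpos1} to the hermitian element $a' := a + \varepsilon tt^*$ with the same denominator $t$; note that $t^{-1}a'(t^*)^{-1} = y + \varepsilon \cdot 1 \in \cX$. To verify hypothesis (i) of Theorem \ref{abstpos1}, take any irreducible $\cS^{-1}$-torsionfree $*$-representation $\rho$. The assumption $\pi_\rho(a) \geq 0$ combined with $\pi_\rho(t^*) = \rho((t^*)^{-1})^{-1}$ from Theorem \ref{assrep} yields, for $\varphi \in \cD(\pi_\rho)$,
\begin{gather*}
\langle \pi_\rho(a')\varphi,\varphi\rangle \geq \varepsilon\,\|\pi_\rho(t^*)\varphi\|^2 \geq \varepsilon\,\|\rho((t^*)^{-1})\|^{-2}\,\|\varphi\|^2,
\end{gather*}
the second estimate using $\|\varphi\| \leq \|\rho((t^*)^{-1})\|\cdot\|\pi_\rho(t^*)\varphi\|$, a consequence of $\varphi = \rho((t^*)^{-1})\pi_\rho(t^*)\varphi$. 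Hence $T_\rho := \varepsilon\,\|\rho((t^*)^{-1})\|^{-2}\cdot I$ is a bounded self-adjoint strictly positive lower bound for $\pi_\rho(a')$. Once hypothesis (ii) of Theorem \ref{abstpos1} is also confirmed, the Ore common-denominator argument at the end of the proof of Theorem \ref{abstpos1} converts the resulting $y + \varepsilon \in \sum \cX^2$ into the desired element $s_\varepsilon \in \cS_O$ with $s_\varepsilon^*(a+\varepsilon tt^*)s_\varepsilon \in \sum \cA^2$.

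The main obstacle is hypothesis (ii) of Theorem \ref{abstpos1}: $\rho_{tor}(y) + \varepsilon I > 0$ for every irreducible $\cS^{-1}$-torsion $*$-representation $\rho_{tor}$. My intended route is a Baire-category reduction. Under $(AB)$ each operator $\rho_{tor}(s^{-1})$ is bounded with closed kernel, and the torsion property gives the countable union $\cH(\rho_{tor}) = \bigcup_{s \in \cS}\ker\rho_{tor}(s^{-1})$; Baire then forces $\ker\rho_{tor}(s^{-1}) = \cH(\rho_{tor})$ for some $s \in \cS$, i.e., $\rho_{tor}(s^{-1}) = 0$, so $\rho_{tor}$ factors through $\cX_s$. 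Replacing $s$ by $s' := ts$ if needed (still annihilated, since $(ts)^{-1} = s^{-1}t^{-1}$ and $\rho_{tor}(s^{-1}) = 0$) and exploiting the Ore identities from $(O)$ to migrate the $t^{-1}$ appearing on the left of $y$, the target is to rewrite $y = t^{-1}a(t^*)^{-1}$ as an element of the $*$-ideal $\cJ_{s'}$, whence $\rho_{tor}(y) = 0$ and the perturbation alone delivers $\rho_{tor}(y + \varepsilon) = \varepsilon I > 0$. Making this rewriting fully explicit using only $(O)$, rather than the stronger $(IA)$ that drives the cleaner torsion decomposition in Lemma \ref{decomp} behind Theorem \ref{abstpos2}, is where the main technical work lies.
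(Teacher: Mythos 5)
Your part (ii) is correct and is essentially the paper's argument in a more hands-on form: instead of invoking Lemma \ref{ybtlemma} (as the paper does, applying (\ref{ybt}) with $b=s_\varepsilon t(y+\varepsilon)(s_\varepsilon t)^*$ and then with $b=a$), you use $\pi_\rho(s_\varepsilon^*)\cD(\pi_\rho)=\cD(\pi_\rho)$ from Theorem \ref{assrep} and the $*$-representation property directly; both routes are fine.

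Part (i) has a genuine gap, and your proposed way of closing it would fail. Routing the proof through Theorem \ref{abstpos1} applied to $a+\varepsilon tt^*$ forces you to verify its hypothesis (ii), i.e.\ $\rho_{tor}(y+\varepsilon)>0$ for every irreducible $\cS^{-1}$-torsion representation, and this simply does not follow from the hypothesis of Theorem \ref{mar}(i), which says nothing about torsion representations. Your plan is to get it from $\rho_{tor}(y)=0$, but that statement is false in general: in Example \ref{exam1} take $t=s=x^2+1$ and the hermitian element $x^4\in\cA=\dC[x]$ (which is $\geq 0$ in every torsionfree representation, so the hypothesis of \ref{mar}(i) holds); then $y=s^{-1}x^4s^{-1}=(1-s^{-1})^2\in\cX$, and the unique irreducible torsion representation, the character annihilating $s^{-1}$ and $xs^{-1}$, sends $y$ to $1\neq 0$. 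So while your Baire argument correctly produces some $s\in\cS$ with $\rho_{tor}(s^{-1})=0$, the element $y=t^{-1}a(t^*)^{-1}$ need not lie in the corresponding ideal $\cJ_{s}$ (nor in $\cJ_{ts}$), and under the stated hypotheses nothing beyond a norm bound is known about $\rho_{tor}(y)$; hence $\rho_{tor}(y+\varepsilon)>0$ cannot be extracted for small $\varepsilon$ along these lines.

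The paper proves (i) by a different and shorter route that never mentions Theorem \ref{abstpos1}'s torsion hypothesis: from (\ref{ybt}) with $b=a$, $r=t$ one gets $\rho(y)\geq 0$, hence $\rho(y+\varepsilon)\geq\varepsilon$, for irreducible torsionfree $\rho$; then the Positivstellensatz for algebraically bounded $*$-algebras quoted in Remark \ref{trivcase} is applied to the single element $y+\varepsilon\in\cX_h$ to conclude $y+\varepsilon\in\sum\cX^2$; finally the Ore common-denominator step from the last paragraph of the proof of Theorem \ref{abstpos1} converts this into $s_\varepsilon^*(a+\varepsilon tt^*)s_\varepsilon\in\sum\cA^2$ with $s_\varepsilon\in\cS_O$ --- exactly the conversion you anticipated. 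You have, however, put your finger on the sensitive point: Remark \ref{trivcase} quantifies over \emph{all} irreducible representations of $\cX$, and the paper's own proof checks only the torsionfree ones when invoking it; but whatever one thinks of that step, your specific repair ($\rho_{tor}(y)=0$) is not available, so your version of (i) cannot be completed as proposed and you should follow (or explicitly address) the paper's reduction to Remark \ref{trivcase} instead.
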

\begin{proof}
(i): Suppose that $\varepsilon >0$. Since $\pi_\rho(a)\geq 0$ , we conclude from equation (\ref{ybt}), applied with $b=a$, $r=t$, that $\rho(y)\geq 0$ on $\cD(\pi_\rho)$ and by the density of $\cD(\pi_\rho)$ on $\cH(\rho)$. Thus $y+ \varepsilon $ satisfies the assumption of the Positivstellensatz in Remark \ref{trivcase}.
 Therefore, $y + \varepsilon \in \cX^2$, that is, $y+\varepsilon =\sum_i y_i^*y_i$ where $y_i \in \cX$. Proceeding as in the last paragraph of the proof of Theorem \ref{abstpos1}, there exist elements $s_\varepsilon \in \cS_0$ and $a_i\in \cA$ such that $y_it^*=a_is_\varepsilon^{-1}$ for all $i$ and we get $s_\varepsilon t(y+\varepsilon)(s_\varepsilon t)^*= s_\varepsilon^*(a+ \varepsilon tt^*)s_\varepsilon =\sum_i a_i^*a_i \in \sum \cA^2$.

(ii): Assume that $s_\varepsilon \in \cS$ and $c:=s_\varepsilon t(y+\varepsilon)(s_\varepsilon t)^*=s_\varepsilon(a+ \varepsilon tt^*)s_\varepsilon^* \in \sum \cA^2$. Therefore, since $\pi_\rho$ is $*$-representation of $\cA$, $\pi_\rho(c)\geq 0$. Equation  (\ref{ybt}), applied with $b=c$, $r=s_\varepsilon t$, $x=y+\varepsilon$, yields that $\rho(y+\varepsilon)\geq 0$ on $\cD(\pi_\rho)$ and so on $\cH(\rho)$. Since $\varepsilon >0$ was arbitrary, we have $\rho(y) \geq 0$. Combining the latter with identity (\ref{ybt}), now applied with $b=a$, $r=s_\varepsilon$, $x=y$, it follows that $\pi_\rho(a)\geq 0$.
\end{proof}

\section{Multi-Graded $*$-Algebras}

In this section we assume that the $*$-algebra $\cA$ has a multi-degree map
$d:\cA\backslash\set{0}\to\dN_0^k$ satisfying the following
conditions for arbitrary non-zero $a,b\in\cA$ and $\lambda\in\dC:$

$(d1)$ $d(\lambda a)=d(a)$ and $d(a+b)\leq d(a)\vee d(b),$

$(d2)$ $d(ab)=d(a)+d(b),$

$(d3)$ $d(a^*)=d(a)$,\\
where $a+b\neq 0$ in $(d1)$ and  we use the following notations for multi-indices $\gn{=}(n_1,\dots,n_k),$\\
$\gm{=}\{ m_1 ,\dots,m_k)\in\dZ^k:$
$\gn\vee\gm=(\max(m_1,n_1),\dots,\max(m_k,n_k))$,\\
$\gn\leq\gm$ if $n_1\leq m_1,\dots,n_k\leq m_k$,
$\gn<\gm$ if $n_1<m_1,\dots, n_k<m_k.$

We extend the map $d$ to a multi-degree map $d$ of $\cA\cS_O^{-1}$
to $\dZ^k$ by putting $d(as^{-1})=d(a)-d(s)$ for
$a\in\cA\backslash\set{0}$ and $s\in\cS_O.$ It is straightforward
to check that $d$ is well-defined and that conditions $(d1)$--$(d3)$ hold for the algebra $\cA\cS_O^{-1}$ as well.

Further, we suppose that the following conditions are valid:

$(A3)$ {\textit{$d([a,s])\leq d(a)$ for all $s \in \cS_G$ and $a \in \cA_G$.}}

$(A4)$ {\textit{$as^{-1} \in \cX$ for all $s \in \cS_G$ and $a \in \cA$ such that $d(a)\leq d(s)$.}}

$(A5)$ {\textit{For $a \in \cA$ and $\gn,\gk \in \dN_0^k$ such that $d(a)\leq \gn+\gk$ there exist finitely many elements
$b_i,c_i \in \cA$ satisfying $d(b_i)\leq \gn$, $d(c_i)\leq \gk $ for all $i$ and $a=\sum_{i}b_ic_i$.}

\begin{lemma}\label{a4}
$(i)$ $d([a,s])\leq d(a)$ for  $s \in \cS_G$ and $a \in \cA$.\\
$(ii)$ $s^{-1}at^{-1} \in \cX$ for $s,t \in \cS$ and $a \in \cA$ such that $d(a)\leq d(st)$.\\
$(iii)$ $a(ts)^{-1}b \in \cX$ for  $s,t \in \cS_G$ and $a,b \in \cA$ such that $d(ab)\leq d(ts)$, $d(s){=}d(t)$ and $d(a)\leq d(s)$.\\
$(iv)$ If $\cS=\cS_O$, then $as^{-1}b \in \cX$ for $s \in \cS$ and $a,b \in \cA$ such that $d(ab)\leq d(s)$.
\end{lemma}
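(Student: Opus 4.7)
I plan to prove (i) first by extending $(A3)$ via Leibniz, and then to combine (i), $(A4)$, and $(A5)$ with commutator and Ore-theoretic manipulations to obtain (ii)--(iv). For (i): each $a\in\cA$ is a linear combination of products $p = a_1\cdots a_n$ of elements of $\cA_G$, and for such a product Leibniz gives
\[
[p, s] \;=\; \sum_{k=1}^n a_1\cdots a_{k-1}\,[a_k, s]\,a_{k+1}\cdots a_n,
\]
so each summand has multi-degree at most $d(p)$ by $(A3)$ and $(d2)$. Working with the filtration $\cA^{(\gn)}:=\{a\in\cA:d(a)\leq\gn\}\cup\{0\}$ and using $(d1)$, one shows $[\cA^{(\gn)}, s]\subseteq\cA^{(\gn)}$; specializing $\gn = d(a)$ yields the claim.

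The crux of (ii)--(iv) is the one-sided auxiliary statement
\[
(\sharp)\qquad c\in\cA,\ s\in\cS,\ d(c)\leq d(s)\ \Longrightarrow\ cs^{-1}\in\gX,
\]
together with its $*$-adjoint $s^{-1}c\in\gX$ (which follows from the $*$-invariance of $\gX$). Once $(\sharp)$ is in hand, (ii) is immediate: decompose $a = \sum_j b_j c_j$ via $(A5)$ with $d(b_j)\leq d(s)$ and $d(c_j)\leq d(t)$, whence
\[
s^{-1}a t^{-1} \;=\; \sum_j (s^{-1}b_j)(c_j t^{-1}) \;\in\; \gX\cdot\gX \;=\; \gX.
\]
Part (iv) reduces to $(\sharp)$ via the assumption $\cS = \cS_O$: the right Ore condition of $\cS$ in $\cA$ produces $r\in\cS$ and $c\in\cA$ with $s^{-1}b = cr^{-1}$, and since $d(ac) = d(a) + d(b) + d(r) - d(s)\leq d(r)$ by $d(ab)\leq d(s)$, one gets $as^{-1}b = acr^{-1}\in\gX$ from $(\sharp)$.

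Part (iii) works directly without $(\sharp)$: expand $a(ts)^{-1}b = as^{-1}t^{-1}b$, split $b$ via $(A5)$ as $\sum_j b_j^{(1)}b_j^{(2)}$ with $d(b_j^{(1)})\leq d(s)-d(a)$ and $d(b_j^{(2)})\leq d(t)$, and push $b_j^{(1)}$ past $t^{-1}$ and then past $s^{-1}$ using the identity
\[
\xi^{-1}\eta \;=\; \eta\xi^{-1} + \xi^{-1}[\eta, \xi]\,\xi^{-1}.
\]
Every resulting term is either of the form $(ab_j^{(1)})s^{-1}\cdot(t^{-1}b_j^{(2)})$ --- which is in $\gX$ by $(A4)$ since $d(ab_j^{(1)})\leq d(s)$ --- or a correction involving $[b_j^{(1)}, s]$ or $[b_j^{(1)}, t]$, whose degree is controlled by (i); the assumption $d(s) = d(t)$ ensures these corrections still satisfy the hypothesis of $(A4)$, and therefore also factor as products of elements of $\gX$.

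The main obstacle is the proof of $(\sharp)$, which I would carry out by induction on the word-length $m$ of $s = s_1\cdots s_m$ in $\cS_G$. The base case $m = 1$ is $(A4)$. For $m\geq 2$, write $s = s_1 s'$ with $|s'| = m-1$; apply $(A5)$ to split $c = \sum_j b_j e_j$ with $d(b_j)\leq d(s_1)$ and $d(e_j)\leq d(s')$; by the inductive hypothesis $e_j (s')^{-1}\in\gX$, and one must then show that $b_j\cdot (e_j(s')^{-1})\cdot s_1^{-1}\in\gX$. This last step is the delicate point: pushing the denominator $s_1^{-1}$ past the element $b_j\in\cA$ introduces terms lying a priori in $\cA\cdot\gX$, which is not contained in $\gX$ in general. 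Overcoming this requires repeated use of the commutator identity above, part (i) to keep every intermediate commutator within the degree bounds demanded by $(A4)$, and the algebra closure of $\gX$ together with $\cS^{-1}\subseteq\gX$ to reassemble the pieces inside $\gX$.
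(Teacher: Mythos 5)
Your part (i), your reduction of (ii) to the one-sided statement $(\sharp)$ via $(A5)$ and $*$-invariance, and your part (iv) all coincide with the paper's argument. The problem is that $(\sharp)$ itself --- which is the entire substance of the lemma --- is not proved: you label the inductive step ``the delicate point'' and only assert that repeated commutator identities, part (i) and the closure of $\gX$ will ``reassemble the pieces inside $\gX$'', without exhibiting an identity that actually produces only terms of the form $(\text{element of }\gX)(\text{element of }\gX)$ and terminates. Moreover, the way you set up the induction makes this step genuinely awkward: peeling the generator off the left, $s=s_1s'$, and splitting $c=\sum_j b_je_j$ with $d(b_j)\leq d(s_1)$, $d(e_j)\leq d(s')$ leaves you with $b_j\,\bigl(e_j(s')^{-1}\bigr)\,s_1^{-1}$, where $b_j\in\cA$ sits at the far left and $s_1^{-1}$ at the far right, separated by an arbitrary element of $\gX$; commuting $s_1^{-1}$ leftwards through a general element of $\gX$ is not available from $(A3)$--$(A5)$, and the standing assumption $(O)$ only yields $x s_1^{-1}=r^{-1}y$ with an uncontrolled $r\in\cS$ to which $(A4)$ no longer applies. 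The paper peels the generator off the other end, $s=ts_j$ with $s_j\in\cS_G$, so that $s^{-1}=s_j^{-1}t^{-1}$ has $s_j^{-1}$ adjacent to the numerator, splits $a=bc$ with $d(b)\leq d(s_j)$, $d(c)\leq d(t)$, and uses the single identity $bc(ts_j)^{-1}=(bs_j^{-1})\bigl(ct^{-1}-[c,s_j](ts_j)^{-1}\bigr)$: here $bs_j^{-1}\in\gX$ by $(A4)$, $ct^{-1}\in\gX$ by the induction hypothesis, and the remaining term has the same shape with numerator $[c,s_j]$ of degree $\leq d(c)\leq d(t)$ controlled by (i). This alignment of the $(A5)$-splitting with the adjacent generator is exactly the idea your sketch is missing.

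Your part (iii) has the same defect. After pushing $b_j^{(1)}$ past $t^{-1}$ and $s^{-1}$, the correction terms are of the form $as^{-1}[b_j^{(1)},s]s^{-1}t^{-1}b_j^{(2)}$ and $a(ts)^{-1}[b_j^{(1)},t]t^{-1}b_j^{(2)}$, in which $a$ is still trapped to the left of $s^{-1}$ --- precisely the original difficulty --- so the claim that they ``factor as products of elements of $\gX$'' is unsupported as stated. The paper does not attempt a direct argument here; it derives (iii) from (i) and the already proved (ii) via the identity
\begin{align*}
a(ts)^{-1}b \;=\; s^{-1}abt^{-1} \;+\; (s^{-1}a)\bigl(t^{-1}[b,t]t^{-1}\bigr)\;-\;(s^{-1}[a,s])(ts)^{-1}b,
\end{align*}
each of whose summands lies in $\gX$ because every numerator is flanked by inverses whose total degree dominates it (this is where $d(a)\leq d(s)$ and $d(s)=d(t)$ enter, giving $d(ab)\leq d(st)$, $d([b,t])\leq d(b)\leq d(t^2)$ and $d([a,s])\leq d(a)\leq d(s)$). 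So, as it stands, your proposal establishes the lemma only modulo its hardest step, and the route you indicate for that step would need to be reorganized along the paper's lines to close.
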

\begin{proof}
(i): Let $\cB$ denote the set of all $a \in \cA$ for which the assertion of (i) is true. By conditions $(d1)$ and $(d3)$, $\cB$ is a $*$-invariant linear subspace of $\cA$. Suppose that $b_1,b_2 \in \cB$ and $s \in \cS_G$. Using conditions $(d1)$ and $(d2)$ and the fact that $d([b_l,s])\leq d(b_l)$, $l=1,2$,  we obtain
$$d([b_1b_2,s])=d(b_1[b_2,s]+[b_1,s]b_2)\leq (d(b_1)+d([b_2,s])\vee (d([b_1,s])+d(b_2))\leq d(b_1)+d(b_2)=d(b_1b_2),
$$
so $\cB$ is a $*$-algebra. Since it contains all generators  of $\cA$ by assumption $(A3)$, we have $\cB=\cA$.

(ii): We first treat the case $s=1$. Suppose that the assertion is valid for some $t \in \cS$ and all $a \in \cA$. By induction it suffices to show that it holds then for the element $ts_j$ of $\cS$, where $s_j\in \cS_G$. Let $a$ be an element of $\cA$ such that $d(a)\leq d(ts_j)$. By assumption $(A5)$ we can assume without loss of generality that $a=bc$, where $d(b)\leq d(s_j)$ and $d(c)\leq d(t)$. Note that $bs_j^{-1}\in \cX$ by $(A4)$.  Since $d(c)\leq d(t)$, we have $d([c,s_j])\leq d(t)\leq d(ts_j)$ by (i) and hence $[c,s_j](ts_j)^{-1} \in \cX$ and $ct^{-1}$ by the induction hypothesis. Therefore, it follows from the identity
$$bc(ts_j)^{-1}= bs_j^{-1}(ct^{-1}-[c,s_j](ts_j)^{-1})
$$
that $bc(ts_j)^{-1}\in \cX$. This completes the proof of (ii) in the case $s=1$.

Suppose now that $d(a) \leq d(st)$. Again by $(A5)$ we can asumme that $a=bc$, where $d(b)\leq d(s)$ and $d(c)\leq d(t)$. Then $d(b^*) \leq d(s^*)$ by $(d3)$. By the preceding paragraph we have $s^{-1}b =(b^*(s^*)^{-1})^* \in \cX$ and $ct^{-1}\in \cX$, so that $s^{-1}at^{-1}= s^{-1}bct^{-1}\in \cX$.

(iii): It suffices to check that all three summands on the right hand side of the identity
$$a(ts)^{-1}b = s^{-1}abt^{-1}+ (s^{-1}a)(t^{-1}[b,t]t^{-1}) +  (s^{-1}[a,s])(ts)^{-1}b$$
belong to $\cX$. Indeed, the first one is in $\cX$ by (ii). Since $d([a,s])\leq d(a)$ by (i) and $d(a)\leq d(s)$ by assumption, the elements $s^{-1}a$ and $s^{-1}[a,s]$ are in $ \cX$  by (ii). Since $d[b,t])\leq d(b) \leq d(t^2)=2d(t)=d(ts))$ by (i) and by the assumption $d(s)=d(t)$, we have $t^{-1}[b,t]t^{-1}\in \cX$ and $(ts)^{-1}b \in \cX$ once again by (ii). Hence the second and the third summands are also in $\cX$.

(iv): By the assumption $\cS= \cS_O$, there exist elements $t\in \cS$ and $c \in \cA$ such that $s^{-1}b=ct^{-1}$. Since then  $-d(s){+}d(b)=d(c){-}d(t)$, we have $d(ac)=d(a){+}
d(c)\leq d(t)$ and hence $as^{-1}b =act^{-1} \in \cX$ by (ii).
\end{proof}

\begin{lemma}\label{t2}
Let $\rho$ be a $*$-representation of a $*$-algebra $\cB$ and $b \in \cB$. If $\rho((b^*b)^m)=0$ for some $m \in \dN$, then $\rho(b)=0.$
\end{lemma}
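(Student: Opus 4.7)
My plan is to reduce the general $m$ case to the case $m=1$ by a successive halving argument applied to the symmetric positive operator $T:=\rho(b^*b)$. Since $(b^*b)^*=b^*b$, the operator $T$ is symmetric on $\cD(\rho)$, and for every $\varphi\in\cD(\rho)$ one has the basic identity
\[
\langle T\varphi,\varphi\rangle=\langle\rho(b^*b)\varphi,\varphi\rangle=\langle\rho(b)\varphi,\rho(b)\varphi\rangle=\|\rho(b)\varphi\|^2\ge 0 .
\]
Hence the case $m=1$ is immediate: if $T\varphi=0$ for all $\varphi\in\cD(\rho)$, then $\|\rho(b)\varphi\|=0$, so $\rho(b)=0$.

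For the inductive step, observe that $\rho((b^*b)^n)=T^n$ (as an operator on $\cD(\rho)$) and that $T^n$ is again symmetric and leaves $\cD(\rho)$ invariant. Using these two facts together,
\[
\|T^n\varphi\|^2=\langle T^n\varphi,T^n\varphi\rangle=\langle T^{2n}\varphi,\varphi\rangle
\]
for every $\varphi\in\cD(\rho)$. Now suppose $\rho((b^*b)^m)=T^m=0$ with $m\ge 2$ and set $n:=\lceil m/2\rceil$, so that $2n\ge m$. Then $T^{2n}=T^m T^{2n-m}=0$, whence $\|T^n\varphi\|^2=0$ for all $\varphi\in\cD(\rho)$, i.e.\ $T^n=0$. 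Since $n<m$, iterating this halving a finite number of times reduces $m$ to $1$, which by the first paragraph yields $\rho(b)=0$.

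There is no real obstacle: the whole argument rests only on the $*$-representation identity $\langle\rho(c)\varphi,\psi\rangle=\langle\varphi,\rho(c^*)\psi\rangle$, the fact that $\rho$ maps $\cD(\rho)$ into itself (so that $T^n\varphi$ lies in $\cD(\rho)$ and $\langle T^n\varphi,T^n\varphi\rangle$ may be rewritten as $\langle T^{2n}\varphi,\varphi\rangle$), and multiplicativity $\rho((b^*b)^m)=\rho(b^*b)^m$. These are all built into the definition of a $*$-representation recalled in the introduction, so no further hypothesis on $\rho$ (boundedness, closedness, etc.) is needed.
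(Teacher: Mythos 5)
Your proof is correct and follows essentially the same route as the paper: both rest on the identity $\|\rho(c)\varphi\|^2=\langle\rho(c^*c)\varphi,\varphi\rangle$ for hermitian $c=(b^*b)^n$ and descend by successive halving of the exponent. The only cosmetic difference is that the paper first pads $m$ up to a power of two by multiplying with $(b^*b)^k$, whereas you absorb that padding into each step via $T^{2\lceil m/2\rceil}=T^{2\lceil m/2\rceil-m}T^m=0$.
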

\begin{proof}
Upon multiplying by some appropriate power $(b^*b)^k$ we can assume that $m=2^n$ for some $n \in \dN_0$.
If $m=1$, then $||\rho(b)\varphi||^2 =\langle \rho(b^*b)\varphi, \varphi \rangle =0$ for $\varphi \in \cD(\rho)$ and hence $\rho(b)=0$. By induction the same reasoning shows that the assertion holds for all numbers $m$ of the form $m=2^n$, where $n \in \dN_0$.
\end{proof}

\begin{lemma}\label{rho}
Let $c \in \cA$, $s \in\cS_G$ and $m \in \dN$. Suppose that $d(c) \leq (2m{-}1)(d(s){-}d(c))$.
Then we have $((cs^{-1})^*(cs^{-1}))^m \in \cX s^{-1}$ and $\rho_s(cs^{-1})=0$ for each $*$-representation $\rho_s$ of the quotient $*$-algebra $\cX_s=\cX/\cJ_s$.
\end{lemma}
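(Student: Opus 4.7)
The plan is to reduce the claim to showing that
\[
E := \bigl((cs^{-1})^*(cs^{-1})\bigr)^m \cdot s = (s^*)^{-1} c^*c\bigl[(s^*s)^{-1}c^*c\bigr]^{m-1}
\]
lies in $\cX$, and then to deduce the representation-theoretic statement via Lemma \ref{t2}. The hypothesis $d(c)\leq (2m{-}1)(d(s){-}d(c))$ rearranges to $2m\,d(c)\leq (2m{-}1)\,d(s)$, which in particular gives $d(c)\leq d(s)$; hence $b:=cs^{-1}\in\cX$ by $(A4)$, so $(b^*b)^m\in\cX$, and the identity $bs=c$ produces the displayed formula for $(b^*b)^m\cdot s$. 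Establishing $E\in\cX$ is equivalent to the desired inclusion $((cs^{-1})^*(cs^{-1}))^m\in\cX s^{-1}$.

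The heart of the proof is an iterated application of $(A5)$. For each $k=1,\dots,m-1$ I decompose the $k$-th factor $c^*c$ of $E$ as $c^*c=\sum_i f^{(k)}_i g^{(k)}_i$ via $(A5)$, with
\[
d(f^{(k)}_i)\leq (2k{-}1)\,d(s)-(2k{-}2)\,d(c),\qquad d(g^{(k)}_i)\leq 2k\,d(c)-(2k{-}1)\,d(s),
\]
where the second bound is truncated at $0$ coordinate-wise whenever it would otherwise be negative. For $k=m$ I take the trivial split $c^*c\cdot 1$. Substituting and regrouping rewrites $E$ as a finite sum of products of the form
\[
\bigl[(s^*)^{-1}f^{(1)}\bigr]\cdot\bigl[g^{(1)}(s^*s)^{-1}f^{(2)}\bigr]\cdots\bigl[g^{(m-2)}(s^*s)^{-1}f^{(m-1)}\bigr]\cdot\bigl[g^{(m-1)}(s^*s)^{-1}c^*c\bigr].
\]
The leading bracket lies in $\cX$ by $(A4)$ since $d(f^{(1)})\leq d(s)$; each intermediate bracket and the closing bracket lies in $\cX$ by Lemma \ref{a4}(iii) applied with $t=s^*$. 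The required conditions $d(g^{(k)})\leq d(s)$ and $d(g^{(k)})+d(f^{(k+1)})\leq 2\,d(s)$ hold by the telescoping identity built into the recursion, and the final inequality $d(g^{(m-1)})+2\,d(c)\leq 2\,d(s)$ is exactly the rearranged hypothesis $2m\,d(c)\leq (2m{-}1)\,d(s)$. Consequently $E$ is a sum of products of elements of $\cX$, hence $E\in\cX$.

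Once $E\in\cX$, we obtain $((cs^{-1})^*(cs^{-1}))^m=E\cdot s^{-1}\in\cX s^{-1}\subseteq\cJ_s$, so $\rho_s\bigl(((cs^{-1})^*(cs^{-1}))^m\bigr)=0$ in every $*$-representation $\rho_s$ of the quotient $\cX_s$, and Lemma \ref{t2} applied with $b=cs^{-1}$ in $\cX_s$ yields $\rho_s(cs^{-1})=0$. The main obstacle I anticipate is the combinatorial bookkeeping of the recursive degree bounds, in particular verifying that the bounds remain non-negative in each coordinate so that $(A5)$ can be invoked consistently at every stage.
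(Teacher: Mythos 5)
Your strategy is essentially the paper's own: you factor $\bigl((cs^{-1})^*(cs^{-1})\bigr)^m$ as $(s^*)^{-1}c^*c\bigl[(s^*s)^{-1}c^*c\bigr]^{m-1}s^{-1}$, split each interior copy of $c^*c$ by $(A5)$ along a telescoping family of multi-index bounds, check that each regrouped bracket lies in $\cX$ via $(A4)$ and Lemma \ref{a4}(ii),(iii), and finish with Lemma \ref{t2}. This is exactly the paper's decomposition (\ref{am}) with $t=s^*$, $A_j=c^*c$, and your formula $(2k{-}1)d(s)-(2k{-}2)d(c)$ coincides with the paper's multi-indices $\gn_j$ in the coordinates where no capping occurs. (Your worry about non-negativity is harmless: $d(c)\leq d(s)$ already gives $(2k{-}2)d(c)\leq(2k{-}1)d(s)$.)

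The genuine gap is in the truncation step: you truncate only the $g$-bound at $0$ but keep the $f$-bound at the full value $(2k{+}1)d(s)-2k\,d(c)$. In a coordinate $l$ where truncation occurs at step $k$, i.e. $2k\,d(c)_l<(2k{-}1)d(s)_l$, the bracket $g^{(k)}(s^*s)^{-1}f^{(k+1)}$ requires $d(g^{(k)})_l+d(f^{(k+1)})_l\leq 2d(s)_l$ for Lemma \ref{a4}(iii), but your bounds only give $0+(2k{+}1)d(s)_l-2k\,d(c)_l$, which exceeds $2d(s)_l$ precisely because $(2k{-}1)d(s)_l>2k\,d(c)_l$; so the claimed telescoping inequality fails exactly where truncation happens. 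Concretely, with $d(c)_l=1$, $d(s)_l=2$, $m\geq 4$ and $k=2$, the bound on $d(f^{(3)})_l$ alone is $6>4=2d(s)_l$, and Lemma \ref{a4}(iii) cannot be invoked for $g^{(2)}(s^*s)^{-1}f^{(3)}$. The repair is what the paper builds into its sequence $\gn_j$: once the linear formula reaches $2d(c)_l$ you must also cap the $f$-bound there, i.e. take $\gn^{(k)}=\min\bigl((2k{-}1)d(s)-(2k{-}2)d(c),\,2d(c)\bigr)$ coordinate-wise and $\gk^{(k)}=2d(c)-\gn^{(k)}$ (the paper's $n_{jl}=2d(c)_l$ for $j\geq m_l$). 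With this capping one has $2d(c)-\gn^{(k)}+\gn^{(k+1)}\leq 2d(s)$ in every coordinate -- at the transition coordinate this is exactly the inequality $2(k{+}1)d(c)_l\leq(2k{+}1)d(s)_l$ defining the first capped step, which is the paper's inequality (\ref{ml}) -- and then your regrouping, reduction to $E\in\cX$, and the final application of Lemma \ref{t2} go through and coincide with the paper's proof.
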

\begin{proof} First note that $cs^{-1}\in \cX$ by Lemma \ref{a4}(ii), since $d(c)\leq d(s)$ by assumption.

We define a sequence of multi-indices $\gn_j{=}(n_{j1},\dots,,n_{jk})$, $j{=}1,{\dots},m$. If $m{=}1$, put $\gn=2d(c)$. Now let $m\geq 2$. Fix $l\in \{1,\dots,k\}$. If $d(s)_l\geq 2 d(c)_l$, we set $n_{j l}=2d(c)_l$ for $j{=}1,\dots,m$. Suppose that $d(s)_l \le 2s(c)_l$. Then there exists a number $m_l\in \{2,\dots,m\}$ such that
\begin{align}\label{ml}
(2m_l-3)(d(s)_l-d(c)_l) \leq d(c)_l \leq (2m_l-1)(d(s)_l-d(c)_l)
\end{align}
Define $n_{1l}=d(s)_l$, $n_{jl}= 2d(c)_l$ if $m_l\leq j\leq m$ and
\begin{align*}
 n_{jl}=2(j-1)(d(s)_l{-}d(c)_l) + d(s)_l~~{\rm if}~~ 2\leq j \leq m_l{-}1.
\end{align*}
Using the preceding definitions we  verify that
\begin{align}\label{c1}
d(c) \leq \gn_j\leq 2d(c)~{\rm for}~ j=1,\dots,m,\\\label{c2}
 2d(c){-}\gn_{j-1} +\gn_j \leq 2d(s)~{\rm for}~ j=2,\dots,m.
\end{align}
Indeed, for $j{=}1$, we have $d(c)_l\leq n_{1l}=d(s)_l\leq 2d(c)_l$. If $2 \leq j\leq m_l{-}1$, using the first inequality of (\ref{ml}) we derive
$$n_{jl}\leq 2(m_l{-}2)(d(s)_l-d(c)_l) + d(s)_l =(2m_l{-}3)(d(s)_l-d(c)_l) + d(c)_l\leq 2d(c)_l$$
and from the definition of $n_{jl}$ we obtain
$$ n_{jl} \geq n_{2l} = 2(d(s)_l- d(c)_l) + d(s)_l \geq d(c)_l.
$$
This proves (\ref{c1}). If $2\leq j \leq m_l-1$, we have $2d(c)_l-n_{j-1,l}+n_{jl} = 2d(s)_l$ by the above definitions. If $j=m_l$, then the corresponding definitions and the second inequality of (\ref{ml}) yield
\begin{align*}
 2d(c)_l-n_{j-1,l}+n_{jl} &= 2d(c)_l -2(m_l{-}2)(d(s)_l-d(c)_l) -d(s)_l + 2d(c)_l \\&=  d(c)_l -(2m_l{-}1)(d(s)_l-d(c)_l) +2d(s)_l \leq 2d(s)_l
\end{align*}
which proves (\ref{c2}).

Now we write the element $((cs^{-1})^*(cs^{-1}))^m$ of $\cX$ in the form
\begin{align}\label{am}
((cs^{-1})^*(cs^{-1}))^m =t^{-1} A_1 (ts)^{-1} A_2 (ts)^{-1} \cdots A_{m} s^{-1},
\end{align}
where $t:=s^*$ and $A_1=\dots=A_{m}:=c^*c$. Note that $d(s)=d(t)$ and $d(A_j)=2d(c) \leq d(ts)$.

 If $m{=}1$, then $d(A_1)=2d(c) \leq d(s){=}d(t)$ and hence $t^{-1}A_1 \in \cX$ by Lemma \ref{a4}(ii).

 Now suppose that $m\geq 2$. For $j{=}1,\dots,m{-}1$, set $\gk_j:=2d(c){-}\gn_{j}$. By the second inequality of (\ref{c1}), we have $\gk_j \in \dN_0^k$. By definition, $\gn_j+\gk_j =2d(c)=d(A_j)$. Therefore, by condition $(A5)$ we can write the element $A_j$ of $\cA$ as a finite sum $A_j= \sum_i b_{ji}c_{ji}$ of elements $b_{ji}, c_{ji} \in \cA$ such that $d(b_{ji})\leq \gn_j$ and $d(c_{ji})\leq \gk_j$. Since $\gn_1{=}d(t)$ by definition, $t^{-1}b_{1i}\in \cX$ by Lemma \ref{a4}(ii). If $j{=}2,\dots,m{-}1$, then we have $\gk_{j-1} + \gn_{j} =2d(c)- \gn_{j-1} +\gn_j \leq 2d(s)=d(ts)$ by (\ref{c2}) and $\gk_j=2d(c)-\gn_j \leq d(c)\leq d(s)$ by the first inequality of (\ref{c1}). Therefore, Lemma \ref{a4}(iii) applies and yields that $c_{j-1,i} (ts)^{-1}b_{j,i^\prime}\in \cX$. Finally, we have $(ts)^{-1}A_m \in \cX$, since $\gn_m=2d(c){=}d(A_m)$ by construction.

  In the preceding two paragraphs we have shown that $t^{-1} A_1 (ts)^{-1} A_2 (ts)^{-1} \cdots A_{m}\in \cX$. Therefore, by (\ref{am}) the element $((cs^{-1})^*(cs^{-1}))^m$ belongs to $ \cX s^{-1}\subseteq \cJ_s$, so that $\rho_s(((cs^{-1})^*(cs^{-1}))^m)=0$. The second assertion follows from Lemma \ref{t2} applied to $b=cs^{-1}$.
\end{proof}
\begin{remk}\label{extval} The preceding proof shows that the assertion of Lemma \ref{rho} is valid for $s \in \cS$ (rather than $s \in \cS_G$) provided that $a(s^*s)^{-1}b \in \cX$ for all $a,c \in \cA$ satisfying $d(a)\leq d(s)$ and $d(ab) \leq 2d(s)$.
\end{remk}
The next three propositions contains results about elements which are annihilated by the representations $\rho_s$ of the quotient $*$-algebras $\cX_s$.
\begin{prop}\label{rhonull1}
Let $s,t \in \cS_G$ and $a \in \cA$ be such that $d(a) {<}d(st)$. Then  $\rho_s(s^{-1}at^{-1})=\rho_s(t^{-1}as^{-1})=0$ for any $*$-representation $\rho_s$ of the $*$-algebra  $\cX_s=\cX/\cJ_s$.
\end{prop}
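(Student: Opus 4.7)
The plan is to use condition $(A5)$ to split $a$ into a finite sum $a = \sum_i b_i c_i$ in which the left factors $b_i$ have low enough degree for Lemma \ref{rho} to annihilate $\rho_s(s^{-1} b_i)$ via adjunction, while the right factors satisfy $c_i t^{-1} \in \cX$ by Lemma \ref{a4}(ii). The strict inequality $d(a) < d(st)$ should provide exactly one unit of slack per coordinate, which is what one needs to meet the hypothesis of Lemma \ref{rho}.

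In detail, since $d(a)_l \leq d(s)_l + d(t)_l - 1$ for every coordinate $l$, I would set $\gn_l := \max(0, d(s)_l - 1)$ and $\gk_l := \max(0, d(a)_l - \gn_l)$ and verify by a short case check (on whether $d(s)_l = 0$ or $d(s)_l \geq 1$) that $\gn + \gk \geq d(a)$, $\gk \leq d(t)$, and in each coordinate either $\gn_l = 0$ or $\gn_l < d(s)_l$. Condition $(A5)$ then supplies a decomposition $a = \sum_i b_i c_i$ with $d(b_i) \leq \gn$ and $d(c_i) \leq \gk$, so $s^{-1} b_i \in \cX$ and $c_i t^{-1} \in \cX$ by Lemma \ref{a4}(ii). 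In each coordinate one has either $d(b_i)_l = 0$ or $d(b_i)_l < d(s)_l$, so for $m$ sufficiently large the hypothesis $d(b_i^*) \leq (2m-1)(d(s^*) - d(b_i^*))$ of Lemma \ref{rho} is satisfied. Applying Lemma \ref{rho} to $c := b_i^*$ and $s^*$ (which lies in $\cS_G$ by $*$-invariance) yields $\rho(b_i^*(s^*)^{-1}) = 0$ for every $*$-representation $\rho$ of $\cX_{s^*}$. Because $\cJ_{s^*} = \cJ_s$ we have $\cX_{s^*} = \cX_s$, and taking adjoints gives $\rho_s(s^{-1} b_i) = 0$. Therefore $\rho_s(s^{-1} b_i c_i t^{-1}) = \rho_s(s^{-1} b_i)\,\rho_s(c_i t^{-1}) = 0$, and summing over $i$ yields $\rho_s(s^{-1} a t^{-1}) = 0$.

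For the second equality $\rho_s(t^{-1} a s^{-1}) = 0$ I would use a symmetric decomposition placing the Lemma \ref{rho} factor on the right: write $a = \sum_j b_j' c_j'$ with $d(b_j') \leq d(t)$ and $d(c_j')$ satisfying the Lemma \ref{rho} hypothesis relative to $s$, so that $t^{-1} b_j' \in \cX$, $c_j' s^{-1} \in \cX$, and $\rho_s(c_j' s^{-1}) = 0$ directly from Lemma \ref{rho}. Alternatively, apply the first equality with $(a, s, t)$ replaced by $(a^*, t^*, s^*)$, using $d(a^*) = d(a) < d(st) = d(t^* s^*)$ and $\rho_{s^*} = \rho_s$, and take adjoints.

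The only real obstacle is the coordinate-wise construction of $\gn$ and $\gk$ so that the three required inequalities hold simultaneously (the case $d(s)_l = 0$ needs the slack to be absorbed by $c_i$ rather than $b_i$); once this arithmetic is settled, the conclusion follows from routine applications of Lemmas \ref{a4}(ii) and \ref{rho} together with the multiplicativity of $\rho_s$.
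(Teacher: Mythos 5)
Your proposal is correct and takes essentially the same route as the paper: decompose $a$ via $(A5)$ so that one factor has multi-degree strictly below $d(s)$ in every coordinate (with the zero coordinates handled separately), then apply Lemma \ref{a4}(ii) for membership in $\cX$, Lemma \ref{rho} for the vanishing, and multiplicativity of $\rho_s$. The paper writes out only the $t^{-1}as^{-1}$ case, placing the small factor next to $s^{-1}$; your adjoint detour through $s^*$ and $\cJ_{s^*}=\cJ_s$ for the $s^{-1}at^{-1}$ case is a harmless symmetric variant, and your explicit $\max(0,d(s)_l-1)$ bookkeeping is in fact slightly more careful than the paper's choice $k_l<d(s)_l$, which silently assumes $d(s)_l\geq 1$.
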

\begin{proof}
The assumption $d(a) < d(st)$ implies that $d(a)_l< d(s)_l+d(t)_l$ for $l=1,\dots,k$. We choose $\gn, \gk \in \dN_0^k$ such that $n_l+k_l=a_l$, $n_l \leq d(t)_l$ and $k_l< d(s)_l$ for $l{=}1,\dots,k$. Since $d(a)= \gn+\gk$, by condition $(A5)$ we can write  $a=\sum_i b_ic_i$, where $b_i,c_i \in \cA$, $d(b_i)\leq \gn$ and $d(c_i)\leq \gk$. Since $d(c_i)_l\leq k_l < d(s)_l$, there is a number $m\in \dN$ such $m(d(s){-}d(c_i))\geq d(c_i)$ for all $i$. Then we have $t^{-1}b_i, c_is^{-1} \in \cX$ by Lemma \ref{a4}(ii) and $\rho_s(c_i s^{-1})=0$ by Lemma \ref{rho}, so that $\rho_s(t^{-1}as^{-1})=\sum_i \rho_s(t^{-1}b_i)\rho_s(c_is^{-1})=0$.
\end{proof}
\begin{prop}\label{rhonull2}
Suppose that $\cS=\cS_O$. Let $s{=}s_1\dots s_p\in \cS$ and $t{=}s_{p+1}\dots s_{p+q}  \in \cS$, where $s_l \in \cS_G$ for $l{=}1,\dots,p+q$. If  $a \in \cA$ and $d(a) <d(st)$, then we have  $\rho_{s_l}(s^{-1}at^{-1})=\rho_{s_l}(t^{-1}as^{-1})=0$ for each $*$-representation $\rho_{s_l}$ of the $*$-algebra  $\cX_{s_l}=\cX/\cJ_{s_l}$, $l{=}1,\dots,p{+}q$.
\end{prop}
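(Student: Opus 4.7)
Plan: the proof should combine an extension of Lemma \ref{rho} to arbitrary $s\in\cS$ (made available by the standing hypothesis $\cS=\cS_O$ via Remark \ref{extval}) with the observation that whenever $s_l$ is a factor of some $r\in\cS$ one has $\cJ_r\subseteq\cJ_{s_l}$.

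To set this up I would first note that, because $\cS=\cS_O$, Lemma \ref{a4}(iv) yields $a(s^*s)^{-1}b\in\cX$ for all $a,b\in\cA$ with $d(ab)\leq 2d(s)$, which is exactly the hypothesis of Remark \ref{extval}. Hence Lemma \ref{rho} and, by taking adjoints, also its sibling statement $\rho_s(s^{-1}c)=0$, become valid for every $s\in\cS$: if $c\in\cA$ satisfies $d(c)_j<d(s)_j$ in every coordinate with $d(s)_j\geq 1$ and $d(c)_j=0$ elsewhere (a relation I shall abbreviate $d(c)\prec d(s)$), then $\rho_s(cs^{-1})=\rho_s(s^{-1}c)=0$ for every $*$-representation $\rho_s$ of $\cX_s$. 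Secondly, if $s_l$ occurs as a factor of some $r\in\cS$, say $r=u\,s_l\,u'$ with $u,u'\in\cS$, then $r^{-1}=(u')^{-1}s_l^{-1}u^{-1}\in\cX s_l^{-1}\cX$ and, by $*$-invariance, $(r^*)^{-1}\in\cX(s_l^*)^{-1}\cX$; hence $\cJ_r\subseteq\cJ_{s_l}$, and consequently $\cX_{s_l}$ is canonically a quotient of $\cX_r$ and every $*$-representation $\rho_{s_l}$ of $\cX_{s_l}$ lifts to one of $\cX_r$.

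Granting these two ingredients, the main computation is short. Suppose first $l\leq p$, so $s_l$ is a factor of $s$. Using $d(a)<d(st)$, choose a componentwise split $d(a)=\gn+\gk$ with $\gn\prec d(s)$ and $\gk\leq d(t)$: in each coordinate with $d(s)_j\geq 1$ take $n_j=\min(d(a)_j,d(s)_j-1)$ and $k_j=d(a)_j-n_j\leq d(t)_j$, and in each coordinate with $d(s)_j=0$ put $n_j=0$ and $k_j=d(a)_j<d(t)_j$. Condition $(A5)$ writes $a=\sum_i b_ic_i$ with $d(b_i)\leq\gn$ and $d(c_i)\leq\gk$, and Lemma \ref{a4}(ii) places both $s^{-1}b_i$ and $c_it^{-1}$ in $\cX$. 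The first ingredient above gives $\rho_s(s^{-1}b_i)=0$, and then the second ingredient gives $\rho_{s_l}(s^{-1}b_i)=0$, so
\[
\rho_{s_l}(s^{-1}at^{-1})=\sum_i\rho_{s_l}(s^{-1}b_i)\,\rho_{s_l}(c_it^{-1})=0.
\]
The case $l>p$ is the mirror image: swap the roles in the split ($\gn\leq d(s)$, $\gk\prec d(t)$), so that the killing factor becomes $c_it^{-1}\in\cJ_t\subseteq\cJ_{s_l}$; and $\rho_{s_l}(t^{-1}as^{-1})=0$ follows from the same recipe applied to the reversed product. The main obstacle is genuinely just the bookkeeping of the multi-index split (with separate care for the degenerate coordinates $d(s)_j=0$) and the careful invocation of the extension of Lemma \ref{rho} to composite $s\in\cS$---the unique place where the assumption $\cS=\cS_O$ is used.
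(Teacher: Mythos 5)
Your proof is correct and follows essentially the paper's own route: decompose $a$ via $(A5)$ according to a multi-index split forced by $d(a)<d(st)$, apply Lemma \ref{rho} extended to composite denominators through Remark \ref{extval} and Lemma \ref{a4}(iv) (the one place where $\cS=\cS_O$ is used), and exploit that $s_l$ being a factor of $s$ (or $t$) gives $\cJ_s\subseteq\cJ_{s_l}$, so $\rho_{s_l}$ annihilates the relevant factor and the product vanishes. One small slip in wording: ``$c_it^{-1}\in\cJ_t$'' is not what the argument gives (in general such an element need not lie in the ideal); what one gets is $((c_it^{-1})^*(c_it^{-1}))^m\in\cX t^{-1}\subseteq\cJ_{s_l}$, whence $\rho_{s_l}(c_it^{-1})=0$ by Lemma \ref{t2} --- which is exactly your correctly stated first ingredient, so nothing is lost.
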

\begin{proof}
Let us carry out the proof of $\rho_{s_l}(t^{-1}as^{-1})=0$ for $l{=}1,\dots,p$. The other assertions are derived in a similar manner. We argue as in the preceding proof of Proposition \ref{rhonull1} and retain the notation used therein.  Since $\cS=\cS_O$, it follows from Lemma \ref{a4}(iv) and Remark \ref{extval} that the assertion of Lemma \ref{rho} is valid for $s$ and $c_i$, that is, we have $((c_is^{-1})^*(c_is^{-1}))^m\in \cX s^{-1}\subseteq \cJ_{s_l}$. Hence $\rho_{s_l}(c_is^{-1})=0$ by Lemma \ref{t2} which in turn implies that $\rho_{s_l}(t^{-1}as^{-1})=\sum_i \rho_{s_l}(t^{-1}b_i)\rho_{s_l}(c_is^{-1})=0$.
\end{proof}
 For the next proposition we need one more notation. Let $s \in \cS$, $r\in \cS_G$ and $a \in \cA$. We say that $r$ is a factor of $s$ if there are elements $s_1,\dots,s_p\in \cS_G$ and $i\in \{1,\dots,p\}$ such that $s=s_1\dots s_p$ and $r=s_i$. We shall write $a <_r s$ if $r$ is a factor of $s$ and there are multi-indices $\gr, \gn \in \dN_0^k$ such that $d(a)=\gr+\gn$, $\gr< d(r)$ and $\gn \leq d(s){-}d(r)$.

\begin{prop}\label{rhonull3}
Suppose that $\cS=\cS_O$. Let $s,t \in \cS$, $r\in \cS_G$
 and $a \in \cA$. Assume that $r$ is a factor of $s$ or a factor of $t$. If $a <_r st$, then $\rho_r(s^{-1}at^{-1})=\rho_r(t^{-1}as^{-1})=0$ for each $*$-representation $\rho_r$ of $\cX_r=\cX/ \cJ_r$.
\end{prop}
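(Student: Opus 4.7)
The plan is to derive Proposition \ref{rhonull3} as a direct corollary of Proposition \ref{rhonull2}. The key observation is that the relation $a<_r st$ unpacks to a componentwise strict inequality $d(a)<d(st)$, which is precisely the hypothesis of Proposition \ref{rhonull2}; the quotient $\cX_r$ then appears as one of the quotients $\cX_{s_l}$ in its conclusion.

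Concretely, the first step is to exploit that $r\in\cS_G$ is a factor of $s$ or of $t$: one picks factorizations $s=s_1\cdots s_p$ and $t=s_{p+1}\cdots s_{p+q}$ with all $s_l\in\cS_G$ and with $r=s_l$ for some $l\in\{1,\dots,p{+}q\}$. This is precisely the setup required by Proposition \ref{rhonull2}. The second step is to extract the componentwise strict inequality $d(a)<d(st)$ from the assumption $a<_r st$: by definition there exist $\gr,\gn\in\dN_0^k$ with $d(a)=\gr+\gn$, $\gr<d(r)$ (strict in each coordinate), and $\gn\leq d(st)-d(r)$, whence
\begin{equation*}
d(a)_j=\gr_j+\gn_j\leq(d(r)_j-1)+(d(st)_j-d(r)_j)=d(st)_j-1
\end{equation*}
for every $j$, so $d(a)<d(st)$ in the componentwise strict sense.

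The final step is to invoke Proposition \ref{rhonull2}, which under the standing hypothesis $\cS=\cS_O$ yields $\rho_{s_l}(s^{-1}at^{-1})=\rho_{s_l}(t^{-1}as^{-1})=0$ for every $*$-representation $\rho_{s_l}$ of $\cX_{s_l}$, and in particular for $l$ such that $s_l=r$. Since then $\cX_r=\cX_{s_l}$, this is exactly the assertion $\rho_r(s^{-1}at^{-1})=\rho_r(t^{-1}as^{-1})=0$ for every $*$-representation $\rho_r$ of $\cX_r=\cX/\cJ_r$, as desired. The only subtlety is the bookkeeping needed to unpack the definition of $<_r$ into the strict componentwise inequality $d(a)<d(st)$; once that is in hand, no new work beyond Proposition \ref{rhonull2} is required, so there is no serious obstacle to the argument.
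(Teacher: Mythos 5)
Your reduction stands or falls with the reading of the condition $\gr<d(r)$ in the definition of $<_r$. If one takes the paper's displayed convention literally ($<$ strict in \emph{every} coordinate), then your arithmetic is correct, $a<_r st$ does force $d(a)<d(st)$, and Proposition \ref{rhonull2} indeed swallows Proposition \ref{rhonull3}. But that reading cannot be the intended one: it would make the hypothesis $a<_r st$ unsatisfiable whenever $d(r)$ has a zero component (no $\gr\in\dN_0^k$ satisfies $\gr<d(r)$ coordinatewise-strictly then), and this is exactly the situation in which the proposition is later used. In Theorem \ref{pqpos} the factor is $r=s_1$ with $d(s_1)=(1,0)$ (similarly in Theorem \ref{abpos}), and the elements it is applied to, e.g. $a=g_k(q)p^k$ with $k<2m_1$ but $\deg g_k$ possibly equal to $2m_2$, satisfy $d(a)=(k,\deg g_k)\not< d(t^*t)=(2m_1,2m_2)$, so Proposition \ref{rhonull2} genuinely does not cover them. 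The whole point of the relation $<_r$ is that the strict degree deficit is demanded only in the coordinates where $d(r)$ is positive (i.e.\ only ``relative to the factor $r$''), while in the remaining coordinates $d(a)$ may exhaust $d(st)$; this is also the reading under which the paper's own proof step ``there exists $m$ with $\gr\leq(2m{-}1)(d(r){-}\gr)$'' makes sense. Under that intended reading your key inequality $d(a)_j\leq d(st)_j-1$ for all $j$ fails, and the reduction to Proposition \ref{rhonull2} collapses. So while your proof is formally consistent with the stated conventions, it proves only a degenerate special case that neither adds anything to Proposition \ref{rhonull2} nor supports the applications in Sections 5 and 7; the substance of the proposition is missed.

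For the statement as actually used, new work is required, and this is what the paper's proof does: it reruns the telescoping construction of Lemma \ref{rho} with $d(c)$ and $d(s)$ replaced by $\gr$ and $d(r)$, decomposes each $A_j=c^*c$ via $(A5)$ with the shifted bounds $\gn_j+d(c)-d(r)$ and $\gk_j+d(c)-d(r)$, uses Lemma \ref{a4}(iv) (this is where the hypothesis $\cS=\cS_O$ is really needed) to place the intermediate products in $\cX$, and extracts a factor $r^{-1}$ at the end to get $((cs^{-1})^*(cs^{-1}))^m\in\cJ_r$, hence $\rho_r(cs^{-1})=0$ by Lemma \ref{t2}; the proposition then follows by the splitting argument of Proposition \ref{rhonull1}. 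If you want to salvage a short argument you would have to either adopt the all-coordinates-strict reading and accept that the proposition becomes unusable where the paper needs it, or carry out the modification just described.
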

\begin{proof}
The proof follows by some  modifications in the proofs of Lemma \ref{rho} and Proposition \ref{rhonull1}. We explain this for the proof of $\rho_r(t^{-1}as^{-1})=0$ and in the case where $r$ is a factor of $s$, say $s=s_1\dots s_p$ and $r=s_i$.

First we modify the proof of Lemma \ref{rho}. Let $c$ be an element of $\cA$ such that $c <_r s$. We write $d(c)=\gr +\gn$ with $\gr< d(r)$ and $\gn \leq d(s){-}d(r)$. Since $\gr < d(r)$, there exists an $m \in \dN$ such that $\gr \leq (2m-1)(d(r){-}\gr)$. We construct a sequence of multi-indices $\gn_j$ as in the proof of Lemma \ref{rho} with $d(c)$ replaced by $\gr$ and $d(s)$ replaced by $d(r)$ therein. Then equations (\ref{c1}) and (\ref{c2}) yield $\gn_j \leq 2\gr$ and $2\gr-\gn_{j-1}+\gn_j \leq 2d(r)$. Put
$\gk_j:=2\gr-\gn_j$. We now decompose $A_j=c^*c$ as a finite sum $A_j=\sum_i b_{ji}c_{ji}$ with $d(b_{ji}) \leq \gn_j + d(c){-}d(r) $ and $d(c_{ji})\leq \gk_j +d(c){-}d(r)$. Then we obtain
$$d(c_{j-1,i}b_{ji^\prime})\leq \gk_{j-1}+\gn_j +2d(c)-2d(r)= 2\gr-\gn_{j-1}+\gn_j+2d(c)-2d(r)\leq 2d(c)\leq d(st).
$$
Since we assumed that $\cS=\cS_O$, Lemma \ref{a4}(iv) applies and yields that  $c_{j-1,i}(ts)^{-1}b_{ji^\prime}\in \cX$. In a similar manner we obtain that $t^{-1}b_{1i}\in \cX$. Recall that $\gn_m=2\gr$ and $\gk_m=0$ by construction. Therefore we have $d(c_{mi}) \leq d(c){-}d(r)$ and so $d(rc_{mi})\leq d(c)\leq d(s)$. Employing again Lemma \ref{a4}(iv) we get $rc_{mi}s^{-1}\in \cX$ and so $c_{mi}s^{-1}= r^{-1}(rc_{mi}s^{-1}) \in \cJ_r$. Combining  the latter with (\ref{am}) it follows that $((cs^{-1})^*(cs^{-1}))^m\in \cJ_r$. Hence we obtain $\rho_r(cs^{-1})=0$ by Lemma \ref{t2}.

Since $a<_r d(st)$, as in the proof of Proposition \ref{rhonull1} we decompose $d(a)=\gn + \gk$, where $\gn \leq d(t)$, $\gr<\gk \leq d(s)$, and $\gr < d(r)$. By (A6) we can write and $a=\sum b_lc_l$ with $d(b_l)\leq \gn$ and $d(c_l)\leq \gk$. Since $r$ is a factor of $s$, we have $c_i <_r s$ and hence $\rho_r(c_ls^{-1})=0$ as shown in the preceding paragraph. Because of $t^{-1}b_l \in \cX$ by Lemma \ref{a4}(ii), we conclude that $\rho_r(t^{-1}as^{-1})=0$. \end{proof}

\section{Application: A Strict Positivstellensatz for the Weyl Algebra}

Throughout this section і$\cA$ denotes the Weyl algebra $\cW(1)$, that is, $\cA$ is the unital $*$-algebra with hermitian generators $p$ and $q$ and defining relation
\begin{align}\label{pq}
pq-qp =- i 1.
\end{align}
It is well-known that this commutation relation is satisfied by the
self-adjoint operators
$$
(P_0\varphi)(t) =-i \varphi^\prime (t)~~{\rm and}~~ (Q_0 \psi)(t) = t \psi(t),~t \in \dR,
$$
on the Hilbert space $L^2(\dR)$. The pair $(P_0,Q_0)$ is called \textit{Schr\"odinger pair} and the
corresponding $*$-representation $\pi_0$ of the $*$-algebra $\cA$ is the \textit{Schr\"odinger representation}. That is,
$$
(\pi_0(p)\varphi)(t) =-i \varphi^\prime (t)~{\rm and}~ (\pi_0 (q)\varphi)(t) = t \varphi (t)~~{\rm for}~~ \varphi \in \cD(\pi_0){=}\cS(\dR)\subseteq \cH(\pi_0){=}L^2(\dR).
$$
We fix two non-zero reals $\alpha$ and $\beta$ and put
$$\cS_g=\{s_1=p-\alpha i ,~ s_2=q-\beta i \},~\cS_G=\cS_g\cup \cS_g^*,~ \gX_G
=\cS_G^{-1},~ \cA_G=\{ p,~q \}.
$$
From the
relation (\ref{pq}) it follows immediately that the $*$-monoid $\cS$ generated by $\cS_G$ is an Ore set, that is, we can assume that $\cS{=}\cS_O$. The unital $*$-subalgebra $\cX$ of $\cA\cS_O^{-1}$ is generated by $x:=s_1^{-1}$ and $y:=s_2^{-1}$.
From  (\ref{pq}) we easily derive
 the following relations in the $*$-algebra $\cX$:
\begin{align}\label{xybounded}
x-x^* =2i\alpha ~ x^*x,~~ y-y^*= 2i \beta ~ y^*y,\\ \label{normal}
xx^*= x^*x,~ yy^* =y^*y,\\
 \label{xycomm}
xy-yx = -i xy^2x =-i yx^2 y, ~xy^*-y^*x = -i x(y^*)^2x =-i y^*x^2 y^*.
\end{align}
\begin{lemma}\label{cond1}
With the preceding definitions, conditions $(O)$, $(IA)$ and $(AB)$ are fulfilled.
\end{lemma}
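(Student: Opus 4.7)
The plan is to verify $(IA)$ directly, from which $(O)$ follows via Lemma~\ref{A12}(ii), and then to deduce $(AB)$ from an explicit sum-of-squares identity in $\cX$.

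For $(IA)$, I check that the inner automorphism $\alpha_s(g) = s g s^{-1}$ lies in $\cX$ for each $s \in \cS_G = \{s_1, s_1^*, s_2, s_2^*\}$ and each $g \in \cX_G = \{x, x^*, y, y^*\}$. Eight of the sixteen cases are immediate: since $s_1$ and $s_1^*$ are polynomials in $p$, they commute with one another and with $x = s_1^{-1}$ and $x^* = (s_1^*)^{-1}$, so $\alpha_{s_1}$ and $\alpha_{s_1^*}$ fix $x$ and $x^*$; symmetrically, $\alpha_{s_2}$ and $\alpha_{s_2^*}$ fix $y$ and $y^*$. The remaining eight cross cases are handled by multiplying the relations in \eqref{xycomm} on one side by $x^{-1}=s_1$, $y^{-1}=s_2$, or their adjoints. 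For instance, from $xy-yx = -ixy^2x$ one obtains $\alpha_{s_1}(y) = x^{-1}yx = y + iy^2 x \in \cX$, and from $xy - yx = -iyx^2 y$ one obtains $\alpha_{s_2}(x) = y^{-1}xy = x - ix^2 y \in \cX$. The second half of \eqref{xycomm} together with its adjoints produces the six remaining formulas $\alpha_{s_1}(y^*)$, $\alpha_{s_1^*}(y)$, $\alpha_{s_1^*}(y^*)$, $\alpha_{s_2}(x^*)$, $\alpha_{s_2^*}(x)$, $\alpha_{s_2^*}(x^*)$, each of the same shape $g \pm i g^2 h$ with $g,h \in \cX_G$. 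This proves $(IA)$, and Lemma~\ref{A12}(ii) then delivers $(O)$.

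For $(AB)$, the crucial calculation is
\begin{align*}
(1 + i\alpha x)^*(1 + i\alpha x) = (1 - i\alpha x^*)(1 + i\alpha x) = 1 + i\alpha(x - x^*) + \alpha^2 x^*x = 1 - \alpha^2 x^*x,
\end{align*}
where the last equality uses $x - x^* = 2i\alpha x^*x$ from \eqref{xybounded}. Rearranging,
\begin{align*}
\alpha^{-2}\cdot 1 - x^*x = \alpha^{-2}(1 + i\alpha x)^*(1 + i\alpha x) \in \sum \cX^2,
\end{align*}
so $x \in \cX_b$. Replacing $(\alpha, s_1, x)$ by $(\beta, s_2, y)$ and invoking the companion relation in \eqref{xybounded} gives $y \in \cX_b$. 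Since $\cX_b$ is a $*$-subalgebra of $\cX$ by Vidav's theorem (recalled in the introduction) and $\{x, x^*, y, y^*\}$ generates $\cX$, one concludes $\cX_b = \cX$, which is $(AB)$.

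No step presents a genuine obstacle. The only slightly tedious ingredient is the tabulation of the eight nontrivial $\alpha_s(g)$ in $(IA)$, and even that is cut in half by the commutativity of each pair $\{s_j, s_j^*\}$ and by the $s_1 \leftrightarrow s_2$ symmetry; the key analytic content of the proof is concentrated in the one-line sum-of-squares identity used for $(AB)$.
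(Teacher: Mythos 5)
Your proof is correct and takes essentially the same route as the paper: the identity $1-\alpha^2x^*x=(1+i\alpha x)^*(1+i\alpha x)$ (and its $\beta,y$ companion) from (\ref{xybounded}) gives $(AB)$ via the fact that $\cX_b$ is a $*$-subalgebra containing the generators, while $(IA)$ is read off from (\ref{xybounded})--(\ref{xycomm}) and $(O)$ follows by Lemma \ref{A12}(ii). The only difference is that you write out the sixteen cases of $\alpha_s(g)$ explicitly, which the paper leaves as "easily derived."
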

\begin{proof} Let us prove $(AB)$.
 Using relations (\ref{xybounded}) it follows that
\begin{align}\label{abounded}
1- \alpha^2 x^*x = (1+i\alpha  x)^* (1+i\alpha  x)~{\rm and}~ 1- \beta^2 y^*y = (1+\beta i y)^* (1+\beta  i y)
\end{align}
are in $\sum \cX^2$, so conclude that $\cX=\cX_b$. This means that $\gX$ is algebraically bounded, so $(AB)$ is satisfied.

Condition $(IA)$ is easily derived from relations (\ref{xybounded})--(\ref{xycomm}) and condition $(O)$ follows from $(IA)$ according to Lemma \ref{A12}.
\end{proof}
\begin{lemma}\label{selfadjoint}
Let $\gamma \in \dR$ and let $z$ be a bounded normal operator on a Hilbert $\cH$ such that
$z-z^*= 2 \gamma i z^*z$ and ${\rm ker}~z=\{0\}$. Then $A:= z^{-1} + i\gamma  I$ is a self-adjoint operator on $\cH$.
 \end{lemma}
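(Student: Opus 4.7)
The plan is to verify $A^*=A$ with matching domains by combining a standard fact about adjoints of unbounded inverses of bounded normal operators with a short algebraic manipulation of the given relation.

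First, I would record the relevant domain-theoretic facts. Normality of $z$ together with $\ker z=\{0\}$ forces $\ker z^*=\{0\}$, so $\overline{\Ran z}=\overline{\Ran z^*}=\cH$, and the spectral theorem for the bounded normal operator $z$ in fact gives the equality of ranges $\Ran z=\Ran z^*$ (multiplication by $\lambda$ and by $\bar\lambda$ have the same range in $L^2$). Consequently $z^{-1}$ and $(z^*)^{-1}$ are closed, densely defined (generally unbounded) operators with common domain $\cD:=\Ran z$, and the standard identity $(z^{-1})^*=(z^*)^{-1}$ holds. Since $i\gamma I$ is bounded, it follows that both $A$ and its adjoint $A^*=(z^{-1})^*-i\gamma I=(z^*)^{-1}-i\gamma I$ have domain $\cD$.

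The heart of the proof is the identity
$$
(z^*)^{-1}-z^{-1}=2i\gamma\,I\qquad\text{on }\cD.
$$
To obtain it, given $\varphi\in\cD$ I would write $\varphi=z\psi=z^*\xi$ with uniquely determined $\psi,\xi$ (by injectivity of $z$ and $z^*$). Applying the hypothesis in the form $z=z^*+2i\gamma z^*z$ to $\psi$ gives $\varphi=z\psi=z^*(\psi+2i\gamma\varphi)$; comparing with $\varphi=z^*\xi$ and cancelling $z^*$ via $\ker z^*=\{0\}$ yields $\xi-\psi=2i\gamma\varphi$, i.e.\ $(z^*)^{-1}\varphi-z^{-1}\varphi=2i\gamma\varphi$.

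Feeding this identity into the expression for $A^*$ gives $A^*=(z^*)^{-1}-i\gamma I=z^{-1}+2i\gamma I-i\gamma I=z^{-1}+i\gamma I=A$, with $\cD(A^*)=\cD(A)=\cD$, so $A$ is self-adjoint. The only delicate point is the domain identification $\cD((z^{-1})^*)=\Ran z$ and the formula $(z^{-1})^*=(z^*)^{-1}$, which is the standard consequence of the spectral theorem for bounded normal operators with trivial kernel; I would cite rather than reprove this. Everything else is the brief algebraic calculation above, and one could equivalently argue via the functional calculus by noting that the relation $z-z^*=2i\gamma z^*z$ confines the spectral support of $z$ to the circle $\{\lambda\neq0:\Im\lambda=\gamma|\lambda|^2\}$, on which $f(\lambda)=\lambda^{-1}+i\gamma$ is real-valued.
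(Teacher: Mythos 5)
Your proof is correct and takes essentially the same route as the paper: both rest on the identity $(z^*)^{-1}-z^{-1}=2i\gamma I$ on the common domain $\Ran z=\Ran z^*$, combined with the standard fact $(z^{-1})^*=(z^*)^{-1}$, to get $A^*=A$ with equal domains. The only cosmetic difference is that you obtain $\Ran z=\Ran z^*$ from the spectral theorem for normal operators, whereas the paper reads it off directly from the relation by writing $z^*=z(I-2\gamma i z^*)$ and $z=z^*(I+2\gamma i z)$; both are fine.
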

{\it{Proof.}}
First we note that ${\rm ker}~z^*=\{0\}$, because $z$ is normal.
Since $z^*=z(I-2\gamma i z^*)$ and $z=z^*(I+2 \gamma i z)$, we have $\cD((z^*)^{-1}) {=}z^* \cH {=}z\cH {=}\cD(z^{-1})$. Further, from the identity $z^*=z(I-2\gamma i z^*)$ we get $z^{-1}z^* =I- 2 i\gamma  z^*$ on $\cH$. For $\varphi =z^*\psi \in \cD((z^*)^{-1})$ we obtain  $z^{-1}\varphi =z^{-1}z^*\psi = \psi -2i \gamma z^*\psi = (z^*)^{-1} \varphi -2i\gamma  \varphi$, that is,
$z^{-1} \supseteq (z^*)^{-1}- 2\gamma iI$. Because $\cD((z^*)^{-1})=\cD(z^{-1})$ as noticed above, it follows that $z^{-1} =(z^*)^{-1}- 2i\gamma I$. Using the latter identity we derive
\begin{align*}
A= z^{-1} +i \gamma I= (z^*)^{-1} -i \gamma I= (z^{-1})^* - i \gamma I = (z^{-1}+i\alpha I)^*=A^*. \quad \quad\quad \quad\quad\quad\quad\quad \quad\quad \Box
\end{align*}

The assertion of the next proposition describes Schr\"odinger pairs in terms of resolvents. A slightly different characterization of this kind has been first obtained in \cite{B}.
\begin{prop}\label{schrod1}
Suppose that $x$ and $y$ are closed linear operators on a Hilbert space $\cH$ with trivial kernels (that is, ${\rm ker } ~x= {\rm ker}~y=\{0\}$) satisfying equations (\ref{xybounded})--(\ref{xycomm}). Then
\begin{align}\label{PQ}
P= x^{-1} + i\alpha  I~{\rm and}~Q = y^{-1} +\beta iI
\end{align}
 are self-adjoint operators on $\cH$ and the pair $(P,Q)$ is unitarily equivalent to a direct sum of Schr\"odinger pairs $(P_0,Q_0)$ on $L^2(\dR)$.
\end{prop}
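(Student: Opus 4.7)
The plan is to establish self-adjointness of $P$ and $Q$ using Lemma \ref{selfadjoint}, then translate the algebraic commutation relations (\ref{xycomm}) into a genuine canonical commutation relation for the pair $(P,Q)$, and finally invoke Kato's integrability theorem \cite{K2} together with the Stone--von Neumann uniqueness theorem.

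First, relations (\ref{xybounded}) and (\ref{normal}) force $x$ and $y$ to be bounded: the identity $1 - \alpha^2 x^*x = (1 + i\alpha x)^*(1 + i\alpha x) \geq 0$ from (\ref{abounded}) gives $\|x\| \leq |\alpha|^{-1}$, and analogously $\|y\| \leq |\beta|^{-1}$. Hence Lemma \ref{selfadjoint} applies verbatim with $(z,\gamma) = (x,\alpha)$ and with $(z,\gamma) = (y,\beta)$, yielding that $P = x^{-1} + i\alpha I$ and $Q = y^{-1} + i\beta I$ are self-adjoint, with natural domains $\cD(P) = x\cH$ and $\cD(Q) = y\cH$.

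The central step is to extract a commutator identity from (\ref{xycomm}). Left- and right-multiplying $xy - yx = -ixy^2x$ by $x^{-1}$ yields $yx^{-1} - x^{-1}y = -iy^2$, that is, $[y,P] = -iy^2 = -i(Q - i\beta I)^{-2}$. This is the resolvent form of the canonical commutation relation $[P,Q] = -iI$, since formally $[P,(Q-i\beta I)^{-1}] = -[P,Q](Q-i\beta I)^{-2}$. I would introduce a common invariant dense core
\begin{align*}
\cD := \bigcap_{m,n\geq 0} x^m y^n \cH,
\end{align*}
analogous to $\cD_\rho$ in Section 2 and dense by a Mittag--Leffler argument as in Lemma \ref{drho}, verify that $P$ and $Q$ both preserve $\cD$, and deduce from $[y,P] = -iy^2$ on $\cD$ that $(PQ - QP)\varphi = -i\varphi$ for all $\varphi \in \cD$. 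The companion relation $xy^* - y^*x = -ix(y^*)^2x$ in (\ref{xycomm}) and its $*$-conjugates supply the matching information needed to ensure essential self-adjointness of $P|_\cD$ and $Q|_\cD$.

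With the CCR in hand on a common invariant core of essentially self-adjoint symmetric operators, Kato's theorem \cite{K2} implies that $P$ and $Q$ satisfy the Weyl commutation relations $e^{isP}e^{itQ} = e^{-ist}e^{itQ}e^{isP}$ for $s,t \in \dR$. The Stone--von Neumann uniqueness theorem then yields that $(P,Q)$ is unitarily equivalent to a direct sum of copies of the Schr\"odinger pair $(P_0,Q_0)$ on $L^2(\dR)$. The main obstacle is the second step: pinning down a concrete dense invariant core on which the algebraic identities become operator equations strong enough to satisfy the hypotheses of Kato's theorem. Once this domain analysis is carried out, the invocation of Kato and Stone--von Neumann is standard.
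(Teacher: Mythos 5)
Your first and last steps (boundedness of $x,y$ from the relations, self-adjointness of $P,Q$ via Lemma \ref{selfadjoint}, and the final Stone--von Neumann argument) agree with the paper, but the middle of your plan rests on a mis-statement of Kato's theorem, and this is a genuine gap. You write that ``with the CCR in hand on a common invariant core of essentially self-adjoint symmetric operators, Kato's theorem implies the Weyl relations.'' That is precisely the hypothesis which is \emph{not} sufficient: by Nelson-type counterexamples, two operators can be essentially self-adjoint on a common invariant dense domain and satisfy $PQ-QP=-iI$ there without their unitary groups satisfying the Weyl relation. The paper itself emphasizes this point in the remark preceding Theorem \ref{intdens} (``it is not sufficient that $A$ and $B$ are selfadjoint operators satisfying relation (\ref{relAB}) on a common core''). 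What Kato's theorem \cite{K2} actually requires, beyond self-adjointness of $P,Q$ and the commutation relation on a domain $\cD\subseteq\cD(PQ)\cap\cD(QP)$, are \emph{range conditions} of the type $(P-i\alpha)(Q-i\beta)\cD=\cH$ and $(Q-i\beta)(P-i\alpha)\cD=\cH$; verifying these is the crux of the proof, and your proposal never addresses them, instead aiming at essential self-adjointness of $P|_\cD$ and $Q|_\cD$, which is the wrong target.

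This is also where your choice of domain works against you. The paper takes $\cD:=xy\cH$, which equals $yx\cH$ by (\ref{xycomm}); on this domain the commutator identity $PQ\varphi-QP\varphi=-i\varphi$ is a short direct computation using $\varphi=yx\psi=xy(I+iyx)\psi$, $(Q-i\beta)^{-1}=y$ and $(P-i\alpha)^{-1}=x$, and the two range conditions are immediate, since $(P-i\alpha)(Q-i\beta)yx\cH=\cH$ and $(Q-i\beta)(P-i\alpha)xy\cH=\cH$. With your domain $\cD=\bigcap_{m,n}x^my^n\cH$ you would need a Mittag--Leffler argument just to get density (and invariance is not obvious, since the family is not nested), and the surjectivity of $(Q-i\beta)(P-i\alpha)$ onto $\cH$ from that smaller domain is exactly what would still have to be proved. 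So the fix is to replace your core by $xy\cH=yx\cH$, drop the essential self-adjointness requirement, and verify the commutation relation together with the two range identities there; then Kato gives the Weyl relation (first for nonnegative parameters, then for all reals) and Stone--von Neumann finishes the proof as you indicate.
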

\begin{proof} The self-adjointness of operators $P$ and $Q$ follows from Lemma  \ref{selfadjoint}.

From the first equations of (\ref{xycomm}) we conclude that $xy \cH=yx \cH$. Let us denote this vector space by $\cD$. Since $\cD(P){=}x\cH$ and $\cD(Q){=}y\cH$ by (\ref{PQ}), we have $\cD\subseteq \cD(PQ)\cap \cD(QP)$.

We show that $PQ\varphi -QP\varphi=-i\varphi$ for $\varphi \in \cD$. Indeed,
if $\varphi =yx \psi$, then by the first equations of (\ref{xycomm}) we derive
\begin{align*}
PQ\varphi- QP\varphi&= (P-i \alpha)(Q-i\beta )\varphi - (Q-i\beta )(P-i\alpha)\varphi \\&=(P-i \alpha)(Q-i\beta )yx\psi -(Q-i\beta )(P-i\alpha)xy(I+i yx)\psi \\ &=
\psi -(I+iyx)\psi=i\varphi.
\end{align*}
Moreover, from the definitions (\ref{PQ}) we obtain $(P-i \alpha)(Q-i\beta )\cD=(P-i \alpha)(Q-i\beta )yx\cH =\cH$ and
$(Q-i\beta )(P-i\alpha )\cD=(Q-i\beta )(P-i\alpha )xy\cH =\cH$.

By the preceding we have shown that $P$ and $Q$
 satisfy the assumptions of a theorem by T. Kato \cite{K2}. The assertion of this theorem states that
\begin{align}\label{weyl}
e^{i\lambda P} e^{i\mu Q} = e^{i \lambda \mu} e^{i \mu Q} e^{i \lambda P}
\end{align}
for nonnegative reals $\lambda$ and $\mu$. That (\ref{weyl}) holds for nonnegative reals obviously implies that (\ref{weyl}) is fulfilled for arbitrary reals $\lambda$ and $\mu$. Thus, $P$ and $Q$ are self-adjoint operators satisfying the Weyl relation. By the Stone--von Neumann uniqueness theorem (see e.g. \cite{Pu}, Theorem 4.3.1), the pair $(P,Q)$ is is unitarily equivalent to a direct sum of Schr\"odinger pairs $(P_0,Q_0)$.
\end{proof}

\begin{prop}\label{schrod2}
Suppose $\rho$ is an $\cS^{-1}$-torsionfree $*$-representation of the $*$-algebra $\cX$. Then the $*$-representation $\pi_\rho$ of $\cA$ is unitarily equivalent to a direct sum of Schr\"odinger representations.
\end{prop}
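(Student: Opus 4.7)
The plan is to apply Proposition \ref{schrod1} to the bounded operators $\rho(x), \rho(y)$ and then identify the resulting Schr\"odinger pair with $\pi_\rho$ through Theorem \ref{assrep}. By Lemma \ref{cond1} the $*$-algebra $\cX$ is algebraically bounded, so $\rho(x)$ and $\rho(y)$ are bounded (and hence closed) operators on $\cH(\rho)$. Torsionfreeness forces $\ker\rho(x) = \ker\rho(y) = \{0\}$, and applying the $*$-homomorphism $\rho$ to the identities (\ref{xybounded})--(\ref{xycomm}) shows that $\rho(x), \rho(y)$ fulfill the hypotheses of Proposition \ref{schrod1}. Consequently
\[
P := \rho(x)^{-1} + i\alpha I, \qquad Q := \rho(y)^{-1} + i\beta I
\]
are self-adjoint, and there is a unitary $U : \cH(\rho) \to \bigoplus L^2(\dR)$ such that $UPU^*$ and $UQU^*$ are direct sums of $P_0$ and $Q_0$, respectively.

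Next, Theorem \ref{assrep} applied to the generators $s_1 = p - i\alpha$ and $s_2 = q - i\beta$ of $\cS$ yields $\pi_\rho(s_j)\varphi = \rho(s_j^{-1})^{-1}\varphi$ on $\cD(\pi_\rho)$, so $\pi_\rho(p) = P|_{\cD(\pi_\rho)}$ and $\pi_\rho(q) = Q|_{\cD(\pi_\rho)}$; the remark after Theorem \ref{assrep} further shows that $\cD(\pi_\rho)$ is a core for both $P$ and $Q$. Transported via $U$, the representation $U\rho(\cdot)U^*$ of $\cX$ acts block-diagonally on $\bigoplus L^2(\dR)$ (since its generators $x, y$ do), so it coincides with the direct sum of copies of the single $*$-representation $\rho_0$ of $\cX$ on $L^2(\dR)$ defined by $\rho_0(x) = (P_0 - i\alpha I)^{-1}$ and $\rho_0(y) = (Q_0 - i\beta I)^{-1}$. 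Because the assignment $\rho \mapsto \pi_\rho$ from (\ref{defpi}) is manifestly compatible with direct sums of $*$-representations, $U\pi_\rho U^*$ is the corresponding direct sum of $\pi_{\rho_0}$.

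It remains to identify $\pi_{\rho_0}$ with the Schr\"odinger representation $\pi_0$ on $\cS(\dR)$. Both send $p$ to $P_0$ and $q$ to $Q_0$ wherever their domains overlap, so the task reduces to the domain equality $\cD(\pi_{\rho_0}) = \cS(\dR)$. The containment $\cS(\dR) \subseteq \bigcap_{s \in \cS}\cD(\pi_0(s)) = \cD(\pi_{\rho_0})$ is immediate from invariance of $\cS(\dR)$ under every $\pi_0(s)$, whereas the reverse inclusion $\cD(\pi_{\rho_0}) \subseteq \bigcap_{n,m \geq 0}\cD(P_0^n Q_0^m) = \cS(\dR)$ relies on the classical $L^2$-characterization of the Schwartz space as the space of common smooth vectors for the Schr\"odinger pair. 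I expect this domain identification, together with the careful check that $U\pi_\rho U^*$ is genuinely the closed direct sum of Schr\"odinger representations (as opposed to merely the algebraic one), to be the main technical hurdle; everything else is a formal consequence of Proposition \ref{schrod1} and Theorem \ref{assrep}.
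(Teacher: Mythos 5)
Your proposal is correct and follows essentially the same route as the paper: apply Proposition \ref{schrod1} to $\rho(x),\rho(y)$ (bounded by algebraic boundedness, injective by torsionfreeness), use that $\rho\mapsto\pi_\rho$ respects unitary equivalence and direct sums to reduce to a single Schr\"odinger pair $(P_0,Q_0)$ on $L^2(\dR)$, and then identify $\cD(\pi_\rho)$ with $\cS(\dR)$ and the action of $p,q$ with the Schr\"odinger representation. Your write-up merely spells out in more detail the direct-sum compatibility and the two inclusions giving $\cD(\pi_{\rho_0})=\cS(\dR)$, which the paper states more briefly.
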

\begin{proof}
Since the $*$-algebra $\cX$ is algebraically bounded by Lemma \ref{cond1}, all operators of $\rho(\cX)$ are bounded. The operators $\rho(x)$ and $\rho(y)$ satisfy the relations (\ref{xybounded})--(\ref{xycomm}) and have trivial kernels because $\rho$ is torsionfree. Therefore, by Proposition \ref{schrod1} the pair $(P,Q)$ defined by (\ref{PQ}) (with $x$ and $y$ replaced by $\rho(x)$ and $\rho(y)$, respectively) is unitarily equivalent to a direct sum of Schr\"odinger pairs.
The map $\rho \to \pi_\rho$ according to Theorem \ref{assrep}
respects unitary equivalences and direct sums, so it suffices to prove the assertion in the case when $P=P_0$ and $Q=Q_0$ on the Hilbert space $L^2(\dR)$.
By (\ref{domrho}) the domain $\cD_\rho=\cD(\pi_\rho)$
is the intersection of ranges of all finite products of operators $(P-\alpha i)^{-1}=\rho(x) =\rho(s_1^{-1})$, $(Q- \beta i)^{-1}=\rho(y)=\rho(s_2^{-1})$ and their adjoints. Hence $\cD_\rho=\cD(\pi_\rho)$ is the Schwartz space $\cS(\dR)$ and for $\varphi \in \cD(\pi_\rho)$ we have
$$\pi_\rho(p- \alpha i)\varphi = \pi_\rho(s_1)\varphi=\rho(s_1^{-1})^{-1}\varphi = \rho(x)^{-1}\varphi = (P- \alpha i)\varphi=-i\varphi^\prime - \alpha i \varphi, $$
so  $\pi_\rho(p) \varphi =-i\varphi^\prime$. Similarly, $\pi_\rho(q)\varphi =t \varphi$. That is, $\pi_\rho$ is the Schr\"odinger representation.
\end{proof}

Now let $c$ be an arbitrary nonzero element of the Weyl algebra $\cA$. Because $\{p^nq^k; k,n \in \dN_0\}$ and $\{q^np^k; k,n \in \dN_0\}$ are vector space bases of $\cA$, we can write $c$ as
\begin{align}\label{crep1}
c = \sum_{j=0}^{d_1} \sum_{l=0}^{d_2}   \gamma_{jl}p^jq^l= \sum_{n=0}^{d_2} f_n(p)q^n =\sum_{k=0}^{d_1} g_k(q)p^k,
\end{align}
where $\gamma_{jl}$ are complex numbers and $f_n(p)\in \dC[p]$, $g_k(q)\in \dC[q]$ are polynomials all of them uniquely determined by $c$.  We choose $d_1$ and $d_2$ such that there are numbers $j_0, l_0\in \dN_0$ for which $\gamma_{d_1,l_0}\neq 0$ and $\gamma_{j_0,d_2}\neq 0$. Set $d(c)=(d_1,d_2)$. It is easily checked that $d$ defines a multi-degree on $\cA$ satifying conditions $(d1)$--$(d3)$ and $(A3)$--$(A5)$. Note that $f_{d_2}\neq0$ and $g_{d_1}\neq 0$.
\begin{thm}\label{pqpos}
Let $c=c^*$ be a nonzero element of the Weyl algebra $\cA$ with multi-degree $d(c)=(2m_1,2m_2)$, where $m_1,m_2\in \dN_0$. Suppose that:\\
(I) There exists a bounded self-adjoint operator $T > 0$ on $L^2(\dR)$ such that $\pi_0(c) \geq T$.\\
(II) $\gamma_{2m_1,2m_2}\neq 0$ and both polynomials $f_{2m_2}$ and $g_{2m_1}$ are positive on the real line.\\
Then there exists an element $s \in \cS$ such that $s^*cs \in \sum~\cA^2$.
\end{thm}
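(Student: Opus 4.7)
The plan is to apply Theorem \ref{abstpos2}. The hypotheses $(O)$, $(IA)$, $(AB)$ are in force by Lemma \ref{cond1}, and conditions $(d1)$--$(d3)$, $(A3)$--$(A5)$ for the multi-degree $d$ with $d(p)=(1,0)$, $d(q)=(0,1)$ are verified in the discussion preceding the theorem. Choose $t := s_1^{m_1} s_2^{m_2}$, so that $d(tt^*) = (2m_1, 2m_2) = d(c)$; Lemma \ref{a4}(ii) then gives $t^{-1}c(t^*)^{-1} \in \gX$. Assumption (i) of Theorem \ref{abstpos2} follows immediately from Proposition \ref{schrod2} combined with the irreducibility statement of Theorem \ref{assrep}: an irreducible $\cS^{-1}$-torsionfree $*$-representation $\rho$ of $\gX$ yields an irreducible $\pi_\rho$ unitarily equivalent to a direct sum of Schr\"odinger representations, hence to $\pi_0$ itself, and hypothesis (I) transports via the intertwiner to a bounded positive self-adjoint operator $T_\rho$ with $\pi_\rho(c) \geq T_\rho$.

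For assumption (ii), note first that $\gI_{s_1} = \gI_{s_1^*}$ and $\gI_{s_2} = \gI_{s_2^*}$ (each ideal contains both $s^{-1}$ and $(s^*)^{-1}$), so only the two quotients $\gX_{s_1}$ and $\gX_{s_2}$ need to be examined. Relations (\ref{normal})--(\ref{xycomm}), reduced modulo $\gI_{s_1}$, show that $\gX_{s_1}$ is the commutative $*$-algebra generated by the classes of $y,y^*$ subject to $yy^*=y^*y$ and $y-y^* = 2i\beta y^*y$; its irreducible $*$-representations are therefore the characters $\rho_{s_1,\lambda}$, parametrized by $\lambda \in \dR \cup \{\infty\}$ via $\rho_{s_1,\lambda}(y) = (\lambda-i\beta)^{-1}$ for $\lambda \in \dR$ and $\rho_{s_1,\infty}(y) = 0$. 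The algebra $\gX_{s_2}$ is described symmetrically by characters $\rho_{s_2,\mu}$, $\mu \in \dR \cup \{\infty\}$.

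The central computation is to establish
\[
\rho_{s_1,\lambda}\bigl(t^{-1}c(t^*)^{-1}\bigr) \;=\; \frac{g_{2m_1}(\lambda)}{(\lambda^2+\beta^2)^{m_2}}\ (\lambda\in\dR), \qquad \rho_{s_1,\infty}\bigl(t^{-1}c(t^*)^{-1}\bigr) \;=\; \gamma_{2m_1,2m_2},
\]
together with the symmetric expressions $f_{2m_2}(\mu)/(\mu^2+\alpha^2)^{m_1}$ and $\gamma_{2m_1,2m_2}$ for $\rho_{s_2,\mu}$. Once this is known, hypothesis (II), together with the elementary observation that a polynomial of even degree positive on $\dR$ has strictly positive leading coefficient, ensures that all these values are strictly positive, whence Theorem \ref{abstpos2} produces an element $s \in \cS_O = \cS$ with $s^*cs \in \sum\cA^2$.

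The principal obstacle is this central computation. To carry it out, decompose $c = \sum_{k=0}^{2m_1} g_k(q) p^k$ and process each summand in $\gX$: since $p$ commutes with $x$ and $x^*$, one may isolate the radial factor $p^k(xx^*)^{m_1} = p^k(p^2+\alpha^2)^{-m_1}$, and expanding $p^k$ in powers of $p^2+\alpha^2$ shows that this factor lies in $\gI_{s_1}$ when $k<2m_1$ and equals $1$ modulo $\gI_{s_1}$ when $k=2m_1$. The commutators produced by moving $g_k(q)$ past $x^{m_1}$ or $(x^*)^{m_1}$, governed by $[q,x] = ix^2$ and $[q,x^*] = -i(x^*)^2$, carry extra factors of $x$ or $x^*$ and so also fall in $\gI_{s_1}$. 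What survives for $k=2m_1$ is $y^{m_2} g_{2m_1}(q)(y^*)^{m_2} = g_{2m_1}(q)(yy^*)^{m_2} = g_{2m_1}(q)(q^2+\beta^2)^{-m_2}$, a rational function in $q$ which rewrites as a polynomial in $y, y^*$ by the same kind of expansion and on which $\rho_{s_1,\lambda}$ takes the displayed value. Because the multi-indices $d(s_1) = (1,0)$ and $d(s_2) = (0,1)$ each have a zero component, Propositions \ref{rhonull1}--\ref{rhonull3} cannot be invoked verbatim to supply these vanishings; the partial-fraction and commutator manipulations have to be performed by hand.
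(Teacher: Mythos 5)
Your proposal is correct and follows the paper's skeleton: reduce to Theorem \ref{abstpos2}, get assumption (i) from Proposition \ref{schrod2} together with the irreducibility statement of Theorem \ref{assrep} (so the only irreducible $\pi_\rho$ is $\pi_0$), and verify assumption (ii) by evaluating $z=t^{-1}c(t^*)^{-1}$ in the quotients $\gX_{s_1},\gX_{s_2}$; your limit values $g_{2m_1}(\lambda)/(\lambda^2+\beta^2)^{m_2}$ and $\gamma_{2m_1,2m_2}$ are exactly the paper's $h({\sf y})=({\sf y}\bar{\sf y})^{m_2}g_{2m_1}({\sf q})$ and $h(0)=\gamma_{2m_1,2m_2}$. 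Where you diverge is in the mechanics of (ii): the paper kills the lower-order terms by three invocations of Proposition \ref{rhonull3} and then applies the spectral theorem to the normal operator $\rho_{s_1}(y)$ in an arbitrary representation of $\gX_{s_1}$, whereas you pass to characters of the commutative quotient and do the reductions by hand via partial fractions and the commutators $[q,x]$, $[p,y]$ (note the sign: $[q,x]=-ix^2$, immaterial here). Your stated reason for bypassing Propositions \ref{rhonull1}--\ref{rhonull3} is a fair reading: with the paper's componentwise-strict definition of $<$, the requirement $\gr<d(s_1)=(1,0)$ is literally unsatisfiable, so those propositions do not apply verbatim; they do apply under the evidently intended relaxed inequality (strictness needed only in the component where $d(s_1)$ is nonzero), and the paper itself hedges with the parenthetical remark that all these vanishings can be checked directly from the commutation rules --- which is exactly the route you take. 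Two small cautions for the by-hand version: when exhibiting a term as lying in $\gI_{s_1}$ keep the sandwiching factors attached (e.g.\ $y^{m_2}g_k(q)$ alone is not in $\gX$, while $y^{m_2}g_k(q)(y^*)^{m_2}$ is, by Lemma \ref{a4}(ii)); and for the character description you only need that every irreducible (automatically bounded, by (AB)) $*$-representation of the commutative quotient is a character whose value at $y$ lies on the circle, not that every such point actually occurs. With these routine points your argument buys a self-contained verification of (ii) independent of the multi-grading machinery of Section 4, at the cost of redoing by explicit computation what Proposition \ref{rhonull3} packages.
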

\begin{proof} Recall that $\cS=\cS_O$ and all results from Sections 4 and 5 apply, because the corresponding assumptions are fulfilled.
Set $t:=s_2^{m_2}s_1^{m_1}$. Since $d(c)=(2m_1,2m_2)=d(t^2)$, it follows from Lemma \ref{a4}(ii) that  $z:=t^{-1}c (t^*)^{-1}=x^{m_1}y^{m_2}c (y^*)^{m_2}(x^*)^{m_1}$ is in $\cX$.

The assertion will follow from Theorem \ref{abstpos2} once assumptions (i) and (ii) therein are established. Assumption (i) is a consequence of assumption (I), since the only irreducible $*$-representations $\pi_\rho$ is the Schr\"odinger representation $\pi_0$ by Proposition \ref{schrod2}.

We prove that $\rho_{s_1}(z)>0$ for each $*$-representation $\rho_{s_1}$ of $\gX_{s_1}$. (By Theorem \ref{abstpos2} we  could assume that $\rho_s$ is irreducible,
but this does not simplify our reasoning.) If $k< 2m_1$, then $d(g_k(q)p^k)_1 <2m_1$, so that $g_k(q)p^k <_{s_1} t^*t$ and hence $\rho_{s_1}(t^{-1}g_k(q)p^k(t^*)^{-1})=0$ by Proposition \ref{rhonull3}. Likewise,  $d(p^{m_1}g_{2m_1}(q)p^{m_1}{-} g_{2m_1}(q)p^{2m_1})_1 <2m_1$ and so $\rho_{s_1}(t^{-1}(p^{m_1}g_{2m_1}(q)p^{m_1}{-} g_{2m_1}(q)p^{2m_1})(t^*)^{-1})=0 $ again by Proposition \ref{rhonull3}.
Therefore, by (\ref{crep1}) we have
\begin{align}\label{rhos1}
\rho_{s_1}(z)=\rho_{s_1}(x^{m_1}y^{m_2}p^{m_1}g_{2m_1}(q)p^{m_1} (y^*)^{m_2}(x^*)^{m_1}).
\end{align}
Since $xy=yx(1{-}ixy)$ by (\ref{xycomm}),
$x^{m_1}y^{m_2}-y^{m_2}x^{m_1}$ and $(y^*)^{m_2}(x^*)^{m_1})-(x^*)^{m_1}(y^*)^{m_2}$ are linear combinations of terms $r^{-1}$, where $r\in \cS$ and $d(r)>(m_1,m_2)$. Hence from  Proposition \ref{rhonull3} we get
\begin{align}\label{rhos2}
 \rho_{s_1}(x^{m_1}y^{m_2}p^{m_1}g_{2m_1}(q)p^{m_1} (y^*)^{m_2}(x^*)^{m_1}-y^{m_2}x^{m_1}p^{m_1}g_{2m_1}(q)p^{m_1} (x^*)^{m_1}(y^*)^{m_2})=0.
\end{align}
From the relation $xp=1{+}\alpha ix$ it follows that $x^{m_1}p^{m_1}-1$ and $p^{m_1}(x^*)^{m_1}-1$ are linear combinations of $x^j=s_1^{-j}$, where $1\leq  j \leq m_1$. Therefore, we have
\begin{align}\label{rhos3}
\rho_{s_1}(z)=\rho_{s_1}(y^{m_2}x^{m_1}p^{m_1}g_{2m_1}(q)p^{m_1} (x^*)^{m_1}(y^*)^{m_2}-y^{m_2}g_{2m_1}(q)(y^*)^{m_2})=0
\end{align}
by Proposition \ref{rhonull3}. (All facts derived above from Proposition \ref{rhonull3} can be also verified directly by using the commutation rules between $p, q, x,$ and $y$.) Combining equations (\ref{rhos1})--(\ref{rhos3}) we obtain
$\rho_{s_1}(z) =\rho_{s_1}((yy^*)^{m_2}g_{2m_1}(q))$.
Let us write the polynomial $g_{2m_2}$ as $g_{2m_1}(q)=\sum_{l=0}^{2m_2} \gamma_l q^l$. Clearly, $\gamma_{2m_2}=\gamma_{2m_1,2m_2}$.  Since $q=y^{-1}+\beta i$ by the definition  of $y=s_2^{-1}$ and $y^*=y(1-2\beta iy^*)$ by (\ref{xybounded}),
\begin{align}\label{ymg}
\rho_{s_1}(z)=\rho_{s_1}((yy^*)^{m_2}g_{2m_1}(q))= (I-2\beta i \rho_{s_1}(y)^*)^{m_2} \sum_{l=0}^{2m_2} \gamma_l (I+\beta i \rho_{s_1}(y))^l \rho_{s_1}(y)^{2m_2-l}
\end{align}
is a polynomial, say $h(\rho_{s_1}(y))$, of the normal operator $\rho_{s_1}(y)$ and its adjoint. Hence the spectrum of $\rho_{s_1}(z)$ is the set of numbers $h({\sf y})$, where $\sf y$ is in the spectrum of  $\rho_{s_1}(y)$. Since $y-y^*=2\beta iy^*y$ by (\ref{xybounded}), $\sf y$ belongs to the circle ${\sf y}-{\bar {\sf y}}=\beta i {\bar{\sf y}}{\sf y}$ of the complex plane. If ${\sf y}=0$, then $h(0)=\gamma_{2m_2}=\gamma_{2m_1,2m_2}>0$ by assumption (II). If $\sf y$ is a nonzero number of this circle, then $\sf y$ is of the form $({\sf q}-\beta i)^{-1}$ with  ${\sf q} \in \dR$. Inserting this into (\ref{ymg}), we compute $h(\sf y)= (y{\overline y})^{m_2} g_{2m_1}(\sf q)$. Since $g_{2m_1}(\sf q) >0$ by assumption (II), we get $h(\sf y)> 0$. Thus we have shown that the spectrum of the normal operator $\rho_{s_1}(z)$ is contained in $(0,+\infty )$, so that $\rho_{s_1}(z)>0$.

A similar reasoning using the positivity of $f_{2m_2}$ instead of that of $g_{2m_1}$ yields $\rho_{s_2}(z)>0$. Hence assumption (ii) of Theorem \ref{abstpos2} is satisfied.
\end{proof}

\section{A Resolvent Approach to Integrable Representations of the Enveloping Algebra of the
$ax+b$-Group}

Throughout this and the next section we denote by $G$ the affine group of the line, that is, $G=\{(e^\gamma,\delta); \gamma, \delta \in \dR\}$ with multiplication rule $(e^{\gamma_1},\delta_1)(e^\gamma_2,\delta_2)=(e^{\gamma_1+ \gamma_2}, e^{\gamma_1} \delta_2 +\delta_1)$ and by $\mathfrak{g}$ the Lie algebra of the Lie group $G$. Recall that $\cg$  has a vector space basis $\{\sf a,\sf b\}$ satisfying the commutation relation $[\sf a,\sf b]=\sf b$. The exponential map $\rm exp$ of $\mathfrak{g}$ into $G$ is given by
${\rm exp}~ \gamma {\sf a}= (e^\gamma,0)$ and ${\rm exp}~ \gamma {\sf b}= (1,\gamma)$, where $\gamma \in \dR$.

We  need a few notions on Lie group representations (see e.g. \cite{S1}, Chapter 10, or \cite{W}, Chapter 4, for more details).
By a unitary representation of $G$ we mean a strongly continuous homomorphism $U$ of $G$ into the unitary group of a Hilbert space $\cH(U)$ and by $dU$ we denote the associated $*$-representation of the enveloping algebra $\cE(\cg)$ of the Lie algebra  $\cg$ on the dense vector space $\cD^\infty(U)$ of $C^\infty$-vectors of $U$. If ${\sf c} \in \cg$, then $\partial U(\sf c)$ denotes the infinitesimal generator of the unitary group $U(e^{\gamma {\sf c}})$, that is, $e^{\gamma \partial U(\sf c)}=U(e^{\gamma \sf c})$ , $\gamma \in \dR$, and we have $dU({\sf c})\varphi =\partial U(\sf c)\varphi$ for $\varphi \in \cD^\infty(U)$. Note that the operator $i \partial U(\sf c)$ is self-adjoint.

The next proposition
and its subsequent theorem
characterize integrable representations of the Lie algebra $\cg$ in terms of resolvents of the two generators.
\begin{prop}\label{dUAB}
Suppose that $U$ is a unitary reresentation of $G$. Let $\alpha $ and $ \beta$ be real numbers such that $|\alpha | >1$, $\beta \neq 0$, and set $x_0=(A-\alpha i)^{-1}$, $x_1=(A-(\alpha{+}1)i)^{-1}$ and $y=(B-\beta i)^{-1}$, where $A:=i\partial U({\sf a})$ and $B:=i\partial U(\sf b)$. Then we have the relations
\begin{align}\label{x0x1y}
x_0-x_0^* =2\alpha i~ x_0^*x_0&= 2 \alpha i~x_0x_0^*,~ y-y^*= 2 \beta i~ y^*y= 2 \beta i~ yy^*,\\
\label{x0resolvent}
&x_0-x_1=-i x_1x_0=-i x_0x_1,\\
\label{xy}
&x_0y-yx_1=-\beta yx_1x_0y .
\end{align}
\end{prop}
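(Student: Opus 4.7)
My plan is to treat the first three equations as routine resolvent identities and to concentrate the work on (\ref{xy}), where the Lie relation $[{\sf a},{\sf b}]={\sf b}$ actually enters.

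First I would observe that since $U(\exp \gamma {\sf a})$ and $U(\exp \gamma {\sf b})$ are unitary one-parameter groups, their infinitesimal generators are skew-adjoint, so $A = i\partial U({\sf a})$ and $B = i\partial U({\sf b})$ are self-adjoint. The hypothesis $|\alpha|>1$ ensures $\alpha,\alpha+1\neq 0$, and $\beta\neq 0$ by assumption, so $\alpha i,\,(\alpha+1)i,\,\beta i$ all lie off $\dR\supseteq \sigma(A)\cup\sigma(B)$ and $x_0, x_1, y$ are bounded normal operators with trivial kernels. Then (\ref{x0x1y}) follows from the standard identity
\begin{align*}
(T-\lambda)^{-1} - (T-\bar\lambda)^{-1} = (\lambda-\bar\lambda)(T-\lambda)^{-1}(T-\bar\lambda)^{-1}
\end{align*}
for self-adjoint $T$, applied with $T=A,\,\lambda=\alpha i$ and $T=B,\,\lambda=\beta i$; the coincidence of $z^*z$ with $zz^*$ comes from normality. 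Equation (\ref{x0resolvent}) is the first resolvent identity of $A$ at the parameters $\alpha i$ and $(\alpha+1)i$, with the commutativity $x_0x_1 = x_1 x_0$ built in from the functional calculus of $A$.

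For (\ref{xy}), the plan is to use that the bracket $[{\sf a},{\sf b}]={\sf b}$ descends under $dU$ to $[A,B]=iB$ on $\cD^\infty(U)$. Rearranging gives
\begin{align*}
B(A-\alpha i) = (A-(\alpha+1)i)\,B \quad \text{on } \cD^\infty(U);
\end{align*}
multiplying by $x_1$ on the left and $x_0$ on the right yields the intertwining
\begin{align*}
Bx_0 = x_1 B.
\end{align*}
A direct algebraic computation then gives
\begin{align*}
(B-\beta i)\,x_0 - x_1\,(B-\beta i) = \beta i\,(x_1 - x_0),
\end{align*}
and sandwiching with $y = (B-\beta i)^{-1}$ on both sides collapses the $(B-\beta i)$ factors to
\begin{align*}
x_0 y - y x_1 = \beta i\,y\,(x_1-x_0)\,y.
\end{align*}
Substituting $x_1 - x_0 = i\,x_1 x_0$ from (\ref{x0resolvent}) and using $\beta i\cdot i = -\beta$ delivers $x_0 y - y x_1 = -\beta\, y x_1 x_0\, y$, as required.

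The one step that needs genuine care is the passage from the commutation $B(A-\alpha i)=(A-(\alpha+1)i)B$ on $\cD^\infty(U)$ to the identity $Bx_0 = x_1 B$ between unbounded and bounded operators. The standard route is to verify it first on the dense subspace $(A-\alpha i)\,\cD^\infty(U)$ (where every factor acts in $\cD^\infty(U)$) and then extend by closedness, using that $\cD^\infty(U)$ is a core for $B$; alternatively, one can integrate the adjoint-action formula $U(e^\gamma,0)\,B\,U(e^\gamma,0)^{-1} = e^\gamma B$ against $ie^{-\alpha t}$ over $(0,\infty)$ to produce $x_1 B = B x_0$ by a single Bochner-integral manipulation. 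Once this intertwining is settled, the remaining steps are purely algebraic identities between bounded operators.
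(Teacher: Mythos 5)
Your treatment of (\ref{x0x1y}) and (\ref{x0resolvent}) is fine and matches the paper, and the closing algebra is correct: \emph{if} one has the strong intertwining relation $x_0\cD(B)\subseteq\cD(B)$ and $Bx_0\psi=x_1B\psi$ for $\psi\in\cD(B)$, then sandwiching with $y$ and substituting $x_1-x_0=ix_1x_0$ gives (\ref{xy}) directly, which is even slightly cleaner than the paper's route (the paper first gets $x_0y-yx_1=-\beta x_0y^2x_1$ and then needs a Neumann-series argument, based on $\norm{\beta x_0y}\leq|\alpha|^{-1}<1$, to commute $x_0y$ with $yx_1$ and reach the stated form). The problem is your primary argument for the intertwining itself. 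You verify $Bx_0\psi=x_1B\psi$ only for $\psi\in(A-\alpha i)\cD^\infty(U)$, and to pass to all of $\cD(B)$ by closedness of $B$ you would need \emph{this image space}, not $\cD^\infty(U)$, to be a core for $B$; a dense subspace of a core need not be a core, and nothing you invoke supplies this. The gap is not cosmetic: your primary route uses only the infinitesimal relation $[A,B]\varphi=iB\varphi$ on $\cD^\infty(U)$ together with core/invariance properties, and if such data sufficed, the resolvent relation (\ref{xy}) would hold for \emph{any} pair of self-adjoint operators satisfying the Lie relation on a common invariant core — which is exactly what the paper warns is false just before Theorem \ref{intdens} (and what makes Theorem \ref{axbresol} nontrivial: the resolvent relations imply integrability, the infinitesimal relation on a core does not). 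A telltale symptom is that the hypothesis $|\alpha|>1$ is never used in your primary route.

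Your one-sentence alternative is the right repair, and it is essentially the paper's proof in a different order: the paper takes the group relation $e^{i\gamma A}e^{i\delta e^{\gamma}B}=e^{i\delta B}e^{i\gamma A}$, Laplace-transforms in $\delta$ to get $e^{i\gamma A}(e^{\gamma}B-\beta i)^{-1}=(B-\beta i)^{-1}e^{i\gamma A}$, differentiates in $\gamma$ at $0$ on $\cD(A)$, and then uses the Neumann series; you instead would Laplace-transform in $\gamma$ to build $x_0$ and $x_1$ and commute $B$ through. If you take that route you must justify pulling the closed operator $B$ inside the Bochner integral: from the adjoint action, $\norm{Be^{i\gamma A}\psi}=e^{\pm\gamma}\norm{B\psi}$ for $\psi\in\cD(B)$, so the integrand $e^{\alpha\gamma}Be^{i\gamma A}\psi$ (on the appropriate half-line, depending on the sign of $\alpha$) is integrable precisely because $|\alpha|>1$; this is where the hypothesis enters and it needs to be stated. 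With that justification written out, your argument is a correct and mildly streamlined variant of the paper's; without it, the proof of (\ref{xy}) is incomplete.
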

\begin{proof} Equations (\ref{x0x1y}) and (\ref{x0resolvent}) follow easily from the definitions of $x_0$, $x_1$ and $y$.

We  prove the commutation relation (\ref{xy}). From the relation $e^{-\gamma {\sf a}} e^{-\delta e^\gamma {\sf b}} =e^{-\delta {\sf b}} e^{-\gamma {\sf a}}$ in the group $G$ it follows that
\begin{align}\label{eAB}
e^{i\gamma A} e^{i\delta e^\gamma B} =e^{i\delta B} e^{i\gamma A}~~ {\rm for}~~ \gamma, \delta \in \dR.
\end{align}
First assume that $\beta <0$. Then, if $C$ is a self-adjoint operator, we have (see e.g. \cite{K1}, p. 482)
\begin{align*}
(C-\beta i)^{-1} =-i \int_0^\infty e^{\beta \lambda} e^{i\lambda C} d\lambda.
\end{align*}
Multiplying
(\ref{eAB}) by $e^{\beta \lambda}$ and integrating on $[0,+\infty)$ by using the preceding formula we get
\begin{align}\label{expresAB}
e^{i\gamma A}(e^{\gamma}B-\beta i )^{-1}=(B -\beta i)^{-1} e^{i\gamma A} ~{\rm for}~ \gamma \in \dR.
\end{align}
Applying the involution to (\ref{expresAB}) and multiplying then by $e^{-\gamma}$ it follows that formula (\ref{expresAB}) holds in the case $\beta >0$ as well. We now apply both sides  of (\ref{expresAB}) to a vector $\varphi\in \cD(A)$ and differentiate at $\gamma=0$. Then we obtain
\begin{align}
iA( B-\beta i )^{-1}\varphi -  B(B- \beta i)^{-2}\varphi = (B-\beta i)^{-1} iA \varphi.
\end{align}
Since $y=(B-\beta i)^{-1}$, the latter yields $(A-\alpha i)y\varphi -\beta y^2 \varphi=y(A-(\alpha {+}1) i)\varphi.$
If $\psi \in \cH$, then $\varphi:=x_1\psi \in \cD(A)$ and so $(A-\alpha i)yx_1 \psi -\beta y^2x_1 \psi = y\psi$. Multiplying by $x_0$ from the left we derive
\begin{align}\label{x0y2x1}
x_0y -yx_1= -\beta x_0y^2x_1,
\end{align}
so that $x_0y=(I-\beta x_0y)yx_1$. From the definitions of $x_0$ and $y$ it follows immediately that $||\beta x_0y|| \leq |\alpha|^{-1}< 1$. Therefore, we have $(I-\beta x_0y)^{-1}=\sum_{n=0}^\infty \beta^n (x_0y)^n$ and hence
\begin{align*}
yx_1=(I-\beta x_0y)^{-1}x_0y= \sum_{n=0}^\infty \beta^n (x_0y)^{n+1}.
\end{align*}
The latter implies that $(x_0y)yx_1=yx_1(x_0y)$. Inserting this into (\ref{x0y2x1}) we obtain (\ref{xy}).
\end{proof}
\begin{thm}\label{axbresol}
Let $\alpha, \beta \in \dR$, $\alpha< -1$ and $\beta \neq 0$. Suppose that $x_0$, $x_1$ and $y$ are bounded linear operators on a Hilbert space $\cH$  satifying the equations (\ref{x0x1y})--(\ref{xy}).
Assume that
${\rm ker}~ x_0={\rm ker}~y =\{0\}$ and define
\begin{align}\label{AB}
A:= x_0^{-1} + \alpha i I~~{\rm and }~~ B:= y^{-1} +\beta iI.
\end{align}
Then $A$ and $B$ are self-adjoint operators on $\cH$
and there exists a unitary representation $U$ of the group $G$ on $\cH$ such that $i \partial U({\sf a}) =A$ and $i \partial U({\sf b}) =B$.
\end{thm}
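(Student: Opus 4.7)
The plan proceeds in several steps. First, self-adjointness of $A$ and $B$ is an immediate application of Lemma \ref{selfadjoint} to the pairs $(z,\gamma)=(x_0,\alpha)$ and $(z,\gamma)=(y,\beta)$; its hypotheses (boundedness, normality, trivial kernel, $z-z^*=2\gamma iz^*z$) are exactly the first line of (\ref{x0x1y}), giving $x_0=(A-\alpha i)^{-1}$ and $y=(B-\beta i)^{-1}$. Next I would sandwich (\ref{xy}) between $y^{-1}=B-\beta i$ on both sides to obtain $(B-\beta i)x_0-x_1(B-\beta i)=-\beta x_1x_0$ on $\cD(B)$ and invoke (\ref{x0resolvent}) in the form $\beta i(x_0-x_1)=\beta x_1x_0$; the two $\beta x_1x_0$ contributions then cancel and one is left with the striking $\beta$-independent identity
\begin{equation*}
Bx_0=x_1B\quad\text{on }\cD(B),
\end{equation*}
which is the integrated form of the formal Lie bracket $[A,B]=iB$.

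The heart of the proof is to establish the resolvent intertwining relation
\begin{equation*}
e^{i\tau A}\,y\,e^{-i\tau A}=e^{\tau}(B-\beta e^{\tau}i)^{-1},\qquad\tau\in\dR. \tag{$\ast$}
\end{equation*}
Putting $y_\tau:=(B-\beta e^{\tau}i)^{-1}$, $G(\tau):=e^{\tau}y_\tau$, and $H(\tau):=e^{-i\tau A}G(\tau)e^{i\tau A}$, one has $H(0)=y$, and the goal is to show $H$ is $\tau$-constant. Weakly differentiating on a dense domain yields
\begin{equation*}
\frac{d}{d\tau}\langle H(\tau)\varphi,\psi\rangle=\langle\bigl[G'(\tau)-i[A,G(\tau)]\bigr]e^{i\tau A}\varphi,\,e^{i\tau A}\psi\rangle.
\end{equation*}
A direct computation using $y_\tau B=I+\beta ie^{\tau}y_\tau$ gives $G'(\tau)=e^{\tau}y_\tau By_\tau$, while applying $Bx_0=x_1B$ with $\beta$ replaced by $\beta e^{\tau}$ (permissible precisely because the identity is $\beta$-independent) together with the standard resolvent commutator formula yields $[A,y_\tau]=-iy_\tau By_\tau$, hence $i[A,G(\tau)]=e^{\tau}y_\tau By_\tau$. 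The two expressions match, so $H$ is constant and $(\ast)$ follows.

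From $(\ast)$ and the spectral theorem for $B$ one extends to $e^{i\tau A}f(B)e^{-i\tau A}=f(e^{-\tau}B)$ for every bounded Borel $f$; taking $f(s)=e^{i\sigma s}$ recovers the Weyl-type relation (\ref{eAB}). I then define $U(e^{\gamma},\delta):=e^{-i\delta B}e^{-i\gamma A}$; strong continuity is Stone's theorem, multiplicativity is a direct computation using (\ref{eAB}) and the group law $(e^{\gamma_1},\delta_1)(e^{\gamma_2},\delta_2)=(e^{\gamma_1+\gamma_2},\delta_1+e^{\gamma_1}\delta_2)$, and reading off the generators of $\gamma\mapsto U(e^{\gamma},0)$ and $\delta\mapsto U(1,\delta)$ yields $i\partial U({\sf a})=A$ and $i\partial U({\sf b})=B$.

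The main obstacle lies in the intertwining step: the formal commutator $[A,G(\tau)]$ must be realized as a densely defined operator, which requires verifying $y_\tau\cD(A)\subseteq\cD(A)$, and the strong differentiability of $H(\tau)$ against vectors in an $e^{i\tau A}$-invariant common core of $A$ and $B$ must be established with care. The $\beta$-independence of $Bx_0=x_1B$ is the decisive input, because it supplies the commutation of $B$ with $x_0$ uniformly at all shifted parameters; without it the ODE argument in the key step cannot be closed.
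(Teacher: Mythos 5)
Your preliminary reductions are essentially correct: Lemma \ref{selfadjoint} gives self-adjointness exactly as in the paper, and sandwiching (\ref{xy}) by $y^{-1}=B-\beta i$ does yield $x_0\cD(B)\subseteq\cD(B)$ and $Bx_0\varphi=x_1B\varphi$ for $\varphi\in\cD(B)$, from which one can indeed recover the analogue of (\ref{xy}) at any shifted parameter $\beta e^\tau$; the computation $G'(\tau)=e^\tau y_\tau By_\tau$ is also right. The genuine gap is the step you yourself label the ``main obstacle'': realizing $[A,y_\tau]=-iy_\tau By_\tau$ as an identity of densely defined operators, i.e.\ proving $y_\tau\cD(A)\subseteq\cD(A)$ together with the commutator formula on $\cD(A)$, and then justifying the differentiation of $H(\tau)$. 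This is not a routine ``with care'' verification to be deferred: it is where the analytic difficulty of the theorem is concentrated, and it is also the only place where the hypothesis $\alpha<-1$ (at least $|\alpha|>1$) can enter --- your sketch never uses it, which is a warning sign. One way to close the gap: from $Bx_0=x_1B$ and (\ref{x0resolvent}) one gets $y_\tau x_1=x_0y_\tau+\beta e^\tau\, y_\tau x_1x_0y_\tau$, and since $\|\beta e^\tau x_0y_\tau\|\leq|\alpha|^{-1}<1$ this can be iterated to write $y_\tau x_1=x_0C_\tau$ with $C_\tau$ a norm-convergent series (the same Neumann-series device the paper uses in the proof of Proposition \ref{dUAB}), which yields $y_\tau\cD(A)=y_\tau x_1\cH\subseteq x_0\cH=\cD(A)$; the commutator identity on $\cD(A)$ then still has to be extracted from the shifted resolvent relation. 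As written, your argument is a plausible plan whose decisive step is missing.

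For comparison, the paper's proof is organized precisely so as to avoid unbounded commutators and any differentiation in $\tau$: it proves the power identity (\ref{x01}) by induction, extends it to the resolvent identity (\ref{resident}) for all $\lambda$ in the lower half-plane by an openness/connectedness argument (this is exactly where $\alpha<-1$ is used), converts it into a semigroup relation via the approximation formula (\ref{gen}), and then repeats the same openness-plus-approximation scheme in the $B$-parameter to reach the Weyl-type relation (\ref{expAB}); the final construction of $U$ coincides with yours. Your Heisenberg-picture ODE route is viable and arguably more conceptual, but to count as a proof it must contain the domain-invariance and commutator lemmas indicated above.
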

\begin{proof}
The basic pattern of the proof is similar to that of  Kato's theorem \cite{K2}, but the technical details are more complicated. The self-adjointness of $A$ and $B$ follows from Lemma \ref{selfadjoint}.

First we prove by induction on $n \in \dN$ that
\begin{align}\label{x01}
x_0^ny= yx_1^n+\beta i ~ y(x_1^n-x_0^n)y.
\end{align}
If $n{=}1$, then (\ref{x01}) holds by combining (\ref{xy}) and the first equality of (\ref{x0resolvent}). Suppose that (\ref{x01}) is valid for $n \in \dN$.  Note that $x_0x_1=x_1x_0$ by (\ref{x0resolvent}). Using first the induction hypothesis, then equation (\ref{x01}) in the case $n{=}1$ and finally once more the induction hypothesis, we compute
\begin{align*}
x_0^{n+1}y&= x_0(yx_1^n+\beta i ~ y(x_1^n-x_0^n)y)=
(yx_1+\beta i ~ y(x_1-x_0)y)(x_1^n+\beta i ~ (x_1^n-x_0^n)y)\\ &=yx_1^{n+1}+ \beta i~ y(x_1{-}x_0)yx_1^n + \beta i~ yx_1(x_1^n-x_0^n)y -\beta^2y(x_1{-}x_0)(x_1^n-x_0^n)y\\
&=
yx_1^{n+1}+ \beta i ~ y(x_1{-}x_0)(x_0^ny-\beta i ~ y(x_1^n-x_0^n)y) + \beta i ~ yx_1(x_1^n-x_0^n)y -\beta^2y(x_1{-}x_0)(x_1^n-x_0^n)y\\
&=yx_1^{n+1} +\beta i ~ y((x_1{-}x_0)x_0^n  +x_1(x_1^n-x_0^n))y
=yx_1^{n+1} +\beta i ~ y(x_1^{n+1}-x_0^{n+1})y,
\end{align*}
which completes the induction proof of equation (\ref{x01}).

Let $\cF_1$ denote the set of all complex $\lambda$ for which $\lambda$ and $\lambda +i$ are not real and
 the identity
\begin{align}\label{resident}
(A+i- \lambda )^{-n}y= y (A-\lambda )^{-n} + \beta i~ y((A-\lambda )^{-n} -(A+i -\lambda )^{-n})y
\end{align}
holds for all $n \in \dN$.
Suppose that $\lambda_0 \in \cF_1$.
Fix $k\in \dN$. Let $\lambda$ be a complex number such that $|\lambda-\lambda_0|(||(A-\lambda_0)^{-1}||+||(A+i-\lambda_0)^{-1}||)<1$. We multiply equation (\ref{resident}) by
$ {{n-1} \choose {k-1}}  (\lambda-\lambda_0)^{n-k}$ and sum over $n=k,k{+}1,\dots$.
Using the identities
\begin{align*}
(A-\lambda )^{-k} = \sum_{n=k}^\infty ~
 {{n-1} \choose {k-1}}
(\lambda -\lambda_0)^{n-k} (A-\lambda_0)^{-n},
\end{align*}
\begin{align*}
(A+i-\lambda )^{-k} = \sum_{n=k}^\infty ~
 {{n-1} \choose {k-1}}
(\lambda -\lambda_0)^{n-k} (A+i-\lambda_0)^{-n}
\end{align*}
we conclude that (\ref{resident}) is satisfied for $\lambda$ and $k$. Therefore, $\lambda \in \cF_1$ which proves that $\cF_1$
is open.
Recall that $(A-\alpha i)^{-1}=x_0$ by (\ref{AB}). Combining this fact with (\ref{x0resolvent}) we derive
$$
(A-\alpha i- i)x_1= (A-\alpha i)x_0(I+ix_1) - i x_1= I
$$
and similarly $x_1(A-\alpha i -i)=I$, so that $(A-\alpha i- i)^{-1}=x_1$. Inserting these formulas for $x_0$ and $x_1$ into (\ref{x01}) we obtain equation (\ref{resident}) for $\lambda=i+\alpha i$. That is, $i+\alpha i \in \cF_1$. Because $\cF_1$ is open as just shown,
the connected component of $i+\alpha i$ in the complement of $\dR \cup (\dR +i)$
is contained in $\cF_1$. Since $\alpha <-1$ by assumption, (\ref{resident}) holds for all $\lambda$ of the lower half-plane.

Multiplying (\ref{resident}) by $(-\lambda )^n$ and setting $\lambda = - n \gamma^{-1}i$ with $\gamma > 0$ and $n\in \dN$, we obtain
\begin{align}\label{an}
(I{-}\gamma n^{-1} i(A+i))^{-n}y  = y(I{-}\gamma n^{-1} iA)^{-n} +\beta i~ y((I{-}\gamma n^{-1} i A)^{-n} {-}(I{-}\gamma n^{-1} i(A+i))^{-n})y
\end{align}

We now need the following fact (see e.g. \cite{HPh}, p. 362 or \cite{K1}, p. 479):
If $C$ is the infinitesimal generator of a contraction semigroup $\{e^{\gamma C};\gamma\geq 0\}$, then we have
\begin{align}\label{gen}
e^{\gamma  C}= s{-}{\rm lim_{n \to \infty}} (I-\gamma n^{-1}C)^{-n}.
\end{align}
Applying this formula to the generators $iA$ and $i(A+i)$ of contraction semigroups, it follows from (\ref{an}) that
\begin{align*}
e^{\gamma i(A+i)}y =y e^{\gamma i A} +\beta i~ y(e^{\gamma i A}- e^{\gamma i(A+i)})y
\end{align*}
for all $\gamma >0$. Because $(B-\beta i)^{-1}=y$ by (\ref{AB}), the latter yields
\begin{align*}
e^{i\gamma A} y= ye ^{i\gamma  A}(e^{\gamma}(I+\beta i ~y)-\beta i~y)
=y e^{i\gamma  A}(e^{\gamma}B-\beta i)y.
\end{align*}
Hence we have
\begin{align*}
e^{i\gamma A}(e^{\gamma}B-\beta i)^{-1}= y e^{i\gamma A}=(B -\beta i)^{-1} e^{i\gamma A}
\end{align*}
which in turn implies that
\begin{align}\label{betan}
e^{i\gamma A}(e^{\gamma}B-\beta i)^{-n}=(B -\beta i)^{-n} e^{i\gamma A}
\end{align}
for all $n \in \dN$ and $\gamma >0$.

Now we fix $\gamma >0$ and consider the set $\cF_2$ of all $\lambda \in \dC\backslash \dR$ for which
\begin{align}\label{mu}
e^{i\gamma A}(e^{\gamma}B-\mu )^{-n}=(B -\mu )^{-n} e^{i\gamma A}
\end{align}
is satisfied for all $n \in \dN$. Arguing as in the paragraph before last, with $A$ and $A+i$ replaced by $B$ and $e^\gamma B$, we conclude that $\cF_2$ is open. Since $\beta i \in \cF_2$ by (\ref{betan}), $\cF_2$ contains the lower half-plane when $\beta <0$ resp. the upper half-plane when $\beta >0$. Let us first assume that $\beta <0$. Then (\ref{mu}) is valid for all $\mu$ such that ${\rm Im}~\mu <0$.

Proceeding  as above, we multiply equation (\ref{mu}) by $(-\mu )^n $ and set $\mu =-n \delta^{-1}i$ with $\delta >0$ and $n \in \dN$. Letting $n \to \infty$ by using formula (\ref{gen}) we obtain
\begin{align}\label{expAB}
e^{i\gamma A} e^{i\delta e^\gamma B} =e^{i\delta B} e^{i\gamma A}.
\end{align}
Up to now equation (\ref{expAB}) has been proved only for $\gamma >0$ and  $\delta >0$.
We now show that (\ref{expAB}) holds for arbitary real numbers $\gamma$ and $\delta$. First we note that (\ref{expAB}) is trivially fulfilled if $\gamma=0$ or $\delta =0$. Applying the involution to (\ref{expAB}) and multiplying the corresponding equation by $e^{i\gamma  A}$ from the left and from the right we get
$e^{i\gamma A} e^{-i\delta e^\gamma B} =e^{-i\delta B} e^{i\gamma A}$.
This shows that (\ref{expAB}) is valid for all $\gamma \geq 0$ and  $\delta \in \dR$. Applying the involution to (\ref{expAB}), with $\delta$ replaced by real $\eta$, and multiplying then by $e^{i\eta e^{\gamma} B}$ from the left and by $e^{i \eta B}$ from the right we derive $e^{-i\gamma A} e^{i\eta B} =e^{i\eta e^{\gamma}B} e^{-i\gamma A}$. Setting $\delta=\eta e^{\gamma}$ the latter yields $e^{-i\gamma A} e^{i\delta e^{-\gamma} B} =e^{i\delta B} e^{-i\gamma A}$ which means that (\ref{expAB}) holds for $\gamma \leq0$ and $\delta \in \dR$. Thus, equation (\ref{expAB}) is satisfied for all reals $\gamma, \delta $.

 The case when $\beta >0$ is treated a in similar manner replacing $\delta >0$ by $\delta <0$ in the preceding.

For $(e^\gamma,\delta)\equiv {\rm exp}~\delta {\sf b}~ {\rm exp}~ \gamma{\sf a} \in G$ we define $U((e^\gamma,\delta))=e^{-i\delta B}e^{-i\gamma A}$. A straightforward computation based on equation (\ref{expAB}) shows that $U$ is a homomorphism of $G$ into the unitary group of $\cH$. Hence $U$ is a  unitary representation of $G$ on $\cH$.
Clearly, $i\partial U({\sf a})=A$ and $i\partial U({\sf b})=B$.
\end{proof}
As a byproduct of the preceding considerations the next theorem gives an integrability criterion for Hilbert space representations of the Lie algebra $\cg$. Here the density condition (\ref{densitycond}) is the crucial assumption for the integrability of the representation. Note that it is not sufficent that $A$ and $B$ are selfadjoint operators satisfying  relation (\ref{relAB}) on a common core.
\begin{thm}\label{intdens}
$(i)$ Suppose that $U$ is a unitary representation of $G$. Let $\alpha, \beta$ be fixed real numbers such that $|\alpha| >1$ and $\beta \neq 0$. Let
$A=i\partial U(\sf a)$, $B=i \partial U(\sf b)$ and $\cD=(A-\alpha i)^{-1}(B-\beta i)^{-1}\cH(U)$. Then $\cD$ is dense in $\cH(U)=(B-\beta i)(A-\alpha i)\cD$ and we have
\begin{align}\label{relAB}
AB\varphi- BA\varphi =iB\varphi~~{\rm for}~~\varphi \in \cD.
\end{align}
$(ii)$ Suppose that $A$ and $B$ are self-adjoint operators on a Hilbert space $\cH$. Let $\alpha, \beta \in \dR$, $\alpha <-1$ and $\beta \neq 0$.
Assume that there is a linear subspace $\cD\subseteq \cD(AB)\cap \cD(BA)$ of $\cH$ such that (\ref{relAB}) holds and that
\begin{align}\label{densitycond}
(B-\beta i)(A-\alpha i)\cD ~~ or ~~ (A-(\alpha{+}1) i)(B-\beta i)\cD~~  is~ dense~ in ~~ \cH.
\end{align}
Then
there exists a unitary representation $U$ of $G$ such that $A=i\partial U(\sf a)$ and $B=i \partial U(\sf b)$.
\end{thm}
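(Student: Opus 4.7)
The plan is to handle part~(i) by direct computation from the resolvent identities already proved in Proposition~\ref{dUAB}, and to reduce part~(ii) to Theorem~\ref{axbresol} by building the bounded operators $x_0=(A-\alpha i)^{-1}$, $x_1=(A-(\alpha+1)i)^{-1}$, $y=(B-\beta i)^{-1}$ out of the given self-adjoint $A$ and $B$, verifying equations (\ref{x0x1y})--(\ref{xy}), and applying the theorem.

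For part~(i), Proposition~\ref{dUAB} supplies (\ref{x0x1y})--(\ref{xy}) for these resolvents. To see that $\cD=x_0 y\cH(U)$ is dense I would note that $(A-\alpha i)^{-1}$ is a continuous bijection of $\cH(U)$ onto $\cD(A)$ equipped with the graph norm of $A$, so it carries the dense subspace $y\cH(U)=\cD(B)$ onto a subset dense in $\cD(A)$ in graph norm and hence in the Hilbert-space norm; density of $\cD(A)$ then gives density of $\cD$. The equality $(B-\beta i)(A-\alpha i)\cD=\cH(U)$ is immediate from $(B-\beta i)(A-\alpha i)x_0 y\eta=\eta$, which simultaneously shows $\cD\subseteq\cD(BA)$; rewriting (\ref{xy}) as $x_0 y\eta=y(x_1\eta-\beta x_1 x_0 y\eta)$ places $\cD$ inside $\cD(B)$, and then $B\varphi=\beta i\varphi+x_1\eta-\beta x_1\varphi$ lies in $\cD(A)$, so $\cD\subseteq\cD(AB)$. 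A short calculation using $A\varphi=\alpha i\varphi+y\eta$ and $Ax_1=I+(\alpha+1)ix_1$ then produces $(AB-BA)\varphi=iB\varphi$.

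For part~(ii), set $x_0,x_1,y$ as above; they are bounded since $\alpha,\alpha+1,\beta$ are nonzero reals and injective since $A,B$ are self-adjoint. Equations (\ref{x0x1y}) and (\ref{x0resolvent}) follow from the standard self-adjoint functional calculus. The crux is verifying (\ref{xy}). Using $\cD\subseteq\cD(AB)\cap\cD(BA)$ and expanding both $(B-\beta i)(A-\alpha i)\varphi$ and $(A-(\alpha+1)i)(B-\beta i)\varphi$, the commutator (\ref{relAB}) makes the second-order terms cancel and produces the key algebraic identity
\begin{equation*}
(B-\beta i)(A-\alpha i)\varphi=(A-(\alpha+1)i)(B-\beta i)\varphi+\beta\varphi,\qquad\varphi\in\cD.
\end{equation*}
Assuming the first form of (\ref{densitycond}), set $\chi=(B-\beta i)(A-\alpha i)\varphi$. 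Applying $y$ and $x_0$ in turn to the left-hand side gives $x_0 y\chi=\varphi$, while applying $x_1$ and then $y$ to the right-hand side, and substituting $\varphi=x_0 y\chi$, gives $\varphi=yx_1\chi-\beta yx_1 x_0 y\chi$. Equating these two expressions yields $(x_0 y-yx_1+\beta yx_1 x_0 y)\chi=0$ on the dense set $(B-\beta i)(A-\alpha i)\cD$, and boundedness extends this to (\ref{xy}) on $\cH$. Under the second form of (\ref{densitycond}) the parallel argument starting with $\chi=(A-(\alpha+1)i)(B-\beta i)\varphi$ instead yields $x_0 y-yx_1=-\beta(x_0 y)(yx_1)$ on $\cH$; since $\|\beta x_0 y\|\le|\alpha|^{-1}<1$, the Neumann series argument of Proposition~\ref{dUAB} expresses $yx_1$ as a power series in $x_0 y$, forcing $(x_0 y)(yx_1)=(yx_1)(x_0 y)$ and making this derived identity coincide with (\ref{xy}). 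Theorem~\ref{axbresol} now produces the unitary representation $U$ with $i\partial U({\sf a})=A$ and $i\partial U({\sf b})=B$.

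The main obstacle is promoting the densely-defined commutator relation (\ref{relAB}) to the bounded operator identity (\ref{xy}) required by Theorem~\ref{axbresol}. The displayed algebraic identity is the bridge: it repackages $[A,B]=iB$ as a first-order perturbation relating the two opposite orderings of the resolvent factors, and the density hypothesis (\ref{densitycond}) is exactly what is needed to transfer this relation, verified on the core $\cD$, to the whole Hilbert space on which Theorem~\ref{axbresol} operates.
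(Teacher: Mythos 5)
Your argument is correct and follows essentially the same route as the paper: part (i) rests on Proposition \ref{dUAB} together with a direct verification of $\cD\subseteq\cD(AB)\cap\cD(BA)$ and of (\ref{relAB}), and part (ii) converts (\ref{relAB}) into the identity $(B-\beta i)(A-\alpha i)\varphi=(A-(\alpha+1)i)(B-\beta i)\varphi+\beta\varphi$, transfers it via the density hypothesis to the bounded relation (\ref{xy}), and invokes Theorem \ref{axbresol}. The only deviations are cosmetic: you prove density of $\cD$ via the graph-norm homeomorphism rather than $\ker(x_0y)^*=\{0\}$, and you spell out (correctly, with the Neumann-series commutation step) the second alternative in (\ref{densitycond}), which the paper dismisses as ``treated in a similar manner.''
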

\begin{proof} We retain the notations $x_0=(A-\alpha i)^{-1}$, $x_1=(A-(\alpha {+}1)i)^{-1}$ and $y=(B-\beta i)^{-1}$.

(i): Recall that
equation (\ref{xy}) is satisfied by Proposition \ref{dUAB} and (\ref{AB}) holds by definition. Obviously, ${\rm ker}~(x_0y)^* =\{0\}$, so $\cD=x_0y\cH$ is dense in $\cH(U)$.

From (\ref{xy}) and (\ref{AB}) it follows that $\cD\subseteq \cD(AB)\cap \cD(BA)$.
If $\psi \in \cH$, then $\varphi :=x_0y \psi =yx_1(I-\beta x_0y)\psi \in \cD$ by (\ref{xy}). To prove that equation (\ref{relAB}) is valid we compute
\begin{align*}
AB\varphi- BA\varphi-iB\varphi &= (A- (\alpha {+}1)i))(B-\beta i )\varphi - (B-\beta i)(A-\alpha i)\varphi +\beta \varphi  \\&=(A-(\alpha {+}1)i)(B-\beta i)yx_1(I-\beta x_0y)\psi -(B-\beta i )(A-\alpha i)x_0y\psi +\beta x_0y \psi  \\ &=
(I-\beta x_0y)\psi- \psi + \beta x_0y\psi=0.
\end{align*}
(ii): Assume that $\cD_1=(B-\beta i)(A-\alpha i)\cD$ is dense in $\cH$. The case when $(A-(\alpha{+}1) i)(B-\beta i)\cD$ is dense is treated in a similar manner.
Let $\varphi \in \cD_1$. Then $\varphi=(B-\beta i)(A-\alpha i)\psi$ for some $\psi \in \cD$. By (\ref{relAB}) we have $\varphi= (A-(\alpha{+}1)i)(B-\beta i)\psi + \beta \psi$, so that $x_0y\varphi=\psi$ and $yx_1\varphi= \psi +\beta yx_1\psi$ which in turn yields that $x_0y\varphi=yx_1\varphi -\beta yx_1x_0y\varphi=yx_1\varphi -\beta yx_0x_1y\varphi$. Since $\cD_1$ is dense in $\cH$, we have $x_0y=yx_1 -\beta yx_0x_1y$ on $\cH$, that is, (\ref{xy}) holds. Since equations (\ref{x0x1y}) and (\ref{x0resolvent}) follow at once from the definitions of $x_0$, $x_1$ and $y$, Theorem \ref{axbresol} applies and gives the assertion.
\end{proof}

\section{Application:
A Strict Positivstellensatz for the Enveloping Algebra of the
$ax+b$-Group}

In this section $\cA$ is the complex universal enveloping algebra $\cE(\cg)$ of the Lie algebra $\mathfrak{g}$ of the affine group of the real line. Setting $a:=i {\sf a}$ and $b:=i {\sf b}$,  $\cA$ becomes the unital $*$-algebra with two hermitian generators $a$ and $b$ and defining relation
\begin{align}\label{ab}
ab-ba =ib.
\end{align}

Let us fix two  reals $\alpha$ and $\beta$ such that $\alpha< -1$, $\beta \neq 0$ and $\alpha$ is not an integer and set
$$\cS_g=\{s=b-\beta i,~s_n=a-(\alpha {+}n) i;~n \in \dZ\},~\cS_G=\cS_g \cup \cS_g^*,~
\cX_G=\cS_G^{-1},~ \cA_G=\{ a,~b \}.
$$
Using (\ref{ab}) we obtain $s_{n+1}b=bs_n$, $s_{n-1}^*b=bs_n^*$ for $n\in \dZ$,  $s^2a=(s(a-i)+\beta)s$ and $(s^*)^2a= (s^*(a-i)-\beta)s^*$. From these formulas it follows  that the unital monoid $\cS$ generated by the set $\cS_G$ is a $*$-invariant left Ore set, so we can assume that $\cS=\cS_O$.

The $*$-subalgebra $\cX$ of $\cA\cS_O^{-1}$ is the unital algebra generated by the elements $y{:=}s^{-1}$ and $x_n{:=}s_n^{-1}$, where $n \in \dZ$, and their adjoints.
In the $*$-algebra $\cX$ we have the following relations:
\begin{align}\label{xo1}
x_n{-}x_n^* & =2(\alpha{+}n) i~ x_n^*x_n=2(\alpha{+}n) i~ x_nx_n^*,~ y{-}y^*= 2 \beta i~ y^*y= 2 \beta i yy^*,\\ \label{xo2}
x_n{-}x_k&=(n{-}k)i x_nx_k=(n{-}k)i x_kx_k,~ x_n{-}x_k^*=(2 \alpha {+} k{+}n)i~x_nx_k^*=(2 \alpha {+} k{+}n)i~x_k^*x_n,
\\  \label{xo3}
x_ny{-}yx_{n+1}&={-}\beta yx_{n+1}x_ny =- \beta x_n y^2x_{n+1},~x_ny^*{-}y^*x_{n+1}=\beta y^*x_{n+1}x_ny^* = \beta x_n (y^*)^2x_{n+1}.
\end{align}
\begin{lemma}\label{cond2}
Conditions $(O)$, $(IA)$ and $(AB)$  are satisfied.
\end{lemma}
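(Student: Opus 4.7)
\medskip
\noindent\textbf{Proof plan.} The plan is to mirror the argument for the Weyl algebra in Lemma~\ref{cond1}: first establish $(AB)$ via the identity (\ref{abounded}) adapted to the new generators, next verify $(IA)$ by direct manipulation of the defining relations (\ref{xo1})--(\ref{xo3}), and finally deduce $(O)$ from Lemma~\ref{A12}(ii).

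For $(AB)$ I will expand, for each $k\in\dZ$,
\[
(1 + i(\alpha{+}k)x_k)^*(1 + i(\alpha{+}k)x_k) = 1 - (\alpha{+}k)^2 x_k^* x_k,\qquad (1+i\beta y)^*(1+i\beta y)= 1 - \beta^2 y^*y,
\]
both of which follow immediately from the first line of (\ref{xo1}). Both right-hand sides lie in $\sum\cX^2$, and since $\alpha\notin\dZ$ ensures $\alpha+k\neq 0$ for every $k\in\dZ$ while $\beta\neq 0$ by assumption, each generator $x_k$, $y$ (and therefore $x_k^*$, $y^*$) lies in $\cX_b$. As $\cX_b$ is a $*$-algebra containing the generating set $\cX_G$, this gives $\cX=\cX_b$.

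For $(IA)$ I will verify $s x s^{-1}\in\cX$ for every $s\in\cS_G$, $x\in\cX_G$, splitting into cases. The trivial cases are those in which $s$ and the corresponding denominator $x^{-1}$ both come from the commutative subalgebra $\dC[a]$ (pairs taken from $\{s_n,s_n^*\}$ and $\{x_m,x_m^*\}$) or both from $\dC[b]$ (pairs from $\{s,s^*\}$ and $\{y,y^*\}$); since $a^*=a$ and $b^*=b$, in these cases $s$ and $x$ already commute in $\cA\cS_O^{-1}$, so $sxs^{-1}=x\in\cX$. For the mixed cases, (\ref{xo3}) is the engine. As a prototype, I will multiply $x_n y - yx_{n+1}=-\beta y x_{n+1} x_n y$ by $s$ on the left (using $sy=1$) to obtain $s x_n y = x_{n+1}-\beta x_{n+1}x_n y$, and then by $s$ on the right (using $ys=1$) to reach
\[
s x_n s^{-1} = x_{n+1} - \beta x_{n+1} x_n y \in \cX.
\]
The same technique, combined with the identity $s_{k+1}x_k=1-ix_k$ coming from (\ref{xo2}), will produce $\alpha_{s_k}(y)=s_k y s_k^{-1}\in\cX$ (explicitly $(y+\beta y^2 x_{k+1})(1-ix_k)$). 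The remaining mixed cases $\alpha_{s^*}(x_n^{\pm*})$, $\alpha_{s_k^*}(y^{\pm*})$, and so on will follow either by the same manipulation or by taking adjoints (legitimate because $\cS_G$ and $\cX_G$ are $*$-invariant). Once $(IA)$ is in hand, Lemma~\ref{A12}(ii) immediately yields $(O)$.

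The main obstacle will be the bookkeeping in $(IA)$: although each mixed pair reduces to a one-line manipulation of (\ref{xo1})--(\ref{xo3}), one must verify every combination of $s\in\cS_G$ with $x\in\cX_G$, paying attention to the index shift $n\leftrightarrow n{+}1$ that arises whenever $s$ or $s^*$ is commuted past an $a$-resolvent. The derivation of $(AB)$ and the appeal to Lemma~\ref{A12}(ii) for $(O)$ will be essentially formal.
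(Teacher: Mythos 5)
Your proposal is correct and follows essentially the same route as the paper: $(AB)$ via the analogue of (\ref{abounded}) for each $x_k$ and $y$, $(IA)$ by one-line manipulations of the relations (\ref{xo1})--(\ref{xo3}) (the paper records exactly such identities, e.g.\ $x_ny=(1-\beta x_ny)y(1+ix_{n+1})x_n$), and $(O)$ from Lemma \ref{A12}(ii). One small caution: the adjoint of $sxs^{-1}\in\cX$ is $(s^*)^{-1}x^*s^*\in\cX$, i.e.\ conjugation by the \emph{inverse} of a generator, so the starred cases are really obtained by rerunning your manipulation on the adjointed relations (with the index shift $n\to n-1$ for $x_n^*$), which is precisely what the paper's list of four identities ``and their adjoints'' accomplishes.
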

\begin{proof}
The proof is similar to that of Lemma \ref{cond1}. As a sample, we  verify  $(IA)$. Combining relations (\ref{xo2}) and (\ref{xo3}) we obtain $x_nx_k=x_kx_n$, $x_nx_k^*=x_k^*x_n$,
\begin{align*}
x_ny&=yx_{n+1}(1-\beta x_n y)=(1-\beta x_n y)y(1+i x_{n+1})x_n,\\ x_ny^*&=y^*x_{n+1}(1+\beta x_n y^*)=(1+\beta x_n y^*)y^*(1+i x_{n+1})x_n,\\
x_n^*y&=yx_{n-1}^*(1-\beta x_n^*y)=(1-\beta x_n^*y)y(1+ ix_{n-1}^*)x_n^*,\\
x_n^*y^*&=y^*x_{n-1}^*(1+\beta x_n^*y^*) =(1+\beta x_n^*y^*)y^*(1+ix_{n-1}^*)x_n^*
\end{align*}
for $n,k \in \dZ$. From these equations and their adjoints we conclude that $(IA)$ is fulfilled.
\end{proof}
\begin{prop}\label{cond3}
For any $\cS^{-1}$-torsionfree $*$-representation $\rho$ of the $*$-algebra $\cX$ there exists a unique unitary representation $U$ of the group $G$ such that $\pi_\rho =dU$. The representation $\rho$ is irreducible if and only if $U$ is irreducible.
\end{prop}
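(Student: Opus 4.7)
The plan is to invoke Theorem \ref{axbresol} on the triple of bounded operators $(\rho(x_0),\rho(x_1),\rho(y))$ acting on $\cH(\rho)$, and then to identify the resulting unitary representation of $G$ with the one whose derived representation coincides with $\pi_\rho$. First I would check the hypotheses of Theorem \ref{axbresol}: equations (\ref{x0x1y})--(\ref{xy}) are precisely the relations (\ref{xo1})--(\ref{xo3}) specialised at $n=0$ (and $k=1$ in the second part of (\ref{xo2})), so they are automatic for any $*$-representation of $\cX$; the injectivity conditions $\ker\rho(x_0)=\ker\rho(y)=\{0\}$ follow from $\rho$ being $\cS^{-1}$-torsionfree; and the numerical conditions $\alpha<-1$, $\beta\neq 0$ are already in force. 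Theorem \ref{axbresol} then furnishes self-adjoint operators $A=\rho(x_0)^{-1}+\alpha iI$ and $B=\rho(y)^{-1}+\beta iI$ on $\cH(\rho)$, together with a unitary representation $U$ of $G$ satisfying $i\partial U({\sf a})=A$ and $i\partial U({\sf b})=B$.

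My next step is to show $\pi_\rho=dU$. By Theorem \ref{assrep}, $\pi_\rho(s_0)\varphi=\rho(x_0)^{-1}\varphi$ for $\varphi\in\cD_\rho$, so $\pi_\rho(a)\varphi=A\varphi$; similarly $\pi_\rho(b)\varphi=B\varphi$. Since $a,b$ generate $\cA$ and $dU(a)$, $dU(b)$ are the restrictions of $A$, $B$ to $\cD^\infty(U)$, the two $*$-representations of $\cA$ agree on any common domain, and it suffices to verify the equality of domains $\cD_\rho=\cD^\infty(U)$. For this I would observe that any $t\in\cS$ is a product $r_1\cdots r_k$ with $r_j\in\cS_G$, so $\rho(t^{-1})$ is a composition of bounded resolvents of $A$ and $B$ of the form $(A\mp(\alpha+n)iI)^{-1}$ or $(B\mp\beta iI)^{-1}$; since shifts by purely imaginary constants do not alter domains, $\Ran\,\rho(t^{-1})$ coincides with the common domain of the corresponding word in $A$ and $B$ taken in reverse order. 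Intersecting over $t\in\cS$, the domain $\cD_\rho$ becomes the intersection of domains of all non-commutative monomials in $A$ and $B$, which by the standard characterisation of $C^\infty$-vectors for a unitary Lie group representation is precisely $\cD^\infty(U)$.

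Uniqueness of $U$ is immediate because $A$ and $B$ determine the infinitesimal generators along the basis $\{\sf a,\sf b\}$ of $\cg$ and $G$ is connected, so any $U'$ with $dU'=\pi_\rho$ must satisfy $i\partial U'({\sf a})=A$, $i\partial U'({\sf b})=B$ and hence coincide with $U$ via exponentiation. For the irreducibility assertion I would combine two equivalences: Theorem \ref{assrep} yields that $\rho$ is irreducible iff $\pi_\rho$ is irreducible; and the strong commutant of $dU(\cA)$ coincides with the commutant of the one-parameter groups $\{e^{i\gamma A},e^{i\delta B}:\gamma,\delta\in\dR\}$, which in turn equals the commutant of $U(G)$, so $dU$ is irreducible iff $U$ is.

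The main obstacle I anticipate is the careful verification $\cD_\rho=\cD^\infty(U)$: translating the algebraic structure of $\cS$ into operator-theoretic domains for arbitrarily long non-commutative products of unbounded operators requires some book-keeping, so that every monomial in $A$ and $B$ is genuinely witnessed, up to order and imaginary shifts, by the range of some $\rho(t^{-1})$ with $t\in\cS$, and conversely every such range corresponds to a legitimate domain of a product of $A$'s and $B$'s.
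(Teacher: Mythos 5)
Your proposal is correct and follows essentially the same route as the paper: apply Theorem \ref{axbresol} to $(\rho(x_0),\rho(x_1),\rho(y))$ (the relations (\ref{x0x1y})--(\ref{xy}) being contained in (\ref{xo1})--(\ref{xo3}) and injectivity coming from torsionfreeness), then identify $\cD(\pi_\rho)$ — the intersection of ranges of products of the resolvents $\rho(s_n^{-1})$, $\rho(s^{-1})$ and their adjoints — with the intersection of domains of monomials in $A,B$, i.e.\ with $\cD^\infty(U)$ via the standard $C^\infty$-vector characterization, and settle irreducibility through Theorem \ref{assrep} together with the equivalence of irreducibility for $dU$ and $U$. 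The only differences are cosmetic: you spell out the uniqueness of $U$ and the commutant argument, which the paper handles by citation.
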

\begin{proof}
Since the relations (\ref{x0x1y})--(\ref{xy}) are contained in (\ref{xo1})--(\ref{xo3}), Theorem \ref{axbresol} applies. Hence there exists a unitary representation $U$ of $G$ such that $i\partial U({\sf a})=A$ and $i\partial U({\sf b})=B$. As in the proof of Proposition \ref{schrod2} it follows that $\pi_\rho(a)\varphi= A\varphi$ and $\pi_\rho(b)\varphi=B\varphi$ for $\varphi \in \cD(\pi_\rho)$ and that $\cD(\pi_\rho)$ is the intersection of ranges of all finite products of operators $(A -i(\alpha{+}n))^{-1}=\rho(x_n) =\rho(s_n^{-1})$, $(B-\beta i )^{-1}=\rho(y)=\rho(s_2^{-1})$ and their adjoints. The latter set is obviously the intersection of domains of all finite products of $A$ and $B$. Hence  $\cD(\pi_\rho)$ is equal to the domain $\cD^\infty (U)$
 (see e.g. \cite{S1}, Theorem 10.1.9) of $dU$. Since $dU(a)\psi =i\partial U({\sf a})\psi = A\psi$ and $dU(b)\psi =i\partial U({\sf b})\psi=B\psi$ for $\psi \in \cD^\infty(U)$, we conclude that $\pi_\rho= dU$.

As stated in Theorem \ref{assrep}, $\rho$ is irreducible if and only if $\pi_\rho$ is so. But $dU=\pi_\rho$ is known to be irreducible if and only if the unitary representation $U$ is irreducible (\cite{S1}, 10.2.18).
\end{proof}

From Proposition \ref{dUAB} it follows easily the converse of Proposition \ref{cond3} is also true (that is, any $*$-representation $dU$ of $\cA$ is equal to $\pi_\rho$ for some torsionfree $*$-representation $\rho$ of $\cX$), but we will need this result in what follows.

Because $\{a^nb^k; k,n {\in} \dN_0\}$ and $\{b^na^k; k,n {\in} \dN_0\}$ are bases of the vector space $\cA$ by the Poincare-Birkhoff-Witt theorem, each nonzero element $c \in \cA$ can be written as
\begin{align}\label{crep2}
c = \sum_{j=0}^{d_1} \sum_{l=0}^{d_2}   \gamma_{jl}a^jb^l= \sum_{n=0}^{d_2} f_n(a)b^n =\sum_{k=0}^{d_1} g_k(b)a^n.
\end{align}
Here $\gamma_{jl}\in \dC$ and $f_n(a)$ and $g_k(b)$ are complex polynomials uniquely determined by $c$.  We define $d(c)=(d_1,d_2)$ if there are numbers $j_0, l_0\in \dN_0$ such that $\gamma_{d_1,l_0}\neq 0$ and $\gamma_{j_0,d_2}\neq 0$. Then $d$ is a multi-degree map on the $*$-algebra $\cA$. It is easily checked that conditions $(A3)$--$(A5)$ are valid.

\begin{thm}\label{abpos}
Supoose that $c=c^*$ is a nonzero element of the enveloping algebra $\cA=\cE(\cg)$ with multi-degree $d(c)=(2m_1,2m_2)$, where $m_1,m_2\in \dN_0$, satisfying the following assumptions:\\
(I) For each irreducible unitary representation $U$ of $G$ there exists a bounded self-adjoint operator $T_U > 0$ on $\cH(U)$ such that $dU(c) \geq T_U$.\\
(II) $\gamma_{2m_1,2m_2}\neq 0$ and the polynomials $f_{2m_2}(\cdot+m_2i)$ and $g_{2m_1}$ are positive on the real line.\\
Then there exists an element $s \in \cS$ such that $s^*cs \in \sum~\cA^2$.
\end{thm}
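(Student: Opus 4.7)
The plan is to invoke Theorem~\ref{abstpos2}, whose hypotheses $(IA)$ and $(AB)$ (hence $(O)$) are secured by Lemma~\ref{cond2}. I would set $t := s^{m_2}s_0^{m_1} \in \cS$, so that $d(t) = (m_1,m_2)$ and $d(tt^*) = d(c) = (2m_1,2m_2)$; Lemma~\ref{a4}(ii) then gives $z := t^{-1}c(t^*)^{-1} \in \cX$, equal to $x_0^{m_1}y^{m_2}c(y^*)^{m_2}(x_0^*)^{m_1}$ in $\cA\cS_O^{-1}$. Hypothesis (i) of Theorem~\ref{abstpos2} is then immediate: every irreducible $\cS^{-1}$-torsionfree $*$-representation $\rho$ of $\cX$ satisfies $\pi_\rho = dU$ for some irreducible unitary representation $U$ of $G$ by Proposition~\ref{cond3}, whence (I) yields $\pi_\rho(c) = dU(c) \geq T_U$.

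The verification of hypothesis (ii) appears at first to require infinitely many conditions, one per $s_n \in \cS_G$, but collapses to just two. Indeed, from (\ref{xo2}) one has $x_k - x_n = (k-n)i\,x_k x_n \in \cX x_n \subseteq \cJ_{s_n}$, so every $x_k$ and $x_k^*$ lies in $\cJ_{s_n}$; hence $\cJ_{s_n} = \cJ_{s_0}$ for every $n \in \dZ$. Combined with the trivial identities $\cJ_{r^*} = \cJ_r$, it suffices to show $\rho_{s_0}(z) > 0$ and $\rho_s(z) > 0$ on every irreducible $*$-representation of $\cX_{s_0}$ and $\cX_s$. Each of these quotients is generated by a single normal element ($y$ satisfying $y - y^* = 2\beta iy^*y$ for $\cX_{s_0}$; $x_0$ satisfying $x_0 - x_0^* = 2\alpha ix_0^*x_0$ for $\cX_s$), so its irreducible representations are one-dimensional, with the generator mapped to a point of the corresponding circle (including the origin).

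For $\rho_{s_0}$, Proposition~\ref{rhonull3} applied with $r = s_0$ (a factor of $tt^*$) eliminates the contribution of all summands of $c$ whose $a$-degree is strictly below $2m_1$, reducing to the top term $g_{2m_1}(b)a^{2m_1}$. The iterated conjugation identity $y^{m_2}a^{2m_1}y^{-m_2} = (a + m_2 i - m_2\beta y)^{2m_1}$, which follows from $yay^{-1} = a + i - \beta y$, together with $[y,b] = 0$, then exhibits $z$ modulo $\cJ_{s_0}$ as a polynomial purely in $y, y^*$, since commutators between $x_0^{m_1}$ or $(x_0^*)^{m_1}$ and polynomials in $y, b$ lie in $\cJ_{s_0}$. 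Spectral evaluation at $y = \eta = (q - \beta i)^{-1}$, $q \in \dR$, yields $\rho_{s_0}(z) = g_{2m_1}(q)/(q^2 + \beta^2)^{m_2}$, with value $\gamma_{2m_1, 2m_2}$ at $\eta = 0$; both are positive by (II). The case $\rho_s$ proceeds analogously but exposes the shift $+m_2 i$ of (II): Proposition~\ref{rhonull3} with $r = s$ isolates $f_{2m_2}(a)b^{2m_2}$, and the identity $y^{m_2}f_{2m_2}(a)b^{2m_2}(y^*)^{m_2} = (1+\beta iy)^{m_2}f_{2m_2}(a+m_2 i)(1-\beta iy^*)^{m_2}$, obtained by combining $ab^{m_2} = b^{m_2}(a+m_2 i)$ with $y^{m_2}b^{m_2} = (1+\beta iy)^{m_2}$ and $b^{m_2}(y^*)^{m_2} = (1-\beta iy^*)^{m_2}$, produces precisely the shifted polynomial. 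Absorbing $(a+m_2 i)^{2m_1}$ into $(x_0 x_0^*)^{m_1}$ via the commutativity of $a$ with $x_0, x_0^*$ gives $\rho_s(z) = f_{2m_2}(q + m_2 i)/(q^2 + \alpha^2)^{m_1}$ at $x_0 = \xi = (q - \alpha i)^{-1}$, and $\gamma_{2m_1, 2m_2}$ at $\xi = 0$, both positive by (II).

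The principal obstacle is the non-commutative bookkeeping in the third paragraph: one must verify that the commutator remainders produced when $x_0^{m_1}$ and $(x_0^*)^{m_1}$ are moved past the polynomial core all land in the appropriate ideal, so that the reductions to single-variable polynomial evaluations in the one-dimensional quotient representations are legitimate and indeed yield the claimed rational expressions in $q$.
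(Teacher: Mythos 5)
Your proposal follows essentially the same route as the paper's proof: Theorem \ref{abstpos2} with $t=s^{m_2}s_0^{m_1}$, Lemma \ref{cond2} for $(IA)$ and $(AB)$, Proposition \ref{cond3} for hypothesis (i), the observation that all ideals $\cJ_{s_n}$ coincide so that only $\rho_{s_0}$ and $\rho_s$ must be checked, elimination of the lower-order terms of $c$ by Proposition \ref{rhonull3}, and spectral evaluation of the surviving normal generator; your final expressions $g_{2m_1}(q)(q^2+\beta^2)^{-m_2}$ and $f_{2m_2}(q+m_2 i)(q^2+\alpha^2)^{-m_1}$ coincide with the paper's reductions $\rho_{s_0}(z)=\rho_{s_0}(y^{m_2}g_{2m_1}(b)(y^*)^{m_2})$ and $\rho_{s}(z)=\rho_{s}(x_0^{m_1}f_{2m_2}(a+m_2 i)(x_0^*)^{m_1})$.

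One caution about the bookkeeping you flag yourself: the intermediate claim that commutators of $x_0^{m_1}$ or $(x_0^*)^{m_1}$ with polynomials in $y,b$ lie in $\cJ_{s_0}$ is false as literally stated, because such commutators need not lie in $\cX$ at all; for instance $[x_0,b]=-i\,x_0bx_0$ has multi-degree $(-2,1)$, whereas every element of $\cX$ has multi-degree $\leq (0,0)$. The paper avoids this by never moving $x_0$ past naked powers of $b$: it keeps the $y^{m_2}\cdots(y^*)^{m_2}$ sandwich in place, rewrites $g_{2m_1}(b)a^{m_1}=\sum_l\gamma_l(a-li)^{m_1}b^l$, absorbs the $a$-powers via $x_0a=1+\alpha i x_0$ and $ax_0^*=1-\alpha i x_0^*$, and kills the remainders with Proposition \ref{rhonull3}. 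Your conjugation identity $y^{m_2}a^{2m_1}y^{-m_2}=(a+m_2 i-m_2\beta y)^{2m_1}$ can be made to work, but the manipulations must be ordered so that every term one argues about modulo $\cJ_{s_0}$ (respectively $\cJ_s$) is genuinely an element of $\cX$.
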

\begin{proof} Since the proof follows a similar pattern as the proof of Theorem \ref{pqpos}, we sketch only the necessary modifications.
Setting $t:=s^{m_2}s_0^{m_1}$, the element $z:=t^{-1}c (t^*)^{-1}=x_0^{m_1}y^{m_2}c (y^*)^{m_1}(x_0^*)^{m_1}$ belongs to $\cX$ by Lemma \ref{a4}(ii).

The assertion follows from
Theorem \ref{abstpos2}. It remains to prove that assumptions  (i) and (ii) therein are satisfied. Assumption (i) is a consequence of assumption (I) combined with Proposition \ref{cond3}. To verify assumption (ii) we first note that all ideals $\cJ_{s_n}$
coincide by relation (\ref{xo2}), so it suffices to show that assumption (II) implies that $\rho_s(z)>0$ and $\rho_{s_0}(z)>0$.

Let us begin with $\rho_s(z)$. Note that that we have $f_{2m_2}(a)b^{m_2}=b^{m_2}f_{2m_2}(a+m_2 i)$ by the commutation relation (\ref{ab}). Further, we have $yb=1+\beta i y$ and $by^*=1-\beta i y^*$. Using these facts and arguing as in the proof of Theorem \ref{pqpos} it follows that
\begin{align}\label{f2m}
\rho_{s}(z)=
\rho_{s}(x_0^{m_1}y^{m_2}f_{2m_2}(a)b^{2m_2}
(y^*)^{m_2}(x_0^*)^{m_1})=
\rho_{s}(x_0^{m_1}f_{2m_2}(a+m_2i)(x_0^*)^{m_1}).
\end{align}

Now we turn to $\rho_{s_0}(z)$. From (\ref{xo2}) and (\ref{xo3}) we have $x_0y-yx_0=(i-\beta y-\beta ix_1y)x_1$. As in the proof of Theorem \ref{pqpos} we therefore obtain
\begin{align*}
\rho_{s_0}(z)=\rho_{s_0}(x_0^{m_1}y^{m_2}g_{2m_1}(b)a^{2m_1} (y^*)^{m_2}(x_0^*)^{m_1})
=\rho_{s_0}(y^{m_2}x_0^{m_1}g_{2m_1}(b)a^{2m_1} (x_0^*)^{m_1}(y^*)^{m_2})
\end{align*}
Let $g_{2m_1}(b)=\sum_{l=0}^{2m_2} \gamma_l b^l$.
From (\ref{ab}) it follows that $g_{2m_2}(b)a^{m_1}= \sum_{l=0}^{2m_2} \gamma_l
(a-l i)^{m_1} b^l$. Moreover, $xa=1 +\alpha i x$. Using these relation we derive
\begin{align}\label{g2m}
\rho_{s_1}(z)=\rho_{s_0}(y^{m_2}x_0^{m_1}g_{2m_1}(b)a^{2m_1} (x_0^*)^{m_1}(y^*)^{m_2})=\rho_{s_1}(y^{m_2}g_{2m_1}(b)(y^*)^{m_2})
\end{align}

Having (\ref{f2m}) and (\ref{g2m}) a similar reasoning as in the last part of the proof of Theorem \ref{pqpos} shows that assumption (II) implies that $\rho_s(z)>0$ and $\rho_{s_0}(z)>0$.
\end{proof}

\begin{remk}\label{irraxb}
According to a classical result due to Gelfand and Naimark \cite{GN}, the set of equivalence classes of irreducible unitary representations of the group $G$ consists of two infinite-dimensional representations $U_{\pm}$ and of  a family $U_\gamma$, $\gamma \in \dR$, of one-dimensional representations. The associated infinitesimal representations $dU_\pm$ act on the domain
\begin{align*}
\cD^\infty(U_\pm)=\{f \in C^\infty(\dR):e^{nx}f^{(m)}(x) \in L^2(\dR)~ {\rm  for~ all}~ n,m\in \dN_0\}
\end{align*}
of the Hilbert space $L^2(\dR)$ by $dU_\pm(a)f =i f^\prime$ and $dU_\pm(b)f=\pm e^x f(x)$. For $\gamma \in \dR$ we have $dU_\gamma(a)=\gamma$ and $dU_\gamma(b)=0$. Inserting these expressions into (\ref{crep2}) leads to a more explicit form of assumption (I) of Theorem \ref{abpos}. That is, (I) is equivalent to the requirements $f_0 >0$ on $\dR$ and
\begin{align*}
dU_\pm(c) = \sum_{k=0}^{2m_1} g_k(\pm e^x )i^k  \big(\frac{d}{dx}\big)^k \geq T_\pm ~ {\rm on}~ \cD^\infty(U_\pm)
\end{align*}
for some bounded selfadjoint opertors $T_\pm$ on $L^2(\dR)$ satisfying $T_\pm >0$.
\end{remk}

\bibliographystyle{amsalpha}

\end{document}